\author{Helena Kremp}
\title{Multidimensional SDE with distributional drift and Levy noise}
\newtheorem{theorem}{Theorem}[section]
\newtheorem{definition}[theorem]{Definition}     
\newtheorem{proposition}[theorem]{Proposition}
\newtheorem{lemma}[theorem]{Lemma}	
\newtheorem{corollary}[theorem]{Corollary}
\newtheorem{remark}[theorem]{Remark}
\newtheorem{assumption}[theorem]{Assumption}
\numberwithin{equation}{section}
\newcommand{\R}{\mathbb{R}}
\newcommand{\Q}{\mathbb{Q}}
\newcommand{\N}{\mathbb{N}}
\newcommand{\E}{\mathds{E}}
\newcommand{\p}{\mathds{P}}
\newcommand{\1}{\mathds{1}}
\newcommand{\F}{\mathcal{F}}
\newcommand{\La}{\mathcal{L}^{\alpha}_{\nu}}
\let\oldmathcal\mathcal
\newcommand{\calC}{\mathscr{C}}
\newcommand{\CTcalC}{C_T\calC}
\newcommand{\calK}{\oldmathcal{K}}
\newcommand{\calX}{\mathscr{X}}
\newcommand{\calD}{\mathscr{D}_{\overline{T},T}^{\theta}}
\newcommand{\calG}{\mathscr{G}}
\renewcommand{\mathcal}[1]{\mathscr{#1}}
\renewcommand{\epsilon}{\varepsilon}
\newcommand{\para}{\varolessthan}
\newcommand{\arap}{\varogreaterthan}
\newcommand{\reso}{\varodot}
\newcommand{\squeeze}[2][0]{\mbox{$\medmuskip=#1mu\displaystyle#2$}}
\DeclarePairedDelimiter{\abs}{\lvert}{\rvert}
\DeclarePairedDelimiter{\norm}{\lVert}{\rVert}
\DeclarePairedDelimiter{\brackets}{[}{]}
\DeclarePairedDelimiter{\paren}{(}{)}
\begin{document}
\begin{center}
\begin{huge}
Multidimensional SDE with distributional drift and Lévy noise\\
\end{huge}
\begin{Large}
Helena Kremp, Nicolas Perkowski\\
\end{Large}
Freie Universität Berlin
\vspace{1cm}
\hrule
\end{center}
\begin{section}*{Abstract}
We solve multidimensional SDEs with distributional drift driven by symmetric, $\alpha$-stable Lévy processes for $\alpha\in (1,2]$ by studying the associated (singular) martingale problem and by solving the Kolmogorov backward equation. We allow for drifts of regularity $(2-2\alpha)/3$, and in particular we go beyond the by now well understood ``Young regime'', where the drift must have better regularity than $(1-\alpha)/2$. The analysis of the Kolmogorov backward equation in the low regularity regime is based on paracontrolled distributions. As an application of our results we construct a Brox diffusion with L\'evy noise.\\
\textbf{Keywords:} Singular diffusions, stable Lévy noise, distributional drift, paracontrolled distributions, Brox diffusion
\end{section}
\vspace{0,5cm}
\hrule
\vspace{1cm}

\begin{section}{Introduction}
We study the weak well-posedness of L\'evy-driven stochastic differential equations with distributional drift,
\begin{equation}\label{eq:intro-sde}
dX_{t}=V(t,X_{t})dt + dL_{t},\qquad X_{0}=x\in\R^{d},
\end{equation} 
where $L$ is a non-degenerate, symmetric, $\alpha$-stable Lévy process for $\alpha\in (1,2]$, and $V(t,\cdot)$ is a distribution in the space variable for each $t\geqslant 0$.

The special case where $L$ is a Brownian motion has received lots of attention in recent years, since such \emph{singular diffusions} arise as models for stochastic processes in random media. For example, as random directed polymers~\cite{Alberts2014, Delarue2016, Caravenna2017}, self-attracting Brownian motion in a random medium~\cite{Cannizzaro2018}, or as continuum analogue of Sinai's random walk in random environment (Brox diffusion,~\cite{Brox1986}). Singular diffusions also arise as ``stochastic characteristics'' of singular SPDEs, for example the KPZ equation~\cite{Gubinelli2017KPZ} or the parabolic Anderson model~\cite{Cannizzaro2018}.

Of course, for distributional $V$ the point evaluation $V(t,X_t)$ is not meaningful, so a priori it is not clear how to even make sense of~\eqref{eq:intro-sde}. The right perspective is not to consider $V(t,X_t)$ at a fixed time, but rather to work with the integral $\int_0^t V(s,X_s) ds$. Because of small scale oscillations of $X$, which are induced by the oscillations of $L$, we only ``see an averaged version'' of $V$ and this gives rise to some regularization. At least for a Brownian motion or for a sufficiently ``wild'' L\'evy jump process, on the other hand we would not expect any regularization from a Poisson process. In the Brownian case, this intuition can be made rigorous in different ways, for example via a Zvonkin transform which removes the drift~\cite{Zvonkin1974, Veretennikov1981, Bass2001, Krylov2005, Flandoli2010, Flandoli2017}, by considering the associated martingale problem and by constructing a domain for the singular generator~\cite{Flandoli2003, Delarue2016, Cannizzaro2018}, or by Dirichlet forms~\cite{Mathieu1994}. In the one-dimensional case it is also possible to apply an It\^o-McKean construction based on space and time transformations~\cite{Brox1986}.

Here we follow the martingale problem approach, in the spirit of~\cite{Delarue2016, Cannizzaro2018} who considered the Brownian case. Formally, $X$ solves~\eqref{eq:intro-sde} if and only if it solves the martingale problem for the generator $\mathcal G^V = \partial_t - (-\Delta)^{\alpha/2} + V\cdot \nabla$, where the fractional Laplacian is the generator of $L$ (and later we will consider slightly more general $L$). That is, for all functions $u$ in the domain of $\mathcal G^V$, the process $u(t,X_t) - u(0,x) - \int_0^t \mathcal G^V u(s,X_s) ds$, $t \geqslant 0$, is a martingale. The difficulty is that the domain of $\mathcal G^V$ necessarily has trivial intersection with the smooth functions: If $u$ is smooth, then $(\partial_t - (-\Delta)^{\alpha/2})u$ is smooth as well, while for non-constant $u$ the product $V \cdot \nabla u$ is only a distribution and not a continuous function. If we want $\mathcal G^V u$ to be a continuous function, then $u$ has to be non-smooth so that $(\partial_t - (-\Delta)^{\alpha/2})u$ is also a distribution which has appropriate cancellations with $V\cdot \nabla u$ and the sum of these terms is a continuous functions.

We can find such $u$ by solving the Kolmogorov backward equation
\begin{align}\label{eq:pde}
\partial_{t}u=(-\Delta)^{\alpha/2} u-V\cdot\nabla u+f,\quad u(T,\cdot)=u^{T},
\end{align}
for suitable continuous functions $f$ and $u^T$, so that $\mathcal G^V u = f$ by construction. Given $V \in C([0,T], \mathcal C^\beta(\R^d, \R^d))$, where $\mathcal C^\beta = B_{\infty,\infty}^\beta$, the regularization obtained from $-(-\Delta)^{\alpha/2}$ suggests that $u(t,\cdot) \in \mathcal C^{\alpha+\beta}$. Therefore, $\nabla u(t,\cdot) \in \mathcal C^{\alpha+\beta-1}$, and since the product $V(t,\cdot) \cdot \nabla u(t,\cdot)$ is well posed if and only if the sum of the regularities of the factors is strictly positive, we need $\alpha+2\beta-1 >0$, or $\beta > (1-\alpha)/2$. We call this the \emph{Young regime}, in analogy with the regularity requirements that are needed for the construction of the Young integral.

There have been several results on singular L\'evy SDEs in the Young regime in recent years. Athreya, Butkovsky and Mytnik~\cite{Athreya2018} consider the time-homogeneous one-dimensional case and construct weak solutions via a Zvonkin transform, before establishing strong uniqueness and existence by a Yamada-Watanabe type argument (which in particular is restricted to $d=1$). Two nearly simultaneous works Ling and Zhao~\cite{Ling2019} respectively de Raynal and Menozzi~\cite{deRaynal2019} consider the multi-dimensional (time-homogeneous resp. time-inhomogeneous) case and they prove existence and uniqueness for the martingale problem. They even allow $V \in C([0,T], B_{p,q}^\beta)$ for general $p,q$ (under suitable conditions), but they are still restricted to the Young regime and for $p=q=\infty$ they require $\beta > (1-\alpha)/2$. Let us also mention~\cite{Harang2020Regularity} who prove pathwise regularization by noise results for SDEs driven by (very irregular) fractional L\'evy noise, based on the methods of~\cite{Catellier2016, Harang2020Cinfinity}.

Here we go beyond the Young regime and we treat the multi-dimensional time-inhomogeneous case. On the other hand, we only work with $B_{\infty,\infty}$ Besov spaces and not with $B_{p,q}$ for general $p,q$. To go beyond the Young regime we use techniques from singular SPDEs. More precisely, following the ideas of~\cite{Delarue2016, Cannizzaro2018} in the Brownian case, we use paracontrolled distributions~\cite{Gubinelli2015Paracontrolled} to solve~\eqref{eq:pde} for $\beta > (2-2\alpha)/3$ and $\alpha \in (1,2]$. The idea is to treat $u$ as a perturbation of the linearized equation with additive noise, $\partial_t w = (-\Delta)^{\alpha/2} w - V$, and to leverage this to gain some regularity. This works as long as the nonlinearity $V \cdot \nabla u$ is of lower order than the linear operator $(-\Delta)^{\alpha/2}$, i.e. if $\alpha > 1$. And indeed in that case we have $(2-2\alpha)/3 < (1-\alpha)/2$, and we can go beyond the Young regime.

Being able to go beyond the Young regime is important for our main application, the construction of a ``Brox jump diffusion'' with $\alpha$-stable L\'evy noise. Here  $d=1$ and $V$ is a (periodic) space white noise, so in particular we can only take $\beta = -1/2-\varepsilon$ for $\varepsilon>0$, which is never in the Young regime, not even in the Brownian case $\alpha=2$. We also indicate how to adapt our constructions in order to treat a non-periodic white noise. On the other hand, we do not study the qualitative behavior of the solution and we leave this for future research.

\paragraph{Structure of the paper} In \cref{sect:pre} we collect some background material on Besov spaces and $\alpha$-stable L\'evy processes, and we discuss the Schauder estimates for the fractional Laplacian. In \cref{sect:pde} we then solve the Kolmogorov backward equation. Our main theorem concerning the existence and uniqueness of a solution to the martingale problem is proven in \cref{sect:mthm},
while in \cref{sect:bd} we construct the Brox diffusion with Lévy noise.

\end{section}

\begin{section}{Preliminaries}\label{sect:pre}

In this section, we introduce some technical ingredients that we will need in the sequel.

Let $(p_{j})_{j\geqslant -1}$ be a smooth dyadic partition of unity, i.e. a family of functions $p_{j}\in C^{\infty}_{c}(\R^{d})$ for $j\geqslant -1$, such that 
\begin{enumerate}
\item[1.)]$p_{-1}$ and $p_{0}$ are non-negative radial functions (they just depend on the absolute value of $x\in\R^{d}$), such that the support of $p_{-1}$ is contained in a ball and the support of $p_{0}$ is contained in an annulus;
\item[2.)]$p_{j}(x):=p_{0}(2^{-j}x)$, $x\in\R^{d}$, $j\geqslant 0$;
\item[3.)]$\sum_{j=-1}^{\infty}p_{j}(x)=1$ for every $x\in\R^{d}$; and
\item[4.)]$\operatorname{supp}(p_{i})\cap \operatorname{supp}(p_{j})=\emptyset$ for all $\abs{i-j}>1$.
\end{enumerate}
We then define the Besov spaces 
\begin{align}\label{def:bs}
B^{\theta}_{p,q}:=\{u\in\mathcal{S}':\norm{u}_{B^{\theta}_{p,q}}=\norm[\big]{(2^{j\theta}\norm{\Delta_{j}u}_{L^{p}})_{j\geqslant -1}}_{\ell^{q}}<\infty\},
\end{align}
 where $\Delta_{j}u=\mathcal{F}^{-1}(p_{j}\mathcal{F}u)$ are the Littlewood-Paley blocks, and the Fourier transform is defined with the normalization $\hat{\varphi}(y):=\F\varphi (y):=\int_{\R^{d}}\varphi(x)e^{-2\pi i\langle x,y\rangle}dx$ (and $\F^{-1}\varphi(x)=\hat{\varphi}(-x)$); moreover, $\mathcal{S}$ are the Schwartz functions and $\mathcal S'$ are the Schwartz distributions.
For $p=q=\infty$, the space $B^{\theta}_{\infty,\infty}$ has the unpleasant property that $C^{\infty}_b \subset B^\theta_{\infty,\infty}$ is not dense.
Therefore, we rather work with the following space:
\begin{align*}
\calC^{\theta}:=\{u\in\mathcal{S}'\mid\lim_{j\to\infty}2^{j\theta}\norm{\Delta_{j}u}_{\infty}=0\}
\end{align*} equipped the norm $\norm{\cdot}_{\theta}:=\norm{\cdot}_{B^{\theta}_{\infty,\infty}}$, for which $C^\infty_b$ is a dense subset. We also write $\calC^{\theta}_{\R^{d}}=(\calC^{\theta})^{d}$ and $\calC^{\theta-}:=\cap_{\gamma<\theta}\calC^{\gamma}$.

We recall from Bony's paraproduct theory (cf. \cite[Section 2]{bahouri}) that in general the product $u v:=u\para v+u\arap v +u \reso v$ of $u\in\calC^{\theta}$ and $v\in\calC^{\beta}$ for $\theta,\beta\in\R$, is  well defined if and only if $\theta+\beta>0$. Here, we use the notation of \cite{Martin2017, Mourrat2017Dynamic} for the para- and resonant products $\para, \arap, \reso$, which satisfy the following estimates:
\begin{equation}
\begin{aligned}\label{eq:paraproduct-estimates}
\norm{u\reso v}_{\theta+\beta} & \lesssim\norm{u}_{\theta}\norm{v}_{\beta}, \qquad \text{if }\theta +\beta > 0,\\
\norm{u\para v}_{\beta} \lesssim\norm{u}_{L^{\infty}}\norm{v}_{\beta}& \lesssim\norm{u}_{\theta}\norm{v}_{\beta}, \qquad \text{if } \theta > 0,\\
\norm{u\para v}_{\beta+\theta}& \lesssim\norm{u}_{\theta}\norm{v}_{\beta}, \qquad \text{if } \theta < 0.
\end{aligned}
\end{equation}
So if $\theta + \beta > 0$ we have $\norm{u v}_{\gamma}\lesssim\norm{u}_{\theta}\norm{v}_{\beta}$ for $\gamma:=\min(\theta,\beta,\theta+\beta)$.

For $T>0$, $\rho \in (0,1)$ and for a Banach space $X$ we write $C^{\rho}_{T}X:=C^{\rho}([0,T],X)$, with
\begin{align*}
\norm{u}_{C^{\rho}_{T}X}:=\sup_{0\leqslant s<t\leqslant T}\frac{\norm{u(t)-u(s)}_{X}}{(t-s)^{\rho}}+\sup_{t\in[0,T]}\norm{u(t)}_{X}
\end{align*} and $C_{T}X:=C([0,T],X)$ with norm $\norm{u}_{C_{T}X}:=\sup_{t\in [0,T]}\norm{u(t)}_{X}$.
Analogously, we define for $\overline{T}\in (0,T]$ the space $C_{\overline{T},T}X:=C([T-\overline{T},T],X)$.

Next, we collect some facts about $\alpha$-stable Lévy processes and their generators and semigroups. A symmetric $\alpha$-stable Lévy process $L$ is a Lévy process, satisfying the scaling property $(L_{k t})_{t \geqslant 0}\stackrel{d}{=}k^{1/\alpha}(L_{t})_{t \geqslant 0}$ for any $k>0$ and $L\stackrel{d}{=}-L$, where $\stackrel{d}{=}$ denotes equality in law. These properties determine the jump measure $\mu$ of $L$, see \cite[Theorem 14.3]{sato}.  That is, the Lévy jump measure $\mu$ of $L$ is given by
\begin{align}\label{eq:mu}
\mu(A):=\E\bigg[\sum_{0\leqslant t\leqslant 1}\mathbf{1}_{A}(\Delta L_{t})\bigg]=\int_{S}\int_{\R^{+}}\mathbf{1}_{A}(k\xi)\frac{1}{k^{1+\alpha}}dk\tilde \nu(d\xi),\qquad A\in \mathcal{B}(\R^{d}\setminus\{0\}),
\end{align} where $\tilde \nu$ is a finite, symmetric, non-zero measure on the unit sphere $S\subset\R^{d}$. We also define for $A\in\mathcal{B}(\R^{d}\setminus\{0\})$ and $t\geqslant 0$
\begin{align*}
\pi(A\times [0,t])=\sum_{0\leqslant s\leqslant t}\mathbf{1}_{A}(\Delta L_{s}),
\end{align*} which is a Poisson random measure with intensity measure $dt\mu(dy)$. Let $\hat{\pi}(dr,dy)=\pi(dr,dy)-dr\mu(dy)$ be the compensated Poisson random measure of $L$.
We refer to the book by Peszat and Zabczyk for the integration theory against Poisson random measures and for the Burkholder-Davis-Gundy inequality \cite[Lemma 8.21 and 8.22]{peszat_zabczyk_2007}, which we will both use in the sequel.
The generator $A$ of $L$ satisfies $C_{b}^{\infty}(\R^{d})\subset \operatorname{dom}(A)$ and it is given by
\begin{align}\label{eq:functional}
A\varphi(x)=\int_{\R^{d}}\paren[\big]{\varphi(x+y)-\varphi(x)-\mathbf{1}_{\{\abs{y}\leqslant 1\}}(y) \nabla \varphi(x) \cdot y}\mu(dy)
\end{align} for $\varphi\in C_{b}^{\infty}(\R^{d})$. If $(P_t)_{t\geqslant 0}$ denotes the semigroup of $L$, the convergence $t^{-1}(P_{t}f(x)-f(x))\to Af(x)$ is uniform in $x\in\R^{d}$ (see \cite[Theorem 5.4]{peszat_zabczyk_2007}).

To derive Schauder estimates for $(P_t)$ it will be easier to work with another representation of the generator $A$. For that purpose we first introduce an operator $\La$ via Fourier analysis, and then we show that it agrees with $A$.

\begin{definition}\label{def:fl}
Let $\alpha \in (0,2]$ and let $\nu$ be a symmetric (i.e. $\nu(A)=\nu(-A)$), finite and non-zero measure on the unit sphere $S\subset\R^{d}$. We define the operator $\La$ as
\begin{align}
\La\F^{-1}\varphi=\F^{-1}(\psi^{\alpha}_{\nu} \varphi)\qquad\text{for $\varphi\in C^\infty_b$,}
\end{align}  where
$\psi^{\alpha}_{\nu} (z):=\int_{S}\abs{\langle z,\xi\rangle}^{\alpha}\nu(d\xi).$
\end{definition}

\begin{remark}
If we take $\nu$ as a suitable multiple of the Lebesgue measure on the sphere, then $\psi^\alpha_\nu(z) = |2\pi z|^\alpha$ and thus $\La$ is the fractional Laplace operator $-(-\Delta)^{\alpha/2}$. And if moreover $\alpha=2$, then the fractional Laplacian of course agrees with the usual Laplacian. 
\end{remark}

\begin{lemma}\label{rem:lgen}
For $\varphi \in C^\infty_b$ we have $-\La \varphi = A\varphi$, where $A$ is the generator of the symmetric, $\alpha$-stable Lévy process $L$ with characteristic exponent $\E[\exp(2\pi i\langle z,L_{t}\rangle )]=\exp(-t\psi^{\alpha}_{\nu}(z))$. The process $L$ has the jump measure $\mu$ as defined in~\cref{eq:mu}, with $\tilde \nu = C \nu$ for some $C>0$.
\end{lemma}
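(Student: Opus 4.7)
The plan is to identify $\La$ with $-A$ through Fourier analysis, after first checking that the measure $\mu$ in~\eqref{eq:mu} (with a suitable choice of $C$) really produces a Lévy process whose characteristic exponent is $\psi^\alpha_\nu$. So first I would verify the Lévy--Khintchine computation: since $\nu$ is symmetric, any purely-jump Lévy process with symmetric jump measure has characteristic exponent $\int(1-\cos(2\pi\angles{z,y}))\mu(dy)$, and using the polar factorization in~\eqref{eq:mu} together with the one-dimensional identity
\begin{equation*}
\int_0^\infty (1-\cos(2\pi k w))\,\frac{dk}{k^{1+\alpha}} = (2\pi)^\alpha c_\alpha\,\abs{w}^\alpha,\qquad c_\alpha := \int_0^\infty \frac{1-\cos u}{u^{1+\alpha}}\,du \in (0,\infty),
\end{equation*}
gives $\int(1-\cos(2\pi\angles{z,y}))\mu(dy) = C(2\pi)^\alpha c_\alpha \int_S \abs{\angles{z,\xi}}^\alpha \nu(d\xi)$. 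Choosing $C = ((2\pi)^\alpha c_\alpha)^{-1}$ makes this equal to $\psi^\alpha_\nu(z)$, and the Lévy--Khintchine theorem then provides a (unique in law) Lévy process $L$ with this characteristic exponent. Symmetry of $L$ follows from symmetry of $\nu$ (hence of $\mu$), and $\alpha$-stability from the homogeneity $\psi^\alpha_\nu(kz) = k^\alpha \psi^\alpha_\nu(z)$ for $k>0$.

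Next I would compute the generator. For $\varphi \in C^\infty_b$, the linear correction in~\eqref{eq:functional} would \emph{not} be absolutely integrable if we dropped the cutoff, but symmetry of $\mu$ lets me symmetrize and rewrite
\begin{equation*}
A\varphi(x) = \tfrac{1}{2}\int_{\R^d \setminus\{0\}} \paren[\big]{\varphi(x+y) + \varphi(x-y) - 2\varphi(x)}\,\mu(dy),
\end{equation*}
because the linear correction integrates to zero on $\{\abs{y}\leqslant 1\}$ by the symmetry $y \mapsto -y$, and the symmetrized second-difference is $O(\abs{y}^2)$ near the origin and bounded elsewhere, giving absolute convergence against $\mu$. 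Taking Fourier transforms in $\mathcal S'$ and using $\F(\varphi(\cdot + y))(z) = e^{2\pi i \angles{z,y}}\hat\varphi(z)$, Fubini on the resulting bounded integrand $1-\cos(2\pi\angles{z,y})$ yields
\begin{equation*}
\F(A\varphi)(z) = -\hat\varphi(z)\int(1-\cos(2\pi\angles{z,y}))\,\mu(dy) = -\psi^\alpha_\nu(z)\,\hat\varphi(z),
\end{equation*}
which by \cref{def:fl} is exactly $\F(-\La \varphi)(z)$, so $A\varphi = -\La \varphi$.

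The main obstacle is a minor technical one: $\varphi \in C^\infty_b$ is not Schwartz, so $\hat\varphi$ is only a tempered distribution and the Fourier manipulations above have to be interpreted distributionally. I would resolve this by testing against a Schwartz function $\phi \in \mathcal S$: applying $A\varphi$ in the pointwise sense \emph{is} legitimate since the symmetrized integral converges absolutely, and pairing with $\phi$, Fubini turns the identity into $\angles{\hat\varphi, \psi^\alpha_\nu \check\phi}$, which is well defined because $\psi^\alpha_\nu$ is a smooth (away from $0$) function with polynomial growth of order $\alpha \leqslant 2$ and $\check\phi \in \mathcal S$. Alternatively, one may regularize $\varphi$ by mollification, establish the identity for the mollified Schwartz approximants (where everything is classical), and pass to the limit using uniform convergence of $t^{-1}(P_t f - f)$ to $Af$ in~\cite[Theorem 5.4]{peszat_zabczyk_2007} together with continuity of $\La$ on $C^\infty_b$ in the appropriate topology.
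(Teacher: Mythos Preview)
Your proof is correct but proceeds along a different route than the paper. The paper's argument is shorter and more indirect: it observes that $L_t$ has density $\rho_t=\F^{-1}(e^{-t\psi^\alpha_\nu})$, writes the semigroup as $P_t\varphi=\rho_t\ast\varphi$, and then differentiates at $t=0$ on the Fourier side to obtain $A\varphi=\partial_t P_t\varphi|_{t=0}=\F^{-1}(-\psi^\alpha_\nu\hat\varphi)=-\La\varphi$. For the identification $\tilde\nu=C\nu$ of the spherical measure, the paper simply cites the proof of~\cite[Theorem~14.10]{sato} rather than carrying out the polar computation.

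By contrast, you work directly with the integro-differential representation~\eqref{eq:functional}: you symmetrize it (legitimate since $\mu$ is symmetric), compute its Fourier symbol to be $-\int(1-\cos(2\pi\langle z,y\rangle))\mu(dy)$, and separately evaluate this integral via the polar decomposition of $\mu$ and the one-dimensional scaling identity, which also yields the explicit constant $C=((2\pi)^\alpha c_\alpha)^{-1}$. What your approach buys is self-containment and an explicit value for $C$; the paper's semigroup route is quicker but tacitly uses that $e^{-t\psi^\alpha_\nu}$ is integrable (so that $\rho_t$ exists as a density), which strictly speaking relies on the non-degeneracy Assumption~\ref{ass} stated only afterwards. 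Your handling of the $\varphi\in C^\infty_b$ technicality via distributional pairing or mollification is appropriate; the paper sidesteps this by going through the semigroup, where $P_t\varphi$ is well defined as a convolution regardless.
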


\begin{proof}
	By Fourier inversion, $L_{t}$ has the density $\rho_{t}=\F^{-1}(\exp(-t\psi^{\alpha}_{\nu}))$ w.r.t. the Lebesgue measure (note that $\psi^{\alpha}_{\nu}(z)=\psi^{\alpha}_{\nu}(-z)$). So for the semigroup $(P_{t})$ of $L$ we have $P_{t}\varphi(x)=\int\rho_{t}(y)\varphi(x+y)dy$ with $\partial_{t}P_{t}\varphi_{\vert t=0}=\F^{-1}(-\psi^{\alpha}_{\nu}\hat{\varphi})=-\La \varphi$ for any $\varphi\in C^\infty_b$. The identity $\tilde \nu = C \nu$ is shown in the proof of~\cite[Theorem 14.10]{sato}.
\end{proof}

\begin{assumption}\label{ass}
Throughout the paper we assume that the measure $\nu$ from \cref{def:fl} has $d$-dimensional support, in the sense that the linear span of its support is $\R^d$. This means that the process $L$ can reach every open set in $\R^d$ with positive probability.
\end{assumption}

So far we defined $\La$ on $C^\infty_b$, so in particular on Schwartz functions. But the definition of $\La$ on Schwartz distributions by duality is problematic, because for $\alpha \in (0,2)$ the function $\psi^{\alpha}_{\nu}$ has a singularity in $0$. This motivates the next proposition.

\begin{proposition}\textbf{(Continuity of the operator $\La$)}\label{prop:contfl}\\
Let $\alpha\in (0,2]$. Then for $\beta\in\R$ and $u \in C^\infty_b$ we have
\begin{align*}
\norm{\La u}_{\beta-\alpha}\lesssim\norm{u}_{\beta}.
\end{align*}
In particular, $\La$ can be uniquely extended to a continuous operator from $\calC^{\beta}$ to $\calC^{\beta-\alpha}$.
\end{proposition}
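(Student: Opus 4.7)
My plan is to bound each Littlewood--Paley block $\Delta_{j}\La u$ separately, exploiting that $\La$ and $\Delta_{j}$ commute as Fourier multipliers. For $u\in C^{\infty}_{b}$ and each $j\geqslant -1$ I will establish $\norm{\Delta_{j}\La u}_{\infty}\lesssim 2^{j\alpha}\norm{\Delta_{j}u}_{\infty}$ (with the trivial factor $2^{-\alpha}$ for $j=-1$); multiplying by $2^{j(\beta-\alpha)}$ and taking the supremum in $j$ then gives $\norm{\La u}_{\beta-\alpha}\lesssim\norm{u}_{\beta}$. Because the bound is pointwise in $j$, the decay $2^{j\beta}\norm{\Delta_{j}u}_{\infty}\to 0$ that characterises $\calC^{\beta}$ is transferred to $\La u$, and by density of $C^{\infty}_{b}\subset\calC^{\beta}$ (stated just before the proposition) we obtain a unique continuous extension $\La\colon\calC^{\beta}\to\calC^{\beta-\alpha}$.

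For the high-frequency blocks $j\geqslant 0$ I use the $\alpha$-homogeneity $\psi^{\alpha}_{\nu}(\lambda z)=\lambda^{\alpha}\psi^{\alpha}_{\nu}(z)$. Choose $\tilde p_{0}\in C^{\infty}_{c}(\R^{d})$ supported in an annulus bounded away from the origin, with $\tilde p_{0}\equiv 1$ on $\supp(p_{0})$, and set $\tilde p_{j}(z):=\tilde p_{0}(2^{-j}z)$. Since $\Delta_{j}u$ has Fourier support inside $\{\tilde p_{j}=1\}$,
\begin{align*}
\Delta_{j}\La u=\La\Delta_{j}u=K_{j}\ast\Delta_{j}u,\qquad K_{j}:=\F^{-1}(\tilde p_{j}\psi^{\alpha}_{\nu}),
\end{align*}
and a rescaling argument using the $\alpha$-homogeneity of $\psi^{\alpha}_{\nu}$ yields $\norm{K_{j}}_{L^{1}}=2^{j\alpha}\norm{\F^{-1}(\tilde p_{0}\psi^{\alpha}_{\nu})}_{L^{1}}$. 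The right-hand factor is finite because $\tilde p_{0}$ excises the only singular point (the origin) of $\psi^{\alpha}_{\nu}$, so that $\tilde p_{0}\psi^{\alpha}_{\nu}$ is smooth and compactly supported, hence Schwartz. Young's convolution inequality then delivers the claimed blockwise estimate.

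The low-frequency block $j=-1$ is the delicate step and the main obstacle, since for $\alpha<2$ the symbol $\psi^{\alpha}_{\nu}$ fails to be smooth at the origin and the Fourier-multiplier argument above no longer applies. I would instead invoke \cref{rem:lgen}, which identifies $\La$ with the negative Lévy generator and so provides the pointwise representation
\begin{align*}
-\La\Delta_{-1}u(x)=\int_{\R^{d}}\paren[\big]{\Delta_{-1}u(x+y)-\Delta_{-1}u(x)-\1_{\{\abs{y}\leqslant 1\}}\nabla\Delta_{-1}u(x)\cdot y}\mu(dy).
\end{align*}
Bounding the integrand by $\tfrac{1}{2}\abs{y}^{2}\norm{\nabla^{2}\Delta_{-1}u}_{\infty}$ on $\{\abs{y}\leqslant 1\}$ and by $2\norm{\Delta_{-1}u}_{\infty}$ on $\{\abs{y}>1\}$, the Lévy integrability $\int(\abs{y}^{2}\wedge 1)\mu(dy)<\infty$ together with Bernstein's inequality $\norm{\nabla^{2}\Delta_{-1}u}_{\infty}\lesssim\norm{\Delta_{-1}u}_{\infty}$ (applicable because $\Delta_{-1}u$ is band-limited) produce $\norm{\Delta_{-1}\La u}_{\infty}\lesssim\norm{\Delta_{-1}u}_{\infty}\lesssim\norm{u}_{\beta}$. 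Combining the two frequency regimes and extending by density completes the proof.
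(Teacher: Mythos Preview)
Your proof is correct and follows essentially the same approach as the paper. For the high-frequency blocks $j\geqslant 0$ you spell out by hand (via the cutoff $\tilde p_{0}$, the $\alpha$-homogeneity of $\psi^{\alpha}_{\nu}$, and Young's inequality) exactly the Fourier multiplier estimate that the paper obtains by citing \cite[Lemma~2.2]{bahouri}; for the low-frequency block $j=-1$ your argument coincides with the paper's, using the L\'evy-generator representation from \cref{rem:lgen}, the Taylor bound on small jumps, the trivial bound on large jumps, and Bernstein's inequality.
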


\begin{proof}
For $j \geqslant 0$ it follows from~\cite[Lemma~2.2]{bahouri},  that $\| \La \Delta_j u\|_{L^\infty} \lesssim 2^{-j(\beta-\alpha)} \|u \|_\beta$, as $\psi^{\alpha}_{\nu}$ is infinitly often continously differentiable in $\R^{d}\setminus\{0\}$ with $\abs{\partial^{\mu}\psi^{\alpha}_{\nu}(z)}\lesssim\abs{z}^{\alpha-\abs{\mu}}$ for a multiindex $\mu\in\N_{0}^{d}$ with $\abs{\mu}\leqslant\alpha$ and $\Delta_{j}u$ has a Fourier transform, which is supported in $2^{j}\cal{A}$, where $\cal{A}$ is the annulus, where $\rho_{0}$ is supported. For $j=-1$ we use that $-\La = A$ for $A$ as in~\cref{eq:functional}, and therefore
\begin{align*}
	-\La \Delta_{-1} u(x) & =  \int_{\R^d}\left(\Delta_{-1} u(x+y) - \Delta_{-1} u(x) - \nabla \Delta_{-1} u(x)\cdot y \1_{\{|y| \leqslant 1\}} \right) \mu(dy) \\
	& \lesssim \int_{B(0,1)} \|D^2\Delta_{-1} u\|_{L^\infty}|y|^2 \mu(dy) + \|\Delta_{-1} u\|_{L^\infty} \mu(B(0,1)^c) \lesssim \| u \|_{\alpha},
\end{align*}
where $B(0,1) = \{|y| \leqslant 1\}$ and the last step follows from the Bernstein inequality in~\cite[Lemma~2.1]{bahouri}.
\end{proof}
\begin{remark}
One can show that the operators $A$ and $-\La$ even agree on $\bigcup_{\varepsilon>0}\calC^{2+\epsilon}$. Indeed, for $\varphi \in \bigcup_{\varepsilon>0}\calC^{2+\epsilon}$ we have that $\varphi$ and its partial derivatives up to order $2$ are uniformly continuous, and thus it follows from \cite[Theorem~5.4]{peszat_zabczyk_2007} that $A\varphi$ has the same expression as in~\eqref{eq:functional}. Then we can use that $C^{\infty}_{b}$ is dense in $\calC^{2+\epsilon}$ for all $\varepsilon>0$ and apply a continuity argument to deduce that $A\varphi = -\La \varphi$ for $\varphi \in \bigcup_{\varepsilon>0}\calC^{2+\epsilon}$.
\end{remark}

For $z \in \R^d \setminus\{0\}$ we also have
\[
	\psi^{\alpha}_{\nu}(z)= |z|^\alpha \int_{S} \big|\big\langle \frac{z}{|z|},\xi\big\rangle\big|^\alpha \nu(d\xi) \geqslant |z|^\alpha \min_{|y|=1} \int_{S} |\langle y,\xi\rangle|^\alpha \nu(d\xi),
\]
and by~\cref{ass} the minimum on the right hand side is strictly positive. Otherwise, there would be some $y_0\neq 0$ with $\int_S |\langle y_0,\xi\rangle|^\alpha \nu(d\xi) = 0$ and this would mean that the support of $\nu$ (and thus also its span) is contained in the orthogonal complement of $\operatorname{span}(y_0)$. Therefore, $e^{-\psi^\alpha_\nu}$ decays faster than any polynomial at infinity and outside of $0$ it even behaves like a Schwartz function.

\begin{lemma}\label{schauder}
Let $\nu$ be a finite, symmetric measure on the sphere $S\subset\R^{d}$ satisfying \cref{ass}. Let $P_{t}\varphi:=\mathcal{F}^{-1}(e^{-t\psi^{\alpha}_{\nu}}\hat{\varphi}) = \rho_t \ast \varphi$, where $t > 0$, $\rho_t = \mathcal F^{-1} e^{-t\psi^\alpha_\nu} \in L^1$, and $\varphi\in C^\infty_b$. Then we have for $\vartheta\geqslant 0$, $\beta\in\R$
\begin{align}\label{eq:schauder1}
\norm{P_{t}\varphi}_{\beta+\vartheta}\lesssim (t^{-\vartheta/\alpha} \vee 1) \norm{\varphi}_{\beta},
\end{align} and for $\vartheta\in [0,\alpha)$
\begin{align}
\norm{(P_{t}-\operatorname{Id})\varphi}_{\beta-\vartheta}\lesssim t^{\vartheta/\alpha}\norm{\varphi}_{\beta}.\label{eq:schauder2}
\end{align}
Therefore, if $\vartheta\geqslant 0$, then $P_{t}$ has a unique extension to a bounded linear operator in $L(\mathcal{C}^{\beta},\mathcal{C}^{\beta+\vartheta})$ and this extension satisfies the same bounds. 
\end{lemma}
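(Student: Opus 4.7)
The approach is a Fourier--multiplier argument via the Littlewood--Paley decomposition. The crucial input, as already noted just before the lemma, is the lower bound $\psi^\alpha_\nu(\xi) \geqslant c|\xi|^\alpha$ provided by the Assumption, which makes $e^{-t\psi^\alpha_\nu}$ decay like a stretched Gaussian at high frequencies and is what supplies all the smoothing.

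For the first estimate I would treat $j \geqslant 0$ and $j = -1$ separately. For $j \geqslant 0$, using that $\tilde p_j := p_{j-1} + p_j + p_{j+1}$ equals $1$ on $\supp p_j$, one writes $\Delta_j P_t \varphi = K_{j,t} * \Delta_j \varphi$ with $K_{j,t} := \F^{-1}(\tilde p_j \, e^{-t\psi^\alpha_\nu})$, and Young's convolution inequality reduces the task to bounding $\|K_{j,t}\|_{L^1}$. The change of variables $\xi \mapsto 2^j \eta$ gives $\|K_{j,t}\|_{L^1} = \|\F^{-1}(q \, e^{-t 2^{j\alpha}\psi^\alpha_\nu})\|_{L^1}$ for a fixed smooth $q$ supported in a fixed annulus around the origin. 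On that annulus, $\psi^\alpha_\nu$ is smooth, bounded above, and bounded below by a positive constant (by the Assumption). By Faà di Bruno, derivatives of $e^{-s\psi^\alpha_\nu}$ are polynomial in $s$ times $e^{-cs}$, and integration by parts in the Fourier integral yields $|\F^{-1}(q\, e^{-s\psi^\alpha_\nu})(x)| \lesssim (1+|x|)^{-N} Q_N(s) e^{-cs}$ for a polynomial $Q_N$ and every $N$, so $\|K_{j,t}\|_{L^1} \lesssim e^{-c' t 2^{j\alpha}}$. Hence $2^{j(\beta+\vartheta)}\|\Delta_j P_t \varphi\|_{L^\infty} \lesssim 2^{j\vartheta} e^{-c' t 2^{j\alpha}} \|\varphi\|_\beta$, and the supremum over $j \geqslant 0$ is controlled by $\sup_{y \geqslant 0} y^{\vartheta/\alpha} e^{-c'ty} \lesssim t^{-\vartheta/\alpha}$. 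For $j = -1$, since $\rho_t$ is a probability density, $\|\Delta_{-1} P_t \varphi\|_{L^\infty} \leqslant \|\Delta_{-1}\varphi\|_{L^\infty} \lesssim \|\varphi\|_\beta$; this provides the $\vee\, 1$ and also handles large $t$.

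For the second estimate the same scheme works with $e^{-t\psi^\alpha_\nu}$ replaced by $e^{-t\psi^\alpha_\nu} - 1$. Writing $e^{-s\psi^\alpha_\nu} - 1 = -\int_0^s \psi^\alpha_\nu e^{-r\psi^\alpha_\nu}\,dr$ and applying the above kernel argument to $\F^{-1}(q\, \psi^\alpha_\nu e^{-r\psi^\alpha_\nu})$ (which has uniformly bounded $L^1$ norm in $r$) gives $\|\F^{-1}(q(e^{-s\psi^\alpha_\nu} - 1))\|_{L^1} \lesssim s$; combined with the trivial $\lesssim 1$ and the elementary $\min(s,1) \leqslant s^{\vartheta/\alpha}$ for $\vartheta/\alpha \in [0,1]$, this gives $\|\tilde K_{j,t}\|_{L^1} \lesssim (t 2^{j\alpha})^{\vartheta/\alpha}$, and hence $2^{j(\beta-\vartheta)}\|\Delta_j(P_t - \operatorname{Id})\varphi\|_{L^\infty} \lesssim t^{\vartheta/\alpha}\|\varphi\|_\beta$ for $j \geqslant 0$. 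For $j = -1$, the identity $(P_t - \operatorname{Id})\Delta_{-1}\varphi = -\int_0^t P_s \La \Delta_{-1}\varphi\,ds$, together with the $L^\infty$-contractivity of $(P_s)$ and the Bernstein-type bound $\|\La \Delta_{-1}\varphi\|_{L^\infty} \lesssim \|\varphi\|_\beta$ (which follows from the preceding Proposition applied to $\Delta_{-1}\varphi$, whose Fourier support lies in a fixed ball), yields $\lesssim t\|\varphi\|_\beta$; interpolating with the trivial bound gives $\min(t,1)\|\varphi\|_\beta \lesssim t^{\vartheta/\alpha}\|\varphi\|_\beta$. Finally, both bounds extend to all of $\calC^\beta$ by density of $C^\infty_b$.

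The main technical point is the kernel bound $\|\F^{-1}(q \, e^{-s\psi^\alpha_\nu})\|_{L^1} \lesssim e^{-cs}$: this is where the non-degeneracy Assumption enters essentially (without it, $\psi^\alpha_\nu$ could vanish on part of the annulus and one would only gain polynomial decay), and where one has to check that the polynomial-in-$s$ factors coming from Faà di Bruno are absorbed by $e^{-cs}$ uniformly. Everything else is bookkeeping, and the restriction $\vartheta < \alpha$ in the second estimate comes from the interpolation step $\min(s,1) \leqslant s^{\vartheta/\alpha}$ together with the $j=-1$ Bernstein bound.
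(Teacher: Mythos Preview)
Your argument is correct and spells out exactly the Littlewood--Paley/Fourier-multiplier computation behind \cite[Lemma~A.5]{Gubinelli2015Paracontrolled}, which is all the paper's proof invokes. One word of caution: you assert that $\psi^\alpha_\nu$ is smooth on the annulus, which is true for the fractional Laplacian but can fail for general atomic $\nu$ when $\alpha<2$ (e.g.\ $\nu=\sum_i\delta_{\pm e_i}$ gives $\psi^\alpha_\nu(z)=2\sum_i|z_i|^\alpha\notin C^2(\R^d\setminus\{0\})$), so the repeated integration by parts does not literally go through in that generality---the paper's own exposition glosses over the same point (compare the proof of Proposition~\ref{prop:contfl}), and the estimates do hold, but strictly speaking one needs a slightly more robust argument there.
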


\begin{proof}
	This follows from~\cite[Lemma~A.5]{Gubinelli2015Paracontrolled}, see also \cite[Lemma~A.7, Lemma~A.8]{Gubinelli2015Paracontrolled}.
\end{proof}

\begin{corollary}\textbf{(Schauder Estimates)}\label{cor:schauder}\\
Let $(P_{t})$ and $\nu$ be as in \cref{schauder}. Let $T>0$, $\overline{T}\in (0,T]$, and $\beta\in \R$. For $v\in C_{\overline{T},T}\calC^{\beta}$ and $t\in[T-\overline{T},T]$ we define $J^{T}v (t):=\int_{t}^{T}P_{r-t}v(r)dr$. Then we have for $\vartheta\in [0,\alpha]$
\begin{align}\label{j1}
\norm{J^{T}v}_{C_{\overline{T},T}\calC^{\beta+\vartheta}}\lesssim \overline{T}^{1-\vartheta/\alpha} \norm{v}_{C_{\overline{T},T}\calC^{\beta}}.
\end{align}
If moreover $\beta<0$ and $\vartheta \in(-\beta,\alpha)$, then
\begin{align}\label{j2}
\norm{J^{T}v}_{C^{(\beta +\vartheta)/\alpha}_{\overline{T},T}L^{\infty}}\lesssim \overline{T}^{1-\vartheta/\alpha} \norm{v}_{C_{\overline{T},T}\calC^{\beta}}.
\end{align} 
\end{corollary}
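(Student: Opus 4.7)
The plan is to reduce both estimates \eqref{j1} and \eqref{j2} to Littlewood--Paley block estimates for the semigroup, in the same spirit as \cite[Lemma~A.7, A.8]{Gubinelli2015Paracontrolled}. The key ingredient is the strengthened heat-kernel bound
\[
\|P_t \Delta_j u\|_{L^\infty} \lesssim e^{-c\, t\, 2^{j\alpha}} \|\Delta_j u\|_{L^\infty},\qquad j\geqslant 0,
\]
which follows from $P_t \Delta_j = \mathcal F^{-1}(e^{-t\psi^\alpha_\nu} p_j \cdot)$ together with the lower bound $\psi^\alpha_\nu(z)\gtrsim |z|^\alpha$ on the annular support of $p_j$, established just before the lemma. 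The block $j=-1$ is handled by the general $\calC^\beta\!\to\!\calC^\beta$ continuity of $P_t$.

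For \eqref{j1}, I would commute $\Delta_j$ past $P_{r-t}$ in $J^Tv(t)=\int_t^T P_{r-t}v(r)\,dr$ and integrate the exponential to get
\[
\|\Delta_j J^Tv(t)\|_{L^\infty}\lesssim 2^{-j\beta}\|v\|_{C_{\overline T,T}\calC^\beta}\,\min(\overline T,\,2^{-j\alpha}).
\]
The interpolation inequality $\min(a,b)\leqslant a^{1-\vartheta/\alpha}b^{\vartheta/\alpha}$ applied with $a=\overline T$, $b=2^{-j\alpha}$ then produces the factor $\overline T^{1-\vartheta/\alpha}2^{-j\vartheta}$, so taking the supremum over $j$ yields \eqref{j1} for the full range $\vartheta\in[0,\alpha]$ (including the endpoint, where direct integration of $(r-t)^{-1}$ would fail).

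For \eqref{j2} I would split the increment using the semigroup identity $P_{r-t_1}=P_{t_2-t_1}P_{r-t_2}$:
\[
J^T v(t_2)-J^T v(t_1)=-\int_{t_1}^{t_2}P_{r-t_1}v(r)\,dr+\int_{t_2}^T(\operatorname{Id}-P_{t_2-t_1})P_{r-t_2}v(r)\,dr.
\]
On each block I use the extra bound $\|(\operatorname{Id}-P_h)\Delta_j u\|_{L^\infty}\lesssim \min(1,h\,2^{j\alpha})\|\Delta_j u\|_{L^\infty}$, coming from writing $\operatorname{Id}-P_h=\int_0^h \La P_\tau\,d\tau$ and using $\|\La \Delta_j u\|_{L^\infty}\lesssim 2^{j\alpha}\|\Delta_j u\|_{L^\infty}$ from \cref{prop:contfl}. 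Combining with the exponential decay gives, with $h=t_2-t_1$,
\[
\|\Delta_j(J^T v(t_2)-J^T v(t_1))\|_{L^\infty}\lesssim 2^{-j\beta}\|v\|_{C_{\overline T,T}\calC^\beta}\bigl[\min(h,2^{-j\alpha})+\min(1,h\,2^{j\alpha})\min(\overline T,2^{-j\alpha})\bigr].
\]
I would then sum over $j$ by splitting according to the two thresholds $2^{j\alpha}\sim h^{-1}$ and $2^{j\alpha}\sim\overline T^{-1}$. In each regime one of the terms in the bracket dominates, and the geometric series converges thanks to the assumption $\vartheta>-\beta$, i.e.\ $\beta+\vartheta>0$; summing produces exactly $\overline T^{1-\vartheta/\alpha} h^{(\beta+\vartheta)/\alpha}$ up to a constant (using $h\leqslant\overline T$ to trade extra powers of $h$ for $\overline T$). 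The sup-in-time part of the $C^{(\beta+\vartheta)/\alpha}_{\overline T,T}L^\infty$ norm is directly controlled by \eqref{j1}, since $\beta+\vartheta>0$ gives $\calC^{\beta+\vartheta}\hookrightarrow L^\infty$.

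The only delicate step is the case analysis in the dyadic sum for \eqref{j2}: one has to check that in each of the three regions $j\leqslant k_0<j_0$, $k_0<j\leqslant j_0$, $j>j_0$ (where $2^{-k_0\alpha}\sim\overline T$, $2^{-j_0\alpha}\sim h$), the partial sums are bounded by $\overline T^{1-\vartheta/\alpha}h^{(\beta+\vartheta)/\alpha}$; the restrictions $\vartheta<\alpha$ and $\beta+\vartheta>0$ are exactly what makes the geometric sums converge at both ends, and the inequality $h\leqslant\overline T$ is essential to convert intermediate factors $h^{1-(\beta+\vartheta)/\alpha}$ into $\overline T^{1-(\beta+\vartheta)/\alpha}$.
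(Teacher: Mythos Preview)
Your proposal is correct and is essentially a detailed unpacking of the argument the paper invokes by reference: the paper simply cites \cite[Lemma~A.9]{Gubinelli2015Paracontrolled} for the endpoint $\vartheta=\alpha$ and observes that $\vartheta<\alpha$ follows from \cref{schauder} via $\int_t^T (r-t)^{-\vartheta/\alpha}\,dr\lesssim\overline T^{1-\vartheta/\alpha}$, while you spell out the underlying Littlewood--Paley block estimates (exponential semigroup decay, the interpolation $\min(a,b)\leqslant a^{1-\vartheta/\alpha}b^{\vartheta/\alpha}$, and the increment decomposition for \eqref{j2}) that constitute that cited lemma. The approaches coincide; yours has the minor advantage of treating all $\vartheta\in[0,\alpha]$ uniformly for \eqref{j1} rather than separating the endpoint.
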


\begin{proof}
	This follows from the same arguments as~\cite[Lemma A.9]{Gubinelli2015Paracontrolled}. In that lemma only the most difficult case $\vartheta = \alpha$ is treated, but the case $\vartheta < \alpha$ follows directly from \cref{schauder} since then $\int_t^T (r-t)^{-\vartheta/\alpha}dr \simeq (T-t)^{1-\vartheta/\alpha} \leqslant \overline T^{1-\vartheta/\alpha}$.
\end{proof}

\end{section}

\begin{section}{The Kolmogorov backward equation}\label{sect:pde}

Our goal is to define and construct weak solutions (or better: martingale solutions) to the Lévy SDE
\begin{align} \label{eq:lsde}
dX_{t}=V(t,X_{t})dt + dL_{t},\quad X_{0}=x\in\R^{d},
\end{align}
 where $L$ is a $d$-dimensional, symmetric, $\alpha$-stable Lévy process for $\alpha\in (1,2]$, and where $V\in C_{T}\mathcal{C}^{\beta}_{\R^{d}}:=C([0,T],(\calC^{\beta})^{d})$, for $\beta<0$. For that purpose we follow~\cite{Delarue2016, Cannizzaro2018} in formulating the martingale problem for $X$, which is based on the operator
\begin{align}
\mathcal{G}^{V}:=\partial_{t} + \La+V\cdot\nabla,
\end{align} where the $\La$ is the generator of $L$ (see \cref{def:fl} and \cref{rem:lgen}). To make sense of the martingale problem, we have to solve the Kolmogorov backward equation
\begin{align}\label{eq:kolmogorov-backward}
\mathcal{G}^{V}u=f,\quad u(T,\cdot)=u^{T}\quad\Leftrightarrow\quad \partial_{t}u=-\La u-V\cdot\nabla u+f,\quad u(T,\cdot)=u^{T},
\end{align}
for $f\in C_{T}\calC^{\epsilon}$ for $\epsilon>0$, and $u^{T}\in\calC^{2+\beta}$. Here we need sufficient regularity of $V$: Since at best $V\cdot\nabla u \in \calC^\beta$ and since inverting $\partial_t + \La$ gains $\alpha$ degrees of regularity, we expect that $u\in C_T  \calC^{\alpha+\beta}$. Thus, we need $\beta + (\alpha+\beta-1)>0$ in order for $V \cdot \nabla u$ to be well defined, i.e. $\beta>\frac{1-\alpha}{2}$ (we call this the \emph{Young case}, by analogy to the Young integral). To allow for more irregular $V$ we follow~\cite{Cannizzaro2018} in using paracontrolled distributions~\cite{Gubinelli2015Paracontrolled}. Then we need to postulate the existence of certain resonant products of $V$, and under that assumption we obtain the existence and uniqueness of a paracontrolled solution $u$ for $\beta>\frac{2-2\alpha}{3}$.

The solution theory for~\cref{eq:kolmogorov-backward} is similar to the Brownian case, where $-\La$ is replaced by the Laplacian, and which is treated in~\cite{Cannizzaro2018}. For completeness we include the proofs, but readers familiar with~\cite{Cannizzaro2018} could skip most of this section and only have a look at~\cref{thm:ygeneq},~\cref{def:roughdist},~\cref{prop:Banachmap} and ~\cref{thm:rgeneq}, where \cref{thm:rgeneq} carries out the arguments for proving the continuity of the solution map in the rough case.

Let us start with the Young case. We call $u$  a mild solution to~\cref{eq:kolmogorov-backward} if
\begin{align*}
u_{t}=P_{T-t}u^{T}+\int_{t}^{T}P_{s-t}(V_{s}\cdot\nabla u_{s}-f_{s})ds=:P_{T-t}u^{T}+J^{T}(V\cdot\nabla u-f)(t),
\end{align*} for $t\in[0,T]$, where $(P_{t})$ is the semigroup generated by $-\La$, as defined in \cref{schauder}.
\begin{theorem}\label{thm:ygeneq}
Let $\alpha\in (1,2]$, $\beta\in(\frac{1-\alpha}{2}, 0)$ and $\theta\in (1-\beta,\beta+\alpha)$. Let $V\in\CTcalC^{\beta}_{\R^{d}}$, $f\in\CTcalC^{\beta}$ and $u^{T}\in \calC^{\theta}$. Then the PDE 
\begin{align}\label{eq:pdem}
\partial_{t}u=\La u-V\cdot \nabla u+f,\quad u(T,\cdot)=u^{T},
\end{align}
admits a unique mild solution $u\in\CTcalC^{\theta}\cap C_{T}^{\theta/\alpha}L^{\infty}$. Moreover, the solution map 
\[
	\calC^{\theta} \times \CTcalC^{\beta} \times \CTcalC^{\beta}_{\R^{d}} \ni (u^{T},f,V)\mapsto u \in \CTcalC^{\theta} \cap C^{\theta/\alpha}_{T}L^{\infty}.
\]
is continuous.
\end{theorem}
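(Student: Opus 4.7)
The strategy is a standard Banach fixed-point argument for the mild formulation. Define the Picard map $\Phi\colon C_{\overline T,T}\calC^{\theta} \to C_{\overline T,T}\calC^{\theta}$ by
\[
\Phi(u)(t) := P_{T-t}u^{T} + J^{T}(V\cdot\nabla u - f)(t),\qquad t\in[T-\overline T,T],
\]
for some $\overline T \in (0,T]$ to be chosen. I first check that the right-hand side is well defined. Since $\beta<0<\theta-1$ and $\beta + (\theta-1) > 0$ by the assumption $\theta > 1-\beta$, the paraproduct estimates in~\eqref{eq:paraproduct-estimates} give
\[
\|V(s)\cdot\nabla u(s)\|_{\beta} \lesssim \|V(s)\|_{\beta}\,\|u(s)\|_{\theta},
\]
so $V\cdot\nabla u \in C_{\overline T,T}\calC^{\beta}$.

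The next step is to apply the Schauder estimates from \cref{cor:schauder} with $\vartheta = \theta - \beta$, which is admissible because $\theta < \beta + \alpha$ forces $\vartheta < \alpha$, and $\vartheta > -\beta$ follows from $\theta > 0$ (which is implied by $\theta > 1-\beta > 1$). Together with the trivial bound $\|P_{T-t}u^{T}\|_{\theta}\leqslant \|u^{T}\|_{\theta}$ coming from \cref{schauder}, this yields
\[
\|\Phi(u) - \Phi(v)\|_{C_{\overline T,T}\calC^{\theta}} \lesssim \overline T^{\,1-(\theta-\beta)/\alpha}\,\|V\|_{C_{T}\calC^{\beta}_{\R^{d}}}\,\|u-v\|_{C_{\overline T,T}\calC^{\theta}}.
\]
Because $1-(\theta-\beta)/\alpha>0$, we can choose $\overline T$ small enough (depending only on $\|V\|_{\CTcalC^\beta_{\R^d}}$) to make $\Phi$ a contraction, obtaining a unique fixed point on $[T-\overline T,T]$. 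Iterating this construction on intervals $[T-2\overline T,T-\overline T]$, $[T-3\overline T,T-2\overline T]$, etc., with the running terminal data being the value of $u$ already produced at the right endpoint, gives a unique mild solution on the full interval $[0,T]$; the step size $\overline T$ stays constant across iterations because the Lipschitz constant of the right-hand side depends only on $\|V\|_{\CTcalC^\beta_{\R^d}}$.

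For the additional regularity $u \in C_{T}^{\theta/\alpha}L^{\infty}$, the drift term is handled by estimate~\eqref{j2} in \cref{cor:schauder} (again with $\vartheta = \theta-\beta$, which lies in $(-\beta,\alpha)$), while for the semigroup piece I use \cref{schauder}, writing for $s<t$
\[
P_{T-s}u^{T} - P_{T-t}u^{T} = P_{T-t}\bigl((P_{t-s}-\operatorname{Id})u^{T}\bigr)
\]
and bounding the right-hand side in $L^\infty$ by $(t-s)^{\theta/\alpha}\|u^{T}\|_\theta$ via~\eqref{eq:schauder2}. Continuity of the solution map follows from linearity of the equation in $(u^{T},f)$ together with a direct inspection of the dependence on $V$: the a priori estimate derived above gives, after absorbing one factor $\|u\|_{C_{\overline T,T}\calC^{\theta}}$, a linear bound on the solution in terms of the data, and the continuity in $V$ reduces to continuity of the bilinear map $(V,u)\mapsto V\cdot\nabla u$ between the relevant Besov spaces, which is again a consequence of~\eqref{eq:paraproduct-estimates}.

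The only delicate point is ensuring that the local-in-time contraction constant does not blow up as we iterate; this is fine here because the Lipschitz constant on each slab $[T-k\overline T,T-(k-1)\overline T]$ is determined solely by $\|V\|_{\CTcalC^\beta_{\R^d}}$ and not by the previously constructed pieces. Uniqueness on $[0,T]$ then follows by a standard patching argument: two mild solutions must agree on each slab by the local uniqueness given by the contraction.
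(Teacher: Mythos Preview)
Your proposal is correct and follows essentially the same approach as the paper: a Banach fixed-point argument for the mild formulation on short time intervals, using the paraproduct estimates~\eqref{eq:paraproduct-estimates} to control $V\cdot\nabla u$ in $\calC^{\beta}$ and the Schauder estimates of \cref{schauder} and \cref{cor:schauder} to gain the factor $\overline T^{1-(\theta-\beta)/\alpha}$, followed by iteration. The only cosmetic difference is that the paper runs the contraction directly on $C_{\overline T,T}\calC^{\theta}\cap C_{\overline T,T}^{\theta/\alpha}L^{\infty}$, whereas you contract in $C_{\overline T,T}\calC^{\theta}$ first and then read off the time regularity a posteriori; for the continuity of the solution map the paper invokes Gronwall's inequality where you argue via bilinearity of $(V,u)\mapsto V\cdot\nabla u$, but both arguments are standard and equivalent here.
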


\begin{proof}
The proof follows from the Banach fixed point theorem applied to the map
\begin{align*}
\Phi^{\overline T, T} u (t)=P_{T-t}u^{T}+J^{T}(\nabla u\cdot V-f)(t),
\end{align*} where $J^{T}(v)(t)=\int_{t}^{T}P_{r-t}v(r)dr$. Replacing first the interval $[0,T]$ by $[T-\overline{T},T]$ for $\overline{T}\in (0,T]$ suffitiently small, the estimates for $P$ and $J$ from~\cref{schauder} and~\cref{cor:schauder} together with the estimates for the product in~\eqref{eq:paraproduct-estimates} show that if $\overline T$ is sufficiently small, then $\Phi^{\overline T, T}$ is a contraction on $C_{\overline T,T} \calC^{\theta} \cap C^{\theta/\alpha}_{\overline T,T}L^{\infty}$. Moreover, $\overline T$ does not depend on the terminal condition $u^T$ and therefore we can iterate this construction and patch the solutions together to obtain a solution on $[0,T]$.

The continuity of the solution map follows from the linearity of the equation and from Gronwall's inequality for locally finite measures, cf. \cite[Appendix, Theorem 5.1]{ethier}.
\end{proof}

Our next aim is to go beyond the Young case. If $\beta\leqslant \frac{1-\alpha}{2}$, then the sum  of the regularities of $\nabla u$ and $V$ is negative ($\theta - 1 + \beta < \beta+\alpha-1+\beta\leqslant 0$), and therefore the resonant product $\nabla u \reso V$ is ill defined. To overcome this problem, we use the \textit{paracontrolled ansatz}
\begin{align}
u&=
\nabla u\para J^{T}(V)+u^{\sharp},\label{eq:pansatz}
\end{align}
where the paraproduct is defined as $\nabla u\para J^{T}(V) = \sum_{j=1}^d \partial_j u \para J^T V^j$, and where $u^\sharp$ will be more regular than $u$.

\begin{remark}\label{rem:intuition} The intuition behind the paracontrolled ansatz is as follows. Assume that we found a solution $u\in C_{T}^{\theta/\alpha}L^{\infty}\cap C_{T}\calC^{\theta}$  for $\theta=\beta+\alpha-\epsilon$ for some (small) $\epsilon>0$,
and that we can make sense of the resonant product $\nabla u\reso V$ in such a way that it has its natural regularity $C_{T}\calC^{\beta+\theta-1}$, despite the fact that $\beta+\theta-1\leqslant 0$. Then we would get that
\begin{align*}
u^{\sharp}:&=u-\nabla u\para J^{T}(V)\\&=P_{T-\cdot}u^{T}-J^{T}(f)+J^{T}(\nabla u\arap V)+J^{T}(\nabla u\reso V)+(J^{T}(\nabla u\para V)-\nabla u\para J^{T}(V))
\end{align*} is more regular than $u$ (in fact $2\theta-1$ regular in space, if $u^{T}\in C_{T}\calC^{2\theta-1}$ and $f\in C_{T}\calC^{\epsilon}$ for $\epsilon>0$) by Schauder estimates for the first four terms and by the commutator estimate from \cref{lem:sharp} below for last term on the right hand side. This explains why  the paracontrolled ansatz might be \emph{justified}. The reason why the ansatz is \emph{useful} is that it isolates the singular part of $u$ in a paraproduct, and then we can use commutator estimates to handle the paraproduct.
\end{remark}

Therefore, we have to show that assuming the paracontrolled ansatz we can make sense of the product $\nabla u\reso V$ (by moreover postulating the existence of certain extrinsically given resonant products of $V$) and that the paracontrolled ansatz is stable under the Banach fixed point map. To make this precise, we need to define the Banach space of paracontrolled distributions. From now on we fix $\alpha \in (1,2]$ and $\beta \in (\tfrac{2-2\alpha}{3}, 0)$ and we define paracontrolled distributions as follows:

\begin{definition}\label{def:paradist}
Let $T>0$ let $ V\in C_{T}\mathcal{C}^{\beta}_{\R^{d}}$ be fixed. For $\theta \in ((2-\beta)/2,\beta+\alpha)$ and $\overline{T}\in (0,T]$, we define the space of \emph{paracontrolled distributions} $\mathcal{D}_{\overline{T},T}^{\theta} = \mathcal{D}_{\overline{T},T}^\theta(V)$ as the set of tuples $(u,u')\in (C_{\overline{T},T}\calC^{\theta}\cap C_{\overline{T},T}^{\theta/\alpha}L^{\infty})\times C_{\overline{T},T}\calC^{\theta-1}_{\R^{d}}$ such that $u^{\sharp}:=u-u'\para J^{T}(V)\in C_{\overline{T},T}\calC^{2\theta-1}$. We define a norm on $\mathcal{D}_{\overline{T},T}^{\theta}$ by setting
\begin{align*}
\norm{(u,u')-(v,v')}_{\mathcal{D}_{\overline{T},T}^{\theta}} :=\norm{u-v}_{\calD} &:= \norm{u-v}_{C_{\overline{T},T}\mathcal{C}^{\theta}}+\norm{ u- v}_{C^{\theta/\alpha}_{\overline{T},T}L^{\infty}}
\\
	&\qquad
		+\norm{u'-v'}_{C_{\overline{T},T}\mathcal{C}^{\theta-1}_{\R^{d}}}+\norm{u^{\sharp}-v^{\sharp}}_{C_{\overline{T},T}\mathcal{C}^{2\theta-1}}.
\end{align*} 
Then, $(\mathcal{D}_{\overline{T},T}^{\theta},\norm{\cdot}_{\mathcal{D}^{\theta}_{\overline{T},T})}$ is a Banach space. If moreover $W \in C_{T}\mathcal{C}^{\beta}_{\R^{d}}$ and $(v,v') \in \mathcal{D}_{\overline{T},T}^\theta(W)$, then we use the same notation $\norm{(u,u')-(v,v')}_{\mathcal{D}_{\overline{T},T}^{\theta}}$ or $\norm{u-v}_{\calD}$ with the same definition, despite the fact that $(u,u')$ and $(v,v')$ do not live in the same space.
\end{definition}

\begin{remark}
In contrast to the definition of Cannizzaro and Chouk, we included the norm $\norm{u}_{C_{T}^{\theta/\alpha}L^{\infty}}$ instead of $\norm{\nabla u}_{C_{T}^{(\theta-1)/\alpha}L^{\infty}_{\R^{d}}}$, as it will be easier to show continuity of the solution map w.r.t. the $C_{T}^{\theta/\alpha}L^{\infty}$-norm, which will be needed below. 
Moreover, our space of paracontrolled distributions does not depend on the right hand side $f$.
\end{remark}

If we assume that $u$ is paracontrolled, then we can make sense of the problematic term $\nabla u\reso V$, despite the fact that $u$ has insufficient regularity: We have
\begin{align}\label{eq:derivation}
\partial_{j} u\reso V^{j}&=\sum_{i=1}^{d}(u^{\prime,i}\para J^{T}(\partial_{j}V^{i}))\reso V^{j}+U^{\sharp ,j}\reso V^{j}\nonumber\\&=\squeeze[2]{\sum_{i=1}^{d}u^{\prime,i}(J^{T}(\partial_{j}V^{i})\reso V^{j})+\sum_{i=1}^{d}\mathcal{R}(u^{\prime,i},J^{T}(\partial_{j}V^{i}),V^{j})+U^{\sharp ,j}\reso V^{j}},
\end{align} where we define
\begin{align}\label{eq:r}
\mathcal{R}(f,g,h):=(f\para g)\reso h-f(g\reso h)
\end{align} and $U^{\sharp ,j}= \partial_{j} u^{\sharp}+\sum_{i=1}^{d}\partial_{j}u^{\prime,i}\para J^{T}(V^{i})\in C_{T}\calC^{2\theta-2}$. By the commutator lemma \cite[Lemma 2.4]{Gubinelli2015Paracontrolled} the term $\mathcal{R}(u^{\prime,i},J^{T}(\partial_{j}V^{i}),V^{j})$ is well defined and in $C_T\calC^{2\theta-2+\beta}$. The term $J^{T}(\partial_{j}V^{i}))\reso V^{j}$ is still ill defined, but it only depends on $V$. So let us assume that we are extrinsically given for all $i$ and $j$ the resonant products $J^{T}(\partial_{j}V^{i})\reso V^{j}\in \CTcalC^{2\beta+\alpha-1}$. Then the product $u^{\prime,i}(J^{T}(\partial_{j}V^{i})\reso V^{j})$ is well defined since $\theta-1 + 2\beta+\alpha-1>0$ and thus the product $\nabla u\reso V=\sum_{j=1}^{d}\partial_{j} u\reso V^{j}$ is well defined in $C_{T}\calC^{2\theta-2+\beta}$.

This discussion motivates the following definition:

\begin{definition}\textbf{(Enhanced drift)}\label{def:roughdist}\\
Let $\beta\in (\frac{2-2\alpha}{3},0)$ and $T>0$. For $\beta \in (\frac{1-\alpha}{2},0)$ we define the space $\mathcal X^\beta := C_T \calC^\beta_{\R^d}$. For $\beta\in (\frac{2-2\alpha}{3},\frac{1-\alpha}{2}]$ we define $\mathcal{X}^{\beta}$ as the closure of
\begin{align*}
\{\calK (\eta ):= (\eta,(J^{T}(\partial_{j}\eta^{i})\reso\eta^{j})_{i,j=1,...,d}):\, \eta\in C_T C^{\infty}_{b}(\R^{d},\R^{d})\}
\end{align*}
in $\CTcalC^{\beta}_{\R^{d}}\times \CTcalC^{2\beta+\alpha-1}_{\R^{d^{2}}}$
In that case we will also denote the elements of $\mathcal{X}^{\beta}$ by $\mathcal{V}=(\mathcal{V}_{1},\mathcal{V}_{2})$, and we say that $\mathcal{V}$ is a \emph{lift} or an \emph{enhancement} of $V$ if $\mathcal{V}_{1}=V$.
\end{definition}

\begin{proposition}\label{prop:product}
Let $T>0$, $\overline{T}\in (0,T)$ and $(2-\beta)/2<\theta<\beta+\alpha$ and $\beta\in (\frac{2-2\alpha}{3},\frac{1-\alpha}{2}]$. For $(u,u')\in\mathcal{D}_{\overline{T},T}^{\theta}$ and $\mathcal{V}=(\mathcal{V}_{1},\mathcal{V}_{2})\in \mathcal{X}^{\beta}$, we define 
\begin{align*}
\nabla u\reso \mathcal{V}:=\sum_{i,j=1}^{d}u^{\prime,i}\mathcal{V}_{2}^{i,j}+\sum_{i,j=1}^{d}\mathcal{R}(u^{\prime,i},J^{T}(\partial_{j}\mathcal{V}_{1}^{i}),\mathcal{V}_{1}^{j})+\sum_{j=1}^{d}U^{\sharp,j}\reso\mathcal{V}_{1}^{j}.
\end{align*}
Here, the commutator $\mathcal{R}$ is as in \eqref{eq:r} and $U^{\sharp,j}=\partial_{j}u^{\sharp}+\sum_{i=1}^{d}\partial_{j}u^{\prime,i}\para J^{T}(\mathcal{V}_{1}^{i})\in C_{\overline T, T} \calC^{2\theta-2}$. Then, the map $\mathcal{D}_{\overline{T},T}^{\theta}\ni(u,u')\mapsto\nabla u\reso V\in C_{\overline{T},T}\mathcal{C}^{2\beta+\alpha-1}$ is Lipschitz continuous, more precisely
\begin{align}\label{eq:2}
\MoveEqLeft
\norm{\nabla u\reso\mathcal{V}-\nabla v\reso\mathcal{V}}_{C_{\overline{T},T}\mathcal{C}^{2\beta+\alpha-1}}\lesssim \norm{\mathcal{V}}_{\mathcal{X}^{\beta}}(1+\norm{\mathcal{V}}_{\mathcal{X}^{\beta}})\norm{(u,u')-(v,v')}_{\mathcal{D}_{\overline{T},T}^{\theta}}.
\end{align}
Moreover, the product $\nabla u\cdot\mathcal{V}:=\nabla u\para \mathcal{V}_{1}+\nabla u\arap\mathcal{V}_{1}+\nabla u\reso\mathcal{V}$, where $\nabla u\reso\mathcal{V}$ is defined as above, is well defined in $\CTcalC^{\beta}$.
\end{proposition}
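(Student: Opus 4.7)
The plan is to estimate each of the three summands in the definition of $\nabla u \reso \mathcal V$ separately in $C_{\overline T,T}\calC^{2\beta+\alpha-1}$, using only the paraproduct bounds~\eqref{eq:paraproduct-estimates}, the Schauder estimates of \cref{cor:schauder}, and the commutator lemma from \cite[Lemma~2.4]{Gubinelli2015Paracontrolled}. For the first summand, the hypothesis $\theta > (2-\beta)/2$ gives $\theta - 1 > -\beta/2 > 0$, so Bony's product estimate directly yields
\[
\norm{u^{\prime,i}\,\mathcal V_2^{i,j}}_{C_{\overline T,T}\calC^{2\beta+\alpha-1}} \lesssim \norm{u'}_{C_{\overline T,T}\calC^{\theta-1}_{\R^d}}\,\norm{\mathcal V_2}_{C_{\overline T,T}\calC^{2\beta+\alpha-1}_{\R^{d^2}}}.
\]
For the commutator summand, \cref{cor:schauder} gives $J^T(\partial_j\mathcal V_1^i)\in C_{\overline T,T}\calC^{\alpha+\beta-1}$, and the commutator lemma with exponents $\theta-1\in(0,1)$, $\alpha+\beta-1$, and $\beta$ produces a bound in $C_{\overline T,T}\calC^{\theta+2\beta+\alpha-2}$. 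The conditions $\theta+2\beta+\alpha-2>0$ and $\alpha+2\beta-1<0$ follow from $\beta\in\bigl(\tfrac{2-2\alpha}{3},\tfrac{1-\alpha}{2}\bigr]$ combined with $\theta>1-\beta/2$, and since $\theta\geq 1$ the result embeds into $C_{\overline T,T}\calC^{2\beta+\alpha-1}$.

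The delicate summand is $\sum_j U^{\sharp,j}\reso\mathcal V_1^j$. I would first decompose $U^{\sharp,j}$: the piece $\partial_j u^\sharp$ lies in $C_{\overline T,T}\calC^{2\theta-2}$ by definition of $\calD$, while $\partial_j u^{\prime,i}\para J^T(\mathcal V_1^i)$ lies in $C_{\overline T,T}\calC^{\theta+\beta+\alpha-2}$ by the third paraproduct estimate in~\eqref{eq:paraproduct-estimates} (applied with $\partial_j u^{\prime,i}\in\calC^{\theta-2}$, $\theta-2<0$) and \cref{cor:schauder}. Since $\theta<\beta+\alpha$, the latter regularity is at least $2\theta-2$, so overall $U^{\sharp,j}\in C_{\overline T,T}\calC^{2\theta-2}$. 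The resonant product with $\mathcal V_1^j\in C_{\overline T,T}\calC^\beta$ is then well-posed precisely because $(2\theta-2)+\beta>0$, which is exactly the hypothesis $\theta>(2-\beta)/2$ — so the lower bound on $\theta$ in \cref{def:paradist} is sharp for this step. The product lives in $C_{\overline T,T}\calC^{2\theta-2+\beta}$, and the embedding into $C_{\overline T,T}\calC^{2\beta+\alpha-1}$ follows from $\beta\leq(1-\alpha)/2$.

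The Lipschitz bound~\eqref{eq:2} is then obtained by telescoping: write $\nabla u\reso\mathcal V-\nabla v\reso\mathcal V$ as a sum of terms in which exactly one of the factors $u'-v'$ or $u^\sharp-v^\sharp$ appears, and apply the above estimates to each piece. The factor $\norm{\mathcal V}_{\calX^\beta}(1+\norm{\mathcal V}_{\calX^\beta})$ reflects that the first summand is linear in $\mathcal V_2$, whereas the commutator summand and the part of the third summand coming from $\partial_j u^{\prime,i}\para J^T(\mathcal V_1^i)$ are both quadratic in $\mathcal V_1$. For the remaining assertion, $\nabla u\para\mathcal V_1\in C_T\calC^\beta$ by~\eqref{eq:paraproduct-estimates} using $\nabla u\in L^\infty$; $\nabla u\arap\mathcal V_1=\mathcal V_1\para\nabla u\in C_T\calC^{\theta-1+\beta}\subset C_T\calC^\beta$ since $\theta-1>0$; and $\nabla u\reso\mathcal V\in C_T\calC^{2\beta+\alpha-1}\subset C_T\calC^\beta$ since $\beta>(2-2\alpha)/3>1-\alpha$ gives $2\beta+\alpha-1>\beta$. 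The main obstacle throughout is the tight regularity accounting needed to place $U^{\sharp,j}\reso\mathcal V_1^j$ into the target space; everything else is routine Bony calculus once this is in place.
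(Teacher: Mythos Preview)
Your argument is correct and follows the same approach as the paper --- namely, checking the regularity arithmetic separately for each of the three summands. The paper's own proof is considerably terser: it only verifies that the relevant sums of regularities are strictly positive (so that each term is well defined), and leaves the target regularity, the embeddings into $\calC^{2\beta+\alpha-1}$, the Lipschitz bound, and the final statement about $\nabla u\cdot\mathcal V$ to the reader. Your write-up fills in exactly those details.

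One small presentational point: for the first summand you invoke ``Bony's product estimate'' after only noting $\theta-1>-\beta/2>0$. The product $u^{\prime,i}\,\mathcal V_2^{i,j}$ actually requires the sum condition $(\theta-1)+(2\beta+\alpha-1)>0$, which you do not state explicitly there. It does follow from your inequality $\theta-1>-\beta/2$ combined with $\beta>(2-2\alpha)/3$ (since then $(\theta-1)+(2\beta+\alpha-1)>\tfrac32\beta+\alpha-1>0$), and you carry out exactly this computation for the commutator summand; it would be cleaner to say so for the first summand as well.
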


\begin{proof}
	The products $u^{\prime,i}\mathcal{V}_{2}^{i,j}$ are well defined because the sum of the regularities is $\theta-1+2\beta+\alpha-1 > -1 + \tfrac32 \beta + \alpha > 0$. The commutators $\mathcal{R}(u^{\prime,i},J^{T}(\partial_{j}\mathcal{V}_{1}^{i}),\mathcal{V}_{1}^{j})$ are well defined because the sum of the regularities is $\theta-1 + \beta + \alpha -1 + \beta  > -1 + \tfrac32 \beta + \alpha > 0$. The resonant products $U^{\sharp,j}\reso\mathcal{V}_{1}^{j}$ are well defined since the sum of the regularities is $2\theta-2 + \beta > 2 - \beta -2 + \beta = 0$.
\end{proof}
We already motivated in \cref{rem:intuition}, that we will need the following commutator lemma concerning the action of the $J^{T}$ operator on the $\para$-paraproduct. Its proof can be found in \cref{sec:commutator-appendix}.

\begin{lemma}\label{lem:sharp}
Let $T>0$, $0<\sigma<1$, $\varsigma\in\R$ with $-1\leqslant\sigma-\varsigma+1<\alpha$ and $h\in \CTcalC^{\varsigma}_{\R^{d}}$. For $\overline{T}\in (0,T]$, let $g\in C^{\sigma /\alpha}_{\overline{T},T}L^{\infty}_{\R^{d}}\cap C_{\overline{T},T}\mathcal{C}^{\sigma}_{\R^{d}}$. Then the following inequality holds
\begin{align*}
\norm{J^{T}( g\para h)- g\para J^{T}(h)}_{C_{\overline{T},T}\mathcal{C}^{2\sigma+1}}\lesssim \overline{T}^{\,\kappa}(\norm{g}_{C_{\overline{T},T}\mathcal{C}^{\sigma}_{\R^{d}}}+\norm{g}_{C^{\sigma /\alpha}_{\overline{T},T}L^{\infty}_{\R^{d}}})\norm{h}_{C_{T}\mathcal{C}^{\varsigma}_{\R^{d}}}
\end{align*} where $\kappa=1-\frac{\sigma+1-\varsigma}{\alpha}>0$.
\end{lemma}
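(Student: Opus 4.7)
The plan is to split the commutator into a part controlled by the time regularity of $g$ and a part controlled by its spatial regularity, and to estimate each using the Schauder bounds of~\cref{schauder}, the paraproduct estimates~\eqref{eq:paraproduct-estimates}, and a Littlewood--Paley-based commutator estimate between $P_{s}$ and the paraproduct. First, for each $t\in[T-\overline T, T]$ I would write
\begin{align*}
	J^T(g\para h)(t) - g(t)\para J^T(h)(t) &= \int_t^T \bigl\{P_{r-t}[g(r)\para h(r)] - g(t)\para P_{r-t} h(r)\bigr\}\diff r \\
	&= \int_t^T A(r,t)\diff r + \int_t^T B(r,t)\diff r,
\end{align*}
where, after inserting and subtracting $g(r)\para P_{r-t} h(r)$,
\begin{align*}
	A(r,t) := P_{r-t}[g(r)\para h(r)] - g(r)\para P_{r-t} h(r), \qquad B(r,t) := \bigl[g(r) - g(t)\bigr]\para P_{r-t} h(r).
\end{align*}

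For the easier term $B$, I would combine the second line of~\eqref{eq:paraproduct-estimates} (applicable since $\sigma>0$) with the Hölder bound $\norm{g(r)-g(t)}_{L^\infty_{\R^d}} \leqslant (r-t)^{\sigma/\alpha} \norm{g}_{C^{\sigma/\alpha}_{\overline T,T} L^\infty_{\R^d}}$ and the Schauder estimate~\eqref{eq:schauder1} applied to $P_{r-t}h(r)$ at level $\vartheta=(2\sigma+1-\varsigma)\vee 0\in[0,\alpha)$, giving
\begin{align*}
	\norm{B(r,t)}_{2\sigma+1}\lesssim (r-t)^{\sigma/\alpha}\bigl((r-t)^{-(2\sigma+1-\varsigma)/\alpha}\vee 1\bigr)\norm{g}_{C^{\sigma/\alpha}_{\overline T,T} L^\infty_{\R^d}}\norm{h}_{C_T\calC^\varsigma_{\R^d}}.
\end{align*}
Since $\sigma+1-\varsigma<\alpha$, the resulting exponent of $(r-t)$ is strictly larger than $-1$, and integrating on $[t,T]$ produces a factor $\overline T^{\,\kappa}$.

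The heart of the argument is the spatial commutator $A(r,t)$. Here I would decompose $g(r)\para h(r)=\sum_{j\geqslant -1}S_{j-1}g(r)\cdot \Delta_j h(r)$ and use the convolution representation $P_{r-t}u=\rho_{r-t}\ast u$ from~\cref{schauder}, so that each block becomes
\begin{align*}
	A_j(r,t,x)=\int_{\R^d}\rho_{r-t}(y)\bigl[S_{j-1}g(r,x+y)-S_{j-1}g(r,x)\bigr]\Delta_j h(r,x+y)\diff y.
\end{align*}
Because $S_{j-1}g(r)\cdot\Delta_j h(r)$ has Fourier support in an annulus of radius $\sim 2^j$, the block $A_j$ contributes only to the Littlewood--Paley blocks $\Delta_k A$ with $\abs{k-j}\leqslant C$. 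Combining the Hölder bound $\abs{S_{j-1}g(r,x+y)-S_{j-1}g(r,x)}\lesssim \abs{y}^\sigma \norm{g(r)}_\sigma$, the moment estimate $\int \rho_{r-t}(y)\abs{y}^\sigma\diff y\lesssim (r-t)^{\sigma/\alpha}$ (which is finite since $\sigma<1\leqslant \alpha$, by the $\alpha$-stable scaling of $\rho$), and $\norm{\Delta_j h(r)}_{L^\infty}\lesssim 2^{-j\varsigma}\norm{h}_\varsigma$, one obtains a low-frequency bound on each block. Matching it with the high-frequency smoothing of $P_{r-t}$, which follows from the fast decay of $\hat\rho_{r-t}=e^{-(r-t)\psi_\nu^\alpha}$ and underlies~\eqref{eq:schauder1}, yields after summation
\begin{align*}
	\norm{A(r,t)}_{2\sigma+1}\lesssim (r-t)^{-(\sigma+1-\varsigma)/\alpha}\norm{g}_{C_{\overline T,T}\calC^\sigma_{\R^d}}\norm{h}_{C_T\calC^\varsigma_{\R^d}},
\end{align*}
which again integrates to $\overline T^{\,\kappa}$.

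The main obstacle is the spatial commutator bound on $A$: the plain $\abs{y}^\sigma$ argument is sharp only in the regime $2^j(r-t)^{1/\alpha}\lesssim 1$, and one has to interpolate it with the smoothing of $P_{r-t}$ at high frequencies to prevent a blow-up of the $j$-sum when $2\sigma+1>\varsigma$. This is the $\alpha$-stable analogue of the classical heat-semigroup--paraproduct commutator from Appendix~A of~\cite{Gubinelli2015Paracontrolled}, and the argument transfers with only notational changes once the Gaussian kernel is replaced by $\rho_{r-t}$.
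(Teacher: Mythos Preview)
Your proposal is correct and follows exactly the same decomposition as the paper: the split into $A$ (the spatial commutator between $P_{r-t}$ and the paraproduct, the paper's $I_1$) and $B$ (the time-increment of $g$, the paper's $I_2$), together with the Schauder and paraproduct bounds, is identical. The only difference is packaging: for the $A$-term the paper invokes a ready-made commutator estimate $\norm{P_t(u\para v)-u\para P_t v}_{\gamma+\beta+\vartheta}\lesssim t^{-\vartheta/\alpha}\norm{u}_\gamma\norm{v}_\beta$ (\cref{schaudercom}, cited from~\cite{doktorp}) with $\gamma=\sigma$, $\beta=\varsigma$, $\vartheta=\sigma+1-\varsigma$, whereas you sketch its proof directly via the kernel representation and the low-/high-frequency interpolation you describe.
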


For fixed $\mathcal{V}\in\mathcal{X}^{\beta}$ and $f\in\CTcalC^{\epsilon}$, $\epsilon>0$, or $f=\mathcal{V}_{1}^{j}$ for some $j$, the contraction mapping will now be defined as
\begin{align*}
\Phi^{\overline{T},T}:\,&\mathcal{D}_{\overline{T},T}^{\theta}\longrightarrow\mathcal{D}_{\overline{T},T}^{\theta},\qquad(u,u')\mapsto (v,v'),
\end{align*} where
\begin{align}
v:=-J^{T}(f)+J^{T}(\nabla u\cdot\mathcal{V})+\psi^{T}
\end{align} for $\psi^{T}_{t}=P_{T-t}u^{T}$ and
\begin{align*}
v':=\begin{cases}
\nabla u\quad &\text{ if } f\in C_{T}\calC^{\epsilon}\\
\nabla u-e_{j}\quad &\text{ if } f=\mathcal{V}_{1}^{j}
\end{cases},
\end{align*}
where $(e_{i})$ is the canonical basis of $\R^{d}$.

\begin{theorem}\label{prop:Banachmap}
Let $T>0$, $\beta\in (\frac{2-2\alpha}{3},\frac{1-\alpha}{2}]$ and $\theta \in ((2-\beta)/2,\beta+\alpha)$. Let $u^{T}\in\mathcal{C}^{2\theta-1}$, $f\in\CTcalC^{\epsilon}$ or $f=\mathcal{V}_{1}^{j}$ for some $j$ and let $\mathcal{V}\in\mathcal{X}^{\beta}$. Then there exists a unique fixed point of the map $\Phi^{T,T}$ in $\mathcal{D}^{\theta}_{T,T}$, that is, a unique (mild) solution of the Kolmogorov backward equation 
\begin{align}\label{eq:rpde}
\mathcal{G}^{(\mathcal{V}_{1},\mathcal{V}_{2})}u=f,\qquad u(T,\cdot)=u^{T},
\end{align} where $\mathcal{G}^{(\mathcal{V}_{1},\mathcal{V}_{2})}:=\partial_{t}+\La+\mathcal{V}\cdot \nabla $. Moreover, for $V \in C_T (C^{\infty}_b)^{d}$ and $f\in C_T C_{b}$ the solution $u$ of $\mathcal{G}^{(V,\calK(V))}u=f$, $u(T,\cdot)=u^{T}$, agrees with the classical solution of the PDE. 
\end{theorem}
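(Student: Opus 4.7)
The approach is to apply Banach's fixed point theorem to $\Phi^{\overline T, T}$ on $\mathcal{D}^{\theta}_{\overline T, T}$ for $\overline T$ small enough, and then patch the local solutions into a global one on $[0,T]$. The core difficulty is to show that the sharp remainder $v^\sharp := v - v' \para J^T(\mathcal{V}_1)$ of the image $(v, v') = \Phi^{\overline T, T}(u, u')$ lies in $C_{\overline T, T} \calC^{2\theta-1}$.

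First I would check stability $(v, v') \in \mathcal{D}^{\theta}_{\overline T, T}$. Since $v' \in \{\nabla u,\, \nabla u - e_j\}$, the bound $v' \in C_{\overline T, T} \calC^{\theta-1}_{\R^{d}}$ is immediate. By \cref{prop:product} the product $\nabla u \cdot \mathcal{V}$ lies in $C_{\overline T, T} \calC^{\beta}$, so applying \cref{cor:schauder} to $J^T(\nabla u \cdot \mathcal{V})$ and $J^T(f)$, together with \cref{schauder} applied to $P_{T-\cdot} u^T$, delivers both $\norm{v}_{C_{\overline T, T}\calC^{\theta}}$ and $\norm{v}_{C^{\theta/\alpha}_{\overline T, T} L^{\infty}}$.

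The main obstacle is the $\calC^{2\theta-1}$ bound on $v^{\sharp}$. For $f \in C_T \calC^{\epsilon}$ one has $v' = \nabla u$, and expanding $\nabla u \cdot \mathcal{V} = \nabla u \para \mathcal{V}_1 + \nabla u \arap \mathcal{V}_1 + \nabla u \reso \mathcal{V}$ yields
\begin{align*}
v^{\sharp} = P_{T-\cdot} u^T - J^T(f) + J^T(\nabla u \arap \mathcal{V}_1) + J^T(\nabla u \reso \mathcal{V}) + \brackets*{J^T(\nabla u \para \mathcal{V}_1) - \nabla u \para J^T(\mathcal{V}_1)}.
\end{align*}
\cref{schauder} handles the first two terms using $u^T \in \calC^{2\theta-1}$ and choosing $\theta$ close enough to $(2-\beta)/2$ so that $\epsilon + \alpha \geqslant 2\theta-1$. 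For the $\arap$-term, \eqref{eq:paraproduct-estimates} gives $\nabla u \arap \mathcal{V}_1 \in C\calC^{\theta+\beta-1}$ and \cref{cor:schauder} pushes this to $\calC^{\theta+\beta+\alpha-1} \subset \calC^{2\theta-1}$ since $\theta < \beta+\alpha$. For the resonant term, \cref{prop:product} gives $\nabla u \reso \mathcal{V} \in C\calC^{2\beta+\alpha-1}$, hence $J^T$ of it sits in $\calC^{2\beta+2\alpha-1} \subset \calC^{2\theta-1}$. The bracketed commutator is exactly the setting of \cref{lem:sharp} with $\sigma = \theta-1$ and $\varsigma = \beta$, which produces the sharp regularity $2\theta-1$ together with, crucially, a small prefactor $\overline T^{\kappa}$, $\kappa > 0$. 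In the case $f = \mathcal{V}_1^j$ the shift $v' = \nabla u - e_j$ is designed so that $-J^T(\mathcal{V}_1^j)$ cancels against $e_j \para J^T(\mathcal{V}_1) = 1 \para J^T(\mathcal{V}_1^j)$ up to the smooth low-frequency remainder $\Delta_{-1}(J^T(\mathcal{V}_1^j)) + \Delta_0(J^T(\mathcal{V}_1^j))$, and the rest of the analysis is identical.

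Contraction then follows from the same bounds applied to the difference of two iterates: by linearity the $u^T$ and $f$ contributions drop out, and every remaining term carries a positive power of $\overline T$ coming from \cref{cor:schauder} and \cref{lem:sharp}. Choosing $\overline T$ small enough, depending only on $\norm{\mathcal{V}}_{\mathcal{X}^{\beta}}$, produces a unique fixed point on $[T-\overline T, T]$. Since $\overline T$ is independent of $u^T$ and $f$, we iterate on successive subintervals of length $\overline T$, transporting the full paracontrolled pair across the joins (the semigroup property of $(P_t)$ together with Duhamel's formula ensures consistency between the local and global mild formulations) to cover $[0, T]$. Finally, when $V$ is smooth and $f$ is continuous, the classical $C^{\infty}_{b}$-valued solution of \eqref{eq:rpde} satisfies Duhamel $u = P_{T-\cdot}u^T - J^T(f) + J^T(V \cdot \nabla u)$; a direct check shows $(u, \nabla u) \in \mathcal{D}^{\theta}_{T,T}(V)$ relative to the canonical lift $\calK(V)$ and that the intrinsic product $\nabla u \reso \calK(V)$ from \cref{prop:product} agrees with the classical pointwise resonant product, so by uniqueness of the fixed point the paracontrolled and classical solutions coincide.
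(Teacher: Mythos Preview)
Your proposal is correct and follows the same strategy as the paper: contraction of $\Phi^{\overline T,T}$ on $\mathcal D^\theta_{\overline T,T}$ via the decomposition of $v^\sharp$ and \cref{lem:sharp}, then iteration. Two small points are worth tightening. First, you cannot ``choose $\theta$ close to $(2-\beta)/2$'' since $\theta$ is fixed in the statement; fortunately the inequality $\epsilon+\alpha>2\theta-1$ holds automatically in the rough regime, because $\theta<\beta+\alpha$ and $\beta\leqslant(1-\alpha)/2$ give $2\theta-1<\alpha$. Second, the iteration hides a subtlety the paper makes explicit: on $[T-2\overline T,T-\overline T]$ one still paracontrolls by $J^{T}(\mathcal V_1)$ (not $J^{T-\overline T}$), and the new terminal value $u(T-\overline T)$ is only in $\calC^\theta$, but what is actually needed is $u^\sharp(T-\overline T)=u(T-\overline T)-\nabla u(T-\overline T)\para J^T(\mathcal V_1)(T-\overline T)\in\calC^{2\theta-1}$, which is guaranteed precisely because $(u,\nabla u)$ is paracontrolled on the previous interval. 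Your phrase ``transporting the full paracontrolled pair'' gestures at this but you should state it.
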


\begin{proof}
We first consider $f\in C_{T}\calC^{\epsilon}$ for $\epsilon>0$, and we show that for $\overline{T} \in (0,T]$ and for $(u,u')\in\calD$ we have $\Phi^{\overline{T},T}(u,u')\in\calD$, and that there exists $\kappa>0$, depending only on $\theta$ and $\beta$, such that
\begin{align}\label{eq:contraction}
\norm{\Phi^{\overline{T},T}(u,u')-\Phi^{\overline{T},T}(\tilde u,\tilde u')}_{\calD}\lesssim (1+\norm{\mathcal{V}}_{\mathcal{X}^{\beta}})\norm{\mathcal{V}}_{\mathcal{X}^{\beta}}\,\overline{T}^{\,\kappa}\norm{(u,u')-(\tilde u,\tilde u')}_{\calD},
\end{align} 
so in particular that $\Phi^{\overline T,T}$ is a strict contraction for sufficiently small $\overline{T}$. By linearity of $\Phi^{\overline T,T}$ it suffices to estimate $\norm{\Phi^{\overline{T},T}(u,u')}_{\calD}$.

So let $\Phi^{\overline{T},T}(u,u')=(v,v')$. We need to bound the norms 
\begin{align*}
\norm{v}_{C_{\overline{T},T}\calC^{\theta}},\quad\norm{v}_{C_{\overline{T},T}^{\theta/\alpha}L^{\infty}},\quad\norm{v'}_{C_{\overline{T},T}\calC^{\theta-1}_{\R^{d}}},\quad\norm{\Phi^{\overline{T},T}(u,u')^{\sharp}}_{C_{\overline{T},T}\calC^{2\theta-1}},
\end{align*} where $\Phi^{\overline{T},T}(u,u')^{\sharp}=v-v'\para J^{T}(\mathcal{V}_{1})=v-\nabla u\para J^{T}(\mathcal{V}_{1})$. We only show the estimate for $\norm{v}_{C_{\overline{T},T}^{\theta/\alpha}L^{\infty}}$. The other terms can be estimated using the same arguments as in \cite[Proposition 3.9]{Cannizzaro2018}, the only difference is that we use the Schauder estimates for $-\La$ instead of those for the Laplacian. 
For $T-\overline{T}\leqslant r<t\leqslant T$ we have by \eqref{eq:schauder2} and \eqref{j2}
\begin{align}\label{eq:b}
\MoveEqLeft
\norm{v(t)-v(r)}_{L^{\infty}}\nonumber\\&\leqslant\squeeze[2]{\norm{\psi^{T}_{t}-\psi^{T}_{r}}_{L^{\infty}}+\norm{J^{T}(\nabla u\cdot\mathcal{V})(t)-J^{T}(\nabla u\cdot \mathcal{V})(r)}_{L^{\infty}}+\norm{J^{T}(f)(t)-J^{T}(f)(r)}_{L^{\infty}}}\nonumber\\&\lesssim\overline{T}^{\kappa_{1}}\abs{t-r}^{\theta/\alpha}\paren[\bigg]{\norm{u^{T}}_{\beta+\alpha}+\norm{\nabla u\cdot\mathcal{V}}_{C_{\overline{T},T}\calC^{\beta}}+\norm{f}_{C_{\overline{T},T}\calC^{\beta}}}\nonumber\\&\lesssim \overline{T}^{\kappa_{1}}\abs{t-r}^{\theta/\alpha}\paren[\bigg]{\norm{u^{T}}_{\beta+\alpha}+(1+\norm{\mathcal{V}}_{\mathcal{X}^{\beta}})\norm{\mathcal{V}}_{\mathcal{X}^{\beta}}\norm{u}_{\calD}+\norm{f}_{C_{\overline{T},T}\calC^{\beta}}}
\end{align} where $\kappa_{1}=\frac{\beta+\alpha-\theta}{\alpha}$ and where we used the estimate for the product $\nabla u\cdot\mathcal{V}=\nabla u\reso\mathcal{V}_{1}+\nabla u\para\mathcal{V}_{1}+\nabla u\arap\mathcal{V}_{1}$ from \cref{prop:product}. Thus,  we have
\begin{align*}
\norm{v}_{C_{\overline{T},T}^{\theta/\alpha}L^{\infty}}\lesssim \overline{T}^{\kappa_{1}}\paren[\bigg]{\norm{u^{T}}_{\beta+\alpha}+\norm{f}_{C_{\overline{T},T}\calC^{\epsilon}}+(1+\norm{\mathcal{V}}_{\mathcal{X}^{\beta}})\norm{\mathcal{V}}_{\mathcal{X}^{\beta}}\norm{u}_{\calD}}.
\end{align*}

To obtain a mild solution $u$ on $[0,T]$, we solve the equation iteratively first on $[T-\overline{T},T]$ with terminal condition $u^{T}\in\calC^{2\theta-1}$, then on $[T-2\overline{T},T-\overline{T}]$, and so on. There is a small subtlety because also on $[T-2\overline{T},T-\overline{T}]$ we will consider solutions that are  paracontrolled by $J^{T}(V)$ and not by $J^{\overline T, T}$. Moreover, the terminal condition $u(T-\overline{T},\cdot)$ is only in $\calC^\theta$ and not in $\calC^{2\theta-1}$. But we only needed $u^T \in \calC^{2\theta-1}$ in order to obtain a regular terminal condition $u^\sharp(T) \in \calC^{2\theta-1}$. And on the interval $[T-2\overline{T},T-\overline{T}]$ we have the terminal condition $u(T-\overline T) - \nabla u(T-\overline T)\para J^T V(T - \overline T)$, which is in $\calC^{2\theta-1}$ since $u$ is paracontrolled on $[T-\overline T, T]$. By iterating this, we obtain a unique fixed point of the map $\Phi^{T,T}$  and thus the unique paracontrolled solution of the equation on $[0,T]$.

The case $f=\mathcal{V}_{1}^{j}$ for some $j \in \{1,\dots, d\}$ is similar and we omit the argument.\\
In the case of $V \in C_T (C^{\infty}_b)^{d}$ and $f\in C_T C_{b}$ the solution $u$ of $\mathcal{G}^{(V,\calK(V))}u=f$, $u(T,\cdot)=u^{T}$ agrees with the classical solution of the PDE, as the product $\calK(V)=V\circ J^{T}(\nabla V)$ is well-defined (in any Besov space with positive regularity) and the product $\mathcal{V}\cdot\nabla u$ agrees with the usual product $V\cdot\nabla u$ by the derivation in \eqref{eq:derivation}.
\end{proof}

\begin{remark}\label{rem:nablau}
If $u\in C_{T}^{\theta/\alpha}L^{\infty}\cap C_{T}\calC^{\theta}$, then $\nabla u\in C_{T}^{(\theta-1)/\alpha}L^{\infty}_{\R^{d}}$. Indeed, we estimate the Littlewood-Paley blocks in two different ways, once using the time regularity of $u$ and then the space regularity to interpolate between the two bounds. That is, we have
\begin{align*}
\norm{\Delta_{j}(u_{t}-u_{s})}_{L^{\infty}}\lesssim \abs{t-s}^{\theta/\alpha}\norm{u}_{C_{T}^{\theta/\alpha}L^{\infty}} \wedge 2^{-j\theta}\norm{u}_{C_{T}\calC^{\theta}}
\end{align*}
and thus for $\abs{t-s}\leqslant 1$
\begin{align*}
\norm{\nabla u_{t}-\nabla u_{s}}_{L^{\infty}_{\R^{d}}}&\lesssim\sum_{j}\norm{\Delta_{j}(\nabla u_{t}-\nabla u_{s})}_{L^{\infty}_{\R^{d}}} \lesssim \sum_{j}2^{j}\norm{\Delta_{j}(u_{t}-u_{s})}_{L^{\infty}}\\&\lesssim\sum_{j:2^{-j}\geqslant\abs{t-s}^{1/\alpha}}\abs{t-s}^{\theta/\alpha}2^{j}+\sum_{j:2^{-j}<\abs{t-s}^{1/\alpha}}2^{-j(\theta -1)}\\&\lesssim\abs{t-s}^{\theta/\alpha-1/\alpha}+\abs{t-s}^{(\theta -1)/\alpha}=2\abs{t-s}^{(\theta-1)/\alpha},
\end{align*} using that $\theta>1$ for the convergence of the geometric series in the estimate of the second summand.
\end{remark}

\begin{theorem}\label{thm:rgeneq}
In the setting of Theorem~\ref{prop:Banachmap}, the solution map 
\[
	(u^{T},\mathcal{V}, f)\in\calC^{2\theta-1} \times \mathcal{X}^{\beta} \times (\CTcalC^{\epsilon} \cup \{\mathcal V_1^1,\dots, \mathcal V_1^j\}) \mapsto u\in \CTcalC^{\theta}\cap C_{T}^{\theta/\alpha}L^{\infty},
\]
where $u$ is the solution of \eqref{eq:rpde} and $\CTcalC^{\theta}\cap C_{T}^{\theta/\alpha}L^{\infty}$ is equipped with the sum of the respective norms, is locally Lipschitz continuous.
\end{theorem}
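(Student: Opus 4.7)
The plan is to exploit the fixed-point characterization from \cref{prop:Banachmap} and compare two solutions by splitting the difference into a contraction part and a data-dependence part. Fix $R>0$ and consider two data triples $(u^{T},\mathcal{V},f)$ and $(\tilde{u}^{T},\tilde{\mathcal{V}},\tilde{f})$, each with norm bounded by $R$ in the natural product space. Let $(u,u'), (\tilde{u},\tilde{u}')$ be the corresponding paracontrolled solutions, with $u'=\nabla u$ (or $\nabla u - e_j$, in the $f=\mathcal{V}_1^j$ case) and analogously for $\tilde{u}'$. Note that $(u,u')$ and $(\tilde{u},\tilde{u}')$ live in $\calD(\mathcal V)$ and $\calD(\tilde{\mathcal V})$ respectively, but the norm $\|u-\tilde{u}\|_{\calD_{\overline T,T}^{\theta}}$ from \cref{def:paradist} still makes sense by the comment at the end of that definition.

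On a small interval $[T-\overline T, T]$ I would write $u = \Phi^{\overline T,T}_{(u^{T},\mathcal{V},f)}(u,u')$ and $\tilde u = \Phi^{\overline T, T}_{(\tilde u^{T},\tilde{\mathcal{V}},\tilde f)}(\tilde u,\tilde u')$ and decompose
\[
u-\tilde u \;=\; \Bigl[\Phi^{\overline T,T}_{(u^{T},\mathcal V,f)}(u,u')-\Phi^{\overline T,T}_{(u^{T},\mathcal V,f)}(\tilde u,\tilde u')\Bigr]+\Bigl[\Phi^{\overline T,T}_{(u^{T},\mathcal V,f)}(\tilde u,\tilde u')-\Phi^{\overline T,T}_{(\tilde u^{T},\tilde{\mathcal V},\tilde f)}(\tilde u,\tilde u')\Bigr].
\]
The first bracket is controlled by the contraction estimate \eqref{eq:contraction} from the proof of \cref{prop:Banachmap}, yielding a factor $C(R)\overline T^{\kappa}\|u-\tilde u\|_{\calD_{\overline T,T}^\theta}$. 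For the second bracket I would redo the Schauder/commutator/paraproduct estimates that appear in the proof of \cref{prop:Banachmap} and \cref{prop:product}, but now keeping track of the \emph{differences} $u^{T}-\tilde u^{T}$, $\mathcal V-\tilde{\mathcal V}$, $f-\tilde f$, and using multilinearity: every occurrence of $\mathcal V$ or $\mathcal V_2$ can be replaced by a difference at the cost of the remaining factors which are all bounded by $R$ (and by $\|\tilde u\|_{\calD_{\overline T,T}^\theta}$, also bounded by $C(R)$). In the subcase $f=\mathcal V_1^j$ the difference $f-\tilde f = \mathcal V_1^j-\tilde{\mathcal V}_1^j$ is automatically bounded by $\|\mathcal V-\tilde{\mathcal V}\|_{\mathcal X^\beta}$. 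Choosing $\overline T=\overline T(R)$ so that $C(R)\overline T^\kappa \leqslant 1/2$ and absorbing, I obtain
\[
\|u-\tilde u\|_{\calD_{\overline T,T}^\theta}\;\lesssim_R\;\|u^{T}-\tilde u^{T}\|_{2\theta-1}+\|\mathcal V-\tilde{\mathcal V}\|_{\mathcal X^\beta}+\|f-\tilde f\|_{C_{\overline T,T}\calC^\epsilon}.
\]

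Next I would iterate this over successive intervals $[T-k\overline T,T-(k-1)\overline T]$, exactly as in the existence proof of \cref{prop:Banachmap}. The only subtlety is that on the $k$-th interval the ``terminal value'' is $u(T-(k-1)\overline T)-\nabla u(T-(k-1)\overline T)\para J^{T}\mathcal V(T-(k-1)\overline T)$, and the analogous quantity with tildes. The difference of these two terminal values is controlled in $\calC^{2\theta-1}$ by $\|u-\tilde u\|_{\calD^\theta}$ restricted to the previous interval, plus a Lipschitz-in-$\mathcal V$ term coming from the different paracontrolled corrections. Hence the estimate on the $k$-th interval gains, on the right-hand side, a contribution from the previous interval of the form $C(R)\bigl(\|u-\tilde u\|_{\calD_{\overline T,T}^\theta\text{ on prev.}}+\|\mathcal V-\tilde{\mathcal V}\|_{\mathcal X^\beta}\bigr)$. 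Summing over the $\lceil T/\overline T\rceil$ intervals gives a (finite, $R$-dependent) geometric amplification and produces the global Lipschitz bound on $[0,T]$. Finally, $\|u-\tilde u\|_{C_T\calC^\theta\cap C_T^{\theta/\alpha}L^\infty}\leqslant \|u-\tilde u\|_{\calD^\theta}$ by definition, so the claimed local Lipschitz continuity follows.

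The main obstacle I anticipate is the bookkeeping in the second bracket above: the product $\nabla u\cdot\mathcal V$ and the Bony decomposition $u-\nabla u\para J^{T}(\mathcal V)=u^\sharp$ are multilinear in $(u,\mathcal V)$, so the difference $\nabla u\cdot\mathcal V-\nabla\tilde u\cdot\tilde{\mathcal V}$ has to be split into telescoping pieces $\nabla(u-\tilde u)\cdot\mathcal V+\nabla\tilde u\cdot(\mathcal V-\tilde{\mathcal V})$, and in the rough regime the second piece forces one to revisit the enhanced-drift estimate \eqref{eq:2} in its ``bilinear'' form (difference in the enhancement separately, difference in $(u,u')$ separately). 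This is routine but tedious; once it is done, \cref{prop:product}, \cref{lem:sharp}, the Schauder bounds from \cref{cor:schauder} and the interpolation in \cref{rem:nablau} combine to give the required $\overline T^\kappa$-small bounds on the interval $[T-\overline T,T]$.
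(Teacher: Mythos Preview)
Your proposal is correct and follows essentially the same strategy as the paper: exploit the fixed-point property, separate the difference into a contraction piece and a data-dependence piece via multilinearity, absorb on a short interval, then iterate. The paper organizes the telescoping slightly differently---it directly estimates $\|\nabla u\cdot\mathcal V-\nabla v\cdot\mathcal W\|_{C_T\calC^\beta}$ via $ab-cd=a(b-d)+(a-c)d$ rather than splitting at the level of $\Phi$---but this is a cosmetic difference. One small point to watch in your first bracket: the contraction estimate \eqref{eq:contraction} was proved for arguments in the \emph{same} space $\calD(\mathcal V)$, whereas you apply it with $(\tilde u,\tilde u')\in\calD(\tilde{\mathcal V})$; you must therefore account for the discrepancy $\tilde u'\para J^T(\mathcal V_1-\tilde{\mathcal V}_1)$ between the two versions of $\tilde u^\sharp$, which contributes another $C(R)\|\mathcal V-\tilde{\mathcal V}\|_{\mathcal X^\beta}$ term---harmless, but worth stating.
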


\begin{proof}
We only show the continuity for $f\in C_{T}\calC^{\epsilon}$, the case $f=\mathcal{V}_{1}^{i}$ is handled analogously. The continuity of the solution map is a bit subtle, because the space $\calD(V)$ depends on $V$. Let $u$ be the solution of the PDE for $\mathcal{V}\in\mathcal{X}^{\beta}$, $f\in\CTcalC^{\epsilon}$ and $u^{T}\in\calC^{2\theta-1}$ and $v$ the solution corresponding to the data $\mathcal{W}\in\mathcal{X}^{\beta}$, $g\in\CTcalC^{\epsilon}$ and $v^{T}\in\calC^{2\theta-1}$. By the fixed point property we have $\Phi^{T,T}(u,u')=(u,u')$ and $\Phi^{T,T}(v,v')=(v,v')$. We want to estimate $\norm{u - v}_{\mathcal D^\theta_{T,T}}$ by itself times a factor less than $1$, plus a term depending on $\norm{f-g}$, $\norm{\mathcal{V}-\mathcal{W}}$ and $\norm{u^{T}-v^{T}}$. Here we keep in mind that $u\in\mathcal D^\theta_{T,T}(V)$, whereas $v\in\mathcal D^\theta_{T,T}(W)$, but we explained the notation of $\norm{u - v}_{\mathcal D^\theta_{T,T}}$ in \cref{def:paradist}. For that purpose we estimate using the definition of the product from \cref{prop:product} and rebracketing like $ab-cd=a(b-d)+(a-c)d$, 
\begin{align*}
\MoveEqLeft
\norm{\nabla u\cdot\mathcal{V}-\nabla v\cdot\mathcal{W}}_{C_{T}\mathcal{C}^{\beta}}
\\&\lesssim (1+\norm{\mathcal{W}}_{\mathcal{X}^{\beta}})\norm{\mathcal{V}}_{\mathcal{X}^{\beta}}\norm{u-v}_{\mathcal{D}^{\theta}_{T,T}}+(1+\norm{\mathcal{W}}_{\mathcal{X}^{\beta}})\norm{\mathcal{V}-\mathcal{W}}_{\mathcal{X}^{\beta}}\norm{v}_{\mathcal{D}^{\theta}_{T,T}}\\&\qquad + \norm{\mathcal{V}}_{\calX^{\beta}}\norm{u}_{\mathcal{D}^{\theta}_{T,T}}\norm{\mathcal{V}-\mathcal{W}}_{\calX^{\beta}}.
\end{align*}
Since the solution $u$ can be bounded in terms of $u^{T},f,\mathcal{V}$ by Gronwall's inequality for locally finite measures (cf. \cite[Appendix, Theorem 5.1]{ethier}), and similarly for $v$, we conclude that
\begin{align*}
\MoveEqLeft
\norm{\nabla u\cdot\mathcal{V}-\nabla v\cdot\mathcal{W}}_{C_{T}\mathcal{C}^{\beta}}
\\&\lesssim\paren[\big]{(\norm{\mathcal V}_{\calX^{\beta}}+\norm{v}_{\mathcal{D}^{\theta}_{T,T}})(1+\norm{\mathcal W}_{\calX^{\beta}})+\norm{\mathcal V}_{\calX^{\beta}}\norm{u}_{\mathcal{D}^{\theta}_{T,T}}}\paren[\bigg]{\norm{u-v}_{\mathcal{D}^{\theta}_{T,T}}+\norm{\mathcal{V}-\mathcal{W}}_{\calX^{\beta}}}\\&\lesssim C(\norm{\mathcal V},\norm{\mathcal W},\norm{u^{T}},\norm{v^{T}},\norm{f},\norm{g})\paren[\bigg]{\norm{u-v}_{\mathcal{D}^{\theta}_{T,T}}+\norm{\mathcal{V}-\mathcal{W}}_{\calX^{\beta}}},
\end{align*} where $C=C(\norm{\mathcal V},\norm{\mathcal W},\norm{u^{T}},\norm{v^{T}},\norm{f},\norm{g})$ is a constant, that depends on the norms of the input data on $[0,T]$.
Therefore, we obtain from~\cref{schauder} together with the Schauder estimates~\cref{cor:schauder} and~\cref{rem:nablau} (recall that $u'=\nabla u$ and $v'=\nabla v$):
\begin{align*}
	&\norm{u-v}_{C_{T}\calC^{\theta}} + \norm{u-v}_{C_{T}^{\theta/\alpha}L^\infty} + \norm{u'-v'}_{C_T \calC^{\theta-1}_{\R^{d}}} \\
	&\hspace{40pt}\lesssim\norm{u^{T}-v^{T}}_{2\theta-1}+\norm{f-g}_{\CTcalC^{\epsilon}}+\norm{J^{T}(\nabla u\cdot\mathcal{V}-\nabla v\cdot\mathcal{W})}_{\CTcalC^{\theta}\cap C_{T}^{\theta/\alpha}L^{\infty}}\\
	&\hspace{40pt}\lesssim\norm{u^{T}-v^{T}}_{2\theta-1}+\norm{f-g}_{\CTcalC^{\epsilon}}+T^{\kappa_{1}}C\paren[\big]{\norm{u-v}_{\mathcal D^\theta_{T,T}}+\norm{\mathcal{V}-\mathcal{W}}_{\calX^{\beta}}},
\end{align*}
where $\kappa_{1}=\frac{\beta+\alpha-\theta}{\alpha}>0$ and where the $C_{T}^{\theta/\alpha}L^{\infty}$-norm of $u-v$ is estimated using the fixed point and an estimate as in \eqref{eq:b}.
Moreover, using the fixed point property and analogue estimates for the term $\norm{\Phi^{T,T}(u,u')^{\sharp}}_{C_{T}\calC^{2\theta-1}}$ as in the proof of \cite[Proposition 3.9]{Cannizzaro2018} using \cref{lem:sharp}, we obtain
\begin{align*}
\norm{u^{\sharp}-v^{\sharp}}_{C_{T}\calC^{2\theta-1}}&\squeeze[1]{\lesssim \norm{u^{T}-v^{T}}_{2\theta-1}+\norm{f-g}_{C_{T}\calC^{\epsilon}}+(1+C)\norm{\mathcal{V}-\mathcal{W}}_{\mathcal{X}^{\beta}}+(T^{\kappa_{2}}\vee T^{\kappa_{1}})C\norm{u-v}_{\mathcal D^\theta_{T,T}}},
\end{align*} where $\kappa_{2}=\frac{2(\alpha+\beta-\theta)}{\alpha}>0$ and $C> 0$ is again a (possibly different) constant depending on the norms of the input data.
So overall
\begin{equation*}
\norm{u-v}_{\mathcal D^\theta_{T,T}}\lesssim \norm{u^{T}-v^{T}}_{2\theta-1}+\norm{f-g}_{C_{T}\calC^{\epsilon}}+(1+C)\norm{\mathcal{V}-\mathcal{W}}_{\mathcal{X}^{\beta}}+(T^{\kappa_1} \vee T^{\kappa_2})C\norm{u-v}_{\mathcal D^\theta_{T,T}}.
\end{equation*}
Assume for the moment that $T$ is small enough so that $T^{\kappa_1} \vee T^{\kappa_2}$ times the implicit constant on the right hand side is $<1$. Then we can take the last term to the other side and divide by a positive factor, obtaining
\begin{align*}
\norm{u-v}_{\mathcal D^\theta_{T,T}}\lesssim\tilde C(\norm{u^{T}-v^{T}}_{2\theta-1}+\norm{f-g}_{C_{T}\calC^{\epsilon}}+\norm{\mathcal{V}-\mathcal{W}}_{\mathcal{X}^{\beta}}),
\end{align*} where $\tilde{C}>0$ is a constant that depends on the input data.
Thus, the map $(u^{T},f,\mathcal V)\mapsto (u,u^{\sharp})\in C_{T}\calC^{\theta}\cap C_{T}\calC^{2\theta-1}$ is locally Lipschitz continuous, which implies that the solution map is continuous with values in $C_{T}\cal C^{\theta}$.

If $T$ is such that $T^{\kappa_1} \vee T^{\kappa_2}$ times the implicit constant is $\geqslant 1$, then we apply the same estimates for $\calD$, where $\overline{T}$ is small enough, and then bound $\norm{u-v}_{C_{T}\calC^{\theta}}\leqslant\sum_{i=1}^{n}\norm{u-v}_{C_{\overline{T},T-(i-1)\overline{T}}\calC^{\theta}}$, where $n$ is the smallest integer such that $T-n\overline{T}\leqslant 0$. The same argument also works for the $C_{T}^{\theta/\alpha}L^{\infty}$-norm with $\norm{u-v}_{C_{T}^{\theta/\alpha}L^{\infty}}\lesssim\sum_{i=1}^{n}\norm{u-v}_{C_{\overline{T},T-(i-1)\overline{T}}^{\theta/\alpha}L^{\infty}}$ for the chosen $n$ and we obtain also local Lipschitz continuity of the solution map w.r.t. this norm.
\end{proof}
\end{section}

\begin{section}{Existence and uniqueness for the martingale problem}\label{sect:mthm}

Recall the definition of $\mathcal X^\beta$ from~\cref{def:roughdist}: For $\beta \in (\frac{1-\alpha}{2}, 0)$ we have $\mathcal X^\beta = C_T \calC^\beta_{\R^d}$, while for $\beta \in (\frac{2-2\alpha}{3},\frac{1-\alpha}{2}]$ the space $\mathcal X^\beta$ is the closure of
\[
	\{\calK (\eta ):= (\eta,(J^{T}(\partial_{j}\eta^{i})\reso\eta^{j})_{i,j=1,...,d}):\, \eta\in C_T C^{\infty}_{b}(\R^{d},\R^{d})\}
\]
in $\CTcalC^{\beta}_{\R^{d}}\times \CTcalC^{2\beta+\alpha-1}_{\R^{d^{2}}}$. For $V \in \mathcal X^\beta$ we define solutions to the SDE
\[
	dX_{t}=V(t,X_{t})dt + dL_{t},\quad X_{0}=x\in\R^{d},
\]
as solutions to the corresponding martingale problem.

We consider the Skorokhod space $(\Omega,\mathcal{F}):=(D([0,T],\R^{d}),\mathcal{B}(D([0,T],\R^{d})))$ with canonical filtration $(\mathcal{F}_{t})_{t\geqslant 0}$, i.e. $\mathcal{F}_{t}=\sigma(X_{s}:s\leqslant t)$ where $(X_{t})_{t\geqslant 0}$ is the canonical process with $X_{t}=\omega(t)$ for $\omega\in\Omega$.

\begin{definition}\label{def:martp}\textbf{(Martingale Problem)}\\
Let $\alpha\in (1,2]$ and $\beta\in(\frac{2-2\alpha}{3},0)$, and let $T>0$ and $V\in \mathcal{X}^{\beta}$. Then, we call a probability measure $\p$ on the Skorokhod space $(\Omega,\mathcal{F})$ a solution of the martingale problem for $(\mathcal{G}^{V},\delta_x)$, if
\begin{enumerate}
\item[\textbf{1.)}] $\p(X_{0}\equiv x)=1$ (i.e. $\p^{X_{0}}=\delta_{x}$), and
\item[\textbf{2.)}] for all $f\in C_{T}\calC^{\epsilon}$ with $\varepsilon > 0$ and for all $u^{T}\in\mathcal{C}^{3}$, the process $M=(M_{t})_{t\in [0,T]}$ is a martingale under $\p$ with respect to $(\mathcal{F}_{t})$, where
\begin{align}
M_{t}=u(t,X_{t})-u(0,x)-\int_{0}^{t}f(s,X_{s})ds
\end{align} and where $u$ solves the Kolmogorov backward equation $\mathcal{G}^{V}u=f$ with terminal condition $u(T,\cdot)=u^{T}$.
\end{enumerate} 
\end{definition}

This is a generalization of the classical notion of a weak solution, in the sense that if $V^{n}$ is a bounded and measurable function, then $(X^{n}_{t})_{t\in [0,T]}$ is a weak solution to 
\begin{align}\label{eq:Xn}
	dX_{t}^{n}=V^{n}(t,X_{t}^{n})dt+dL_{t}, \qquad X_{0}^{n}=x,
\end{align}
if and only if it solves the martingale problem of~\cref{def:martp}.

Our main result is:
\begin{theorem}\label{thm:mainthm}
Let $\alpha\in (1,2]$ and $L$ be a symmetric, $\alpha$-stable Lévy process, such that the measure $\nu$ satisfies \cref{ass}. Let $T>0$ and $\beta\in ((2-2\alpha)/3,0)$ and let $V\in \mathcal X^\beta$ be as in \cref{def:roughdist}. Then for all $x\in\R^{d}$ there exists a unique solution $\mathbb{Q}$ on $(\Omega,\mathcal F)$ of the martingale problem for $(\mathcal{G}^{V},\delta_x)$. Under $\mathbb{Q}$ the canonical process is a strong Markov process.
\end{theorem}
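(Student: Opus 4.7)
I would follow the classical Stroock--Varadhan strategy for martingale problems, adapted to the paracontrolled setting through the continuity of the Kolmogorov backward solution map (\cref{thm:rgeneq}): existence by smooth approximation, uniqueness via the PDE characterization, and the strong Markov property as a consequence of well-posedness for arbitrary initial conditions.

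\textbf{Existence.} By definition of $\calX^\beta$ there is a sequence $V^n \in C_T (C^\infty_b)^d$ with $\calK(V^n) \to \calV$ in $\calX^\beta$. For each $n$ classical theory yields a weak solution of~\eqref{eq:Xn} with law $\mathbb{Q}^n$, and by the last assertion of~\cref{prop:Banachmap} this $\mathbb{Q}^n$ is exactly the solution of the martingale problem for $(\calG^{V^n},\delta_x)$. Tightness of $(\mathbb{Q}^n)$ can be verified via Jakubowski's criterion applied to a separating family of test functions $\phi\in C^\infty_c$: letting $u^n$ solve $\calG^{V^n}u^n=0$ with terminal $\phi$, the process $u^n(\cdot,X^n_\cdot)$ is a $\mathbb{Q}^n$-martingale that can be represented as a stochastic integral against the compensated Poisson measure $\hat\pi$ of $L$, so that the Burkholder--Davis--Gundy inequality (\cite[Lemma~8.22]{peszat_zabczyk_2007}) together with the uniform-in-$n$ bounds on $u^n$ provided by \cref{thm:rgeneq} yields Aldous-type increment estimates. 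Extracting a weakly convergent subsequence $\mathbb{Q}^n\to\mathbb{Q}$, the uniform convergence $u^n\to u$ on $[0,T]\times\R^d$ (from \cref{thm:rgeneq} and the embedding $\calC^\theta\hookrightarrow C_b$ since $\theta>1$) combined with Skorokhod convergence of $X^n$ permits passage to the limit in the martingale identity for every $u^T\in\calC^{2\theta-1}$ and $f\in C_T\calC^\epsilon$; in particular this covers $u^T\in\calC^3$, as $\theta<\beta+\alpha<2$ implies $2\theta-1<3$.

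\textbf{Uniqueness.} Let $\mathbb{Q}$ be any solution. For fixed $t\in[0,T]$ and $\phi\in\calC^3$, let $u$ solve $\calG^V u=0$ on $[0,t]$ with terminal condition $u(t,\cdot)=\phi$, which exists uniquely by \cref{prop:Banachmap}. The martingale identity in \cref{def:martp} then gives $\E_\mathbb{Q}[\phi(X_t)]=u(0,x)$, a quantity independent of $\mathbb{Q}$, so all one-dimensional marginals are pinned down. To promote this to equality of finite-dimensional distributions I would pass to a regular conditional distribution $\mathbb{Q}_\omega$ of $\mathbb{Q}$ given $\calF_s$ for $s<t$ and verify, by the standard Stroock--Varadhan disintegration argument, that for $\mathbb{Q}$-a.e.\ $\omega$ the measure $\mathbb{Q}_\omega$ is a solution of the (time-shifted) martingale problem starting from $X_s(\omega)$ at time $s$. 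Uniqueness of marginals applied to the shifted problem then determines $\E_\mathbb{Q}[\phi(X_t)\mid\calF_s]$, and iteration along a time grid yields uniqueness of all finite-dimensional distributions, hence uniqueness of $\mathbb{Q}$.

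\textbf{Strong Markov and main obstacle.} The same arguments applied to initial conditions $\delta_y$ for arbitrary $y\in\R^d$ produce a family $(\mathbb{Q}_y)_{y\in\R^d}$ of unique solutions; measurability of $y\mapsto\mathbb{Q}_y$ follows from the continuity of the PDE solution map in \cref{thm:rgeneq}, and together with uniqueness these ingredients imply the strong Markov property under $\mathbb{Q}=\mathbb{Q}_x$ via the standard criterion for well-posed martingale problems (\cite[Theorem~4.4.2]{ethier}). I expect the main technical obstacle to be the tightness step, since $\|V^n\|_{L^\infty}$ is not uniformly bounded and the na\"ive estimate $|X^n_t-X^n_s|\leqslant |L_t-L_s|+\|V^n\|_{L^\infty}(t-s)$ diverges; the resolution is precisely to replace this pointwise control by the PDE-based martingale representation described above, where all required bounds are uniform in $n$ in the appropriate paracontrolled norms.
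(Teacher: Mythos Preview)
Your overall strategy---approximate by smooth $V^n$, prove tightness, pass to the limit, then uniqueness via the PDE---matches the paper. But the tightness step, which you correctly flag as the main obstacle, has a real gap in your proposal.

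You propose to apply Jakubowski's criterion by showing Aldous-type estimates for $u^n(\cdot,X^n_\cdot)$, where $u^n$ solves $\calG^{V^n}u^n=0$ with $u^n(T)=\phi$. The problem is that this martingale is \emph{not} $\phi(X^n_\cdot)$; it equals $\phi(X^n_T)$ only at the terminal time. Controlling increments of $u^n(\cdot,X^n_\cdot)$ does not yield tightness of $\phi(X^n_\cdot)$, and you also do not address the compact containment condition that Jakubowski's criterion requires in $\R^d$. The paper resolves this differently: it solves the PDE with right-hand side $f=V^{n,i}$ (this is precisely why \cref{prop:Banachmap} and \cref{thm:rgeneq} are set up to allow $f=\mathcal V_1^j$) and terminal condition $0$ at a \emph{variable} terminal time $t\leqslant T$. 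It\^o's formula then expresses $\int_r^t V^n(s,X^n_s)\,ds$ as $u^{n,t}(t,X^n_t)-u^{n,t}(r,X^n_r)$ plus a jump martingale; since $u^{n,t}(t)=0$, the first piece is controlled by $\|u^{n,t}\|_{C_t^{\theta/\alpha}L^\infty}|t-r|^{\theta/\alpha}$, uniformly in $n$, and the martingale piece is handled by \cref{lem:pimart}. This yields Kolmogorov-type moment bounds on the drift $A^n=\int_0^\cdot V^n(s,X^n_s)\,ds$ itself, hence $C$-tightness of $A^n$, and then tightness of $X^n=x+A^n+L$ follows at once---no Jakubowski, no compact containment to check separately.

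Two smaller points. First, in passing to the limit you invoke ``Skorokhod convergence of $X^n$'' to conclude convergence of $u(t,X^n_t)$ at fixed $t$; but fixed-time projections are not continuous in the $J_1$ topology. The paper uses the decomposition $X^n=x+A^n+L$ (with $A^n$ $C$-tight) to show that any limit satisfies $\mathbb Q(\Delta X_t=0)=1$ for each $t$, which is what makes the fixed-time limit legitimate. Second, your uniqueness argument solves the backward equation with terminal time $t<T$, but \cref{def:martp} only postulates the martingale property for terminal time $T$; the paper instead takes $u^T=0$ and varies $f$ (approximating $\delta^{-1}\mathbf 1_{[t,t+\delta]}(s)g(x)$) to extract $\E_{\mathbb Q}[g(X_t)]$, which stays within the stated definition. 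Your disintegration route for finite-dimensional marginals is fine, but the paper simply cites \cite[Theorem 4.4.2]{ethier} once one-dimensional marginals are pinned down.
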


To prove the theorem, we first establish some auxiliary results.

\begin{lemma}\label{lem:moments}
Let $\alpha \in (1,2)$ and let $\pi$ be the Poisson random measure of the $\alpha$-stable Lévy process $L$. We define for a multi-index $\omega \in \mathbb{N}_0^n$ with $n \in
  \mathbb{N}$:
  \[ | \omega | := \omega_1 + 2 \omega_2 + \cdots + n \omega_n . \]
  Then we have for all $C>0$ and $t>r$:
  \[ \mathbb{E} \left[ \left( \int_r^t \int_{| y | \leqslant C} | y |^2 \pi (d
     s, d y) \right)^n \right] \lesssim \sum_{\omega \in \mathbb{N}^n_0 : |
     \omega | = n} \prod_{i = 1}^n \left( (t - r) \int_{| y | \leqslant C} | y
     |^{2 i} \mu (d y) \right)^{\omega_i} . \]
\end{lemma}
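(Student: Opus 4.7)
The plan is to proceed by induction on $n$, deriving a recursive identity for $\mathbb{E}[I_t^n]$ (where $I_t := \int_r^t \int_{|y| \leqslant C} |y|^2 \pi(ds, dy)$) from Itô's formula for pure-jump semimartingales and then identifying the resulting combinatorial sum with the one in the statement. Observe first that $I$ is a non-negative, increasing, pure-jump process, and since $\alpha < 2$, the polar decomposition of $\mu$ from~\eqref{eq:mu} gives $\int_{|y| \leqslant C} |y|^{2k} \mu(dy) < \infty$ for every $k \geqslant 1$, because the integrand behaves like $\rho^{2k - \alpha - 1}$ near the origin and $2k > \alpha$. Writing $\kappa_k := (t-r) \int_{|y| \leqslant C} |y|^{2k} \mu(dy)$, these are precisely the factors appearing on the right-hand side of the claim.

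The key recursion is
\begin{align*}
\mathbb{E}[I_t^n] = \sum_{k=1}^n \binom{n}{k} \int_r^t \mathbb{E}[I_s^{n-k}]\, ds \int_{|y| \leqslant C} |y|^{2k}\mu(dy),
\end{align*}
obtained by applying the Itô formula to $s \mapsto I_s^n$: since $I$ jumps by $|y|^2$ at each jump of $L$ of size $|y| \leqslant C$,
\begin{align*}
I_t^n = \int_r^t \int_{|y|\leqslant C} \bigl((I_{s-}+|y|^2)^n - I_{s-}^n\bigr) \pi(ds,dy) = \sum_{k=1}^n \binom{n}{k} \int_r^t \int_{|y|\leqslant C} I_{s-}^{n-k} |y|^{2k} \pi(ds,dy),
\end{align*}
and taking expectations using the compensation formula for non-negative integrands. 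To sidestep any a priori finiteness issue, I would first replace $\pi$ by its restriction to $\{|y| \geqslant 1/N\}$, on which the intensity is finite and all moments of $I_t$ are automatically bounded, carry out the induction for this truncation, and then pass $N \to \infty$ by monotone convergence on both sides of the estimate.

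For the induction itself, the base case $n=1$ reads $\mathbb{E}[I_t] = \kappa_1$ and is immediate from compensation. For the inductive step, the hypothesis gives $\mathbb{E}[I_s^{n-k}] \lesssim \sum_{\omega': |\omega'|=n-k} \prod_i ((s-r) \int_{|y| \leqslant C} |y|^{2i} \mu(dy))^{\omega'_i}$; each term depends on $s$ only through the power $(s-r)^{\omega'_1+\cdots+\omega'_{n-k}}$, so the $ds$-integral contributes an extra factor $(t-r)^{\omega'_1+\cdots+\omega'_{n-k}+1}$ up to an $n$-dependent constant. Multiplying by $\int_{|y|\leqslant C} |y|^{2k}\mu(dy)$ regroups each term into $\prod_i \kappa_i^{\omega'_i} \cdot \kappa_k = \prod_i \kappa_i^{\omega_i}$ with $\omega := \omega' + e_k$ (embedding $\omega' \in \mathbb{N}_0^{n-k}$ into $\mathbb{N}_0^n$ by padding with zeros and letting $e_k$ denote the $k$-th basis vector), and since $|\omega| = |\omega'| + k = n$ (with $|\cdot|$ in the weighted sense of the statement), summing over $k$ and $\omega'$ produces the required sum over multi-indices $\omega \in \mathbb{N}_0^n$ with $|\omega| = n$. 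The only real obstacle is the a priori finiteness needed for the compensation step, which the truncation argument handles cleanly; all remaining prefactors are purely combinatorial and depend only on $n$, so they are absorbed into the implicit constant in $\lesssim$.
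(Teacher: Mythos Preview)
Your proof is correct and takes a genuinely different route from the paper. The paper works with the moment generating function $\Phi(\lambda) = \mathbb{E}[\exp(\lambda I_t)]$, computes it explicitly via Campbell's formula as $\exp((t-r)\int_{|y|\leqslant C}(e^{\lambda|y|^2}-1)\mu(dy))$, and then proves by induction on $n$ that $\Phi^{(n)}(\lambda)$ has the form $\Phi(\lambda)\sum_{|\omega|=n} c(n,\omega)\prod_i((t-r)\int |y|^{2i}e^{\lambda|y|^2}\mu(dy))^{\omega_i}$; evaluation at $\lambda=0$ gives the lemma. The inductive step there produces new multi-indices via $\omega\mapsto\omega+e_1$ (from $\Phi'\cdot\ldots$) and $\omega\mapsto\omega-e_j+e_{j+1}$ (from the Leibniz rule on the product), a different combinatorial recursion from your $\omega'\mapsto\omega'+e_k$.

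Your approach via It\^o's formula and the compensation identity for $\mathbb{E}[I_t^n]$ is more direct and avoids the detour through moment generating functions; the truncation $\{|y|\geqslant 1/N\}$ cleanly handles the a~priori finiteness needed to justify compensation. The paper's approach has the advantage that Campbell's formula gives $\Phi$ in closed form, so no truncation or monotone-convergence argument is needed, and the induction is purely algebraic differentiation of an explicit expression. Both arguments reduce to the same pool of monomials $\prod_i\kappa_i^{\omega_i}$ with $|\omega|=n$, and in both cases the combinatorial constants depend only on $n$ and are absorbed into $\lesssim$.
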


We give the proof in~\cref{sec:campbell-appendix}.

\begin{lemma}\label{lem:pimart}
Let $\alpha \in (1,2)$, let $\theta\in (1,\alpha)$ and $u\in C_{T}\calC^{\theta}\cap C_{T}^{\theta/\alpha}L^{\infty}$, and let $\rho\in 2\N$. Let moreover $\hat{\pi}$ be the compensated Poisson random measure of the $\alpha$-stable Lévy process $L$. Then we have uniformly in $0 \leqslant r \leqslant t \leqslant T$:
\begin{align*}
\E\bigg[\abs[\bigg]{\int_{r}^{t} \int_{\R^{d}}\paren[\big]{(u(t,X_{s-}+y)-u(t,X_{s-}))-(u(s,X_{s-}+y)-u(s,X_{s-}))}\hat{\pi}(d s,d y)}^\rho\bigg]\lesssim\abs{t-r}^{\rho\theta/\alpha}.
\end{align*}
\end{lemma}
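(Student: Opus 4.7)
The plan is to reduce to a deterministic integral via a Burkholder--Davis--Gundy / Kunita-type inequality for compensated Poisson stochastic integrals, and then to balance the time and space regularity of $u$ against the $\alpha$-stable scaling of $\mu$.

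First, write the integrand as $F(s,y)=g_s(X_{s-}+y)-g_s(X_{s-})$ with $g_s(x):=u(t,x)-u(s,x)$. The hypotheses $u\in C_{T}\calC^{\theta}\cap C_{T}^{\theta/\alpha}L^{\infty}$ give $\|g_s\|_{L^\infty}\lesssim |t-s|^{\theta/\alpha}$ and $\|g_s\|_{\calC^\theta}\lesssim 1$, so by Besov interpolation $\|g_s\|_{\calC^\beta}\lesssim |t-s|^{(\theta-\beta)/\alpha}$ for every $\beta\in[0,\theta]$. Combining the resulting Hölder estimate with the trivial $L^\infty$ bound yields the $X_{s-}$-independent pointwise inequality
$$|F(s,y)|\lesssim \min\bigl(|t-s|^{\theta/\alpha},\ |t-s|^{(\theta-\beta)/\alpha}|y|^\beta\bigr),\qquad \beta\in(0,1).$$

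Next, I would invoke the Kunita inequality for compensated Poisson integrals: for even $\rho\geqslant 2$,
$$\E\Bigl[\Bigl|\int_r^t\!\int F\,\hat\pi\Bigr|^\rho\Bigr]\lesssim\E\Bigl[\Bigl(\int_r^t\!\int|F|^2\mu(dy)\,ds\Bigr)^{\rho/2}\Bigr]+\E\Bigl[\int_r^t\!\int|F|^\rho\mu(dy)\,ds\Bigr],$$
where the first term comes from the BDG inequality (Peszat--Zabczyk, Lemma 8.22) applied to the quadratic variation $[M]_t=\int_0^t\!\int F^2 d\pi$, and the second from expanding $\E\bigl(\int F^2 d\pi\bigr)^{\rho/2}$ à la \cref{lem:moments} in terms of $\int|F|^{2i}d\mu$. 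Since the pointwise bound on $|F|$ is deterministic, both expectations collapse to deterministic integrals. Fix $\beta\in(\alpha/2,\,1)$, which is non-empty because $\alpha<2$. For each $p\in\{2,\rho\}$, split the $\mu$-integral at the threshold $R:=|t-s|^{1/\alpha}$ (the typical jump scale of $L$ over time $|t-s|$) and use $\mu(\{|y|>R\})\sim R^{-\alpha}$ together with $\int_{|y|\leqslant R}|y|^{p\beta}\mu(dy)\sim R^{p\beta-\alpha}$ (valid because $p\beta>\alpha$):
$$\int|F(s,y)|^p\mu(dy)\lesssim |t-s|^{p\theta/\alpha}|t-s|^{-1}+|t-s|^{p(\theta-\beta)/\alpha}|t-s|^{(p\beta-\alpha)/\alpha}\lesssim |t-s|^{p\theta/\alpha-1},$$
so that $\int_r^t\!\int|F|^p\mu(dy)\,ds\lesssim |t-r|^{p\theta/\alpha}$. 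Raising the $p=2$ bound to the power $\rho/2$ and using the $p=\rho$ bound directly, both terms in the Kunita inequality are $\lesssim|t-r|^{\rho\theta/\alpha}$, which is the desired estimate.

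The main delicate point is the scale matching above. To make $|y|^{p\beta}$ $\mu$-integrable near the origin we must take $\beta>\alpha/p$; but with a \emph{fixed} cutoff such as $R=1$ the small-jump contribution alone would only produce $\int_r^t\!\int|F|^p\mu(dy)\,ds\lesssim|t-r|^{1+p(\theta-\beta)/\alpha}$, and since $\beta>\alpha/p$ the exponent $1+p(\theta-\beta)/\alpha$ is strictly smaller than $p\theta/\alpha$, giving a strictly weaker bound than what is required. Only the $\alpha$-stable natural scale $R=|t-s|^{1/\alpha}$ forces the big-jump piece (gaining $|t-s|^{-1}$) and the small-jump piece (losing the same factor) to balance at the common exponent $|t-s|^{p\theta/\alpha-1}$, and it is precisely this cancellation that delivers the sharp final bound.
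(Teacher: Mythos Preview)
Your argument is correct and follows the same overall strategy as the paper: apply a BDG-type inequality, exploit the two regularities of $u$ (time-H\"older in $L^\infty$ and space-H\"older at fixed time), and split the jump integral at the $\alpha$-stable scale so that the large- and small-jump contributions balance. The execution, however, differs in two respects that are worth noting.

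First, the paper stays with the random Poisson measure $\pi$ after BDG: it bounds $\E\bigl[(\int_r^t\int F^2\,\pi)^{\rho/2}\bigr]$ directly, splitting at the \emph{fixed} scale $C=(t-r)^{1/\alpha}$ so that the large-jump count $\int_r^t\int_{|y|>C}\pi$ is a single Poisson variable with parameter $\simeq 1$, and handling the small-jump moments via \cref{lem:moments} (Campbell's formula). You instead pass immediately to the deterministic intensity $\mu(dy)\,ds$ via Kunita's inequality, which lets you do the splitting pointwise in $s$ at the time-dependent scale $R=|t-s|^{1/\alpha}$ and reduces everything to elementary one-variable integrals. Your route avoids an explicit appeal to \cref{lem:moments}, though your parenthetical justification of Kunita ``\`a la \cref{lem:moments}'' is a bit loose: the Campbell expansion produces all the intermediate moments $\int|F|^{2i}d\mu\,ds$, not just $i=1$ and $i=\rho/2$. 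Either cite Kunita's inequality as a black box, or check (as is straightforward with your bound $\int|F|^{2i}d\mu\,ds\lesssim|t-r|^{2i\theta/\alpha}$) that every term in the Campbell expansion carries the same exponent $|t-r|^{\rho\theta/\alpha}$.

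Second, the paper uses the mean value theorem and the time regularity of $\nabla u$ from \cref{rem:nablau} (effectively your interpolation at $\beta=1$), whereas you interpolate at a general $\beta\in(\alpha/2,1)$. Both work; your choice is slightly more flexible and sidesteps the separate interpolation argument for $\nabla u$.
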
 

\begin{proof}
To abbreviate the notation we write $\Delta_y u(s,x):= u(s,x+y) - u(s,x)$. By the Burkholder-Davis-Gundy inequality together with \cite[Lemma 8.21]{peszat_zabczyk_2007} we get for
any $\rho \geqslant 1$ and for $C > 0$ to be chosen later
\begin{align}\label{eq:piest}
  \MoveEqLeft[2] \mathbb{E} \left[ \left| \int_r^t \int_{\mathbb{R}^d} (\Delta_y u (t, X_{s -}) - \Delta_y u (s, X_{s -})) \hat{\pi} (d
  s, d y) \right|^{\rho} \right]\nonumber\\
  & \lesssim \mathbb{E} \left[ \left| \int_r^t \int_{\mathbb{R}^d} (\Delta_y u (t, X_{s -}) - \Delta_y u (s, X_{s -}))^2
  \pi (d s, d y) \right|^{\rho / 2} \right]\nonumber\\
  & \lesssim \mathbb{E} \left[ \left| \int_r^t \int_{| y | \leqslant C} (\Delta_y u (t, X_{s -}) - \Delta_y u (s, X_{s -}))^2 \pi (d s, d y) \right|^{\rho / 2} \right]\nonumber\\ 
  &\quad +\mathbb{E} \left[ \left| \int_r^t \int_{| y | > C} (\Delta_y u (t, X_{s -}) - \Delta_y u (s, X_{s -}))^2 \pi (d s,
  d y) \right|^{\rho / 2} \right] .
\end{align}
Since $\pi$ is a positive measure, the second term on the right hand side is bounded by
\begin{align*}
  \MoveEqLeft[6] \mathbb{E} \left[ \left| \int_r^t \int_{| y | > C} (\Delta_y u (t, X_{s -}) - \Delta_y u (s, X_{s -}))^2 \pi (d s, d y)
  \right|^{\rho / 2} \right]\\
  & \lesssim | t - r |^{\rho\theta/\alpha} \| u
  \|_{C^{\frac{\theta}{\alpha}}_T L^{\infty}}^{\rho} \mathbb{E} \left[ \left|
  \int_r^t \int_{| y | > C} \pi (d s, d y) \right|^{\rho / 2} \right] .
\end{align*}
The integral inside the expectation is a Poisson distributed random variable with parameter $(t
- r)  \mu (\{ y : | y | > C \}) \simeq (t-r) C^{-\alpha}$. This motivates the choice $C= (t - r)^{1 / \alpha}$, for which this term is of the claimed order. For the first term on the right hand side of \eqref{eq:piest}, we estimate by the mean value theorem and using the time regularity of $\nabla u$ (cf. also \cref{rem:nablau}):
\begin{align*}
  \MoveEqLeft[6] \mathbb{E} \left[ \left| \int_r^t \int_{| y | \leqslant C} (\Delta_y u (t, X_{s -}) - \Delta_y u (s, X_{s -}))^2 \pi (d
  s, d y) \right|^{\rho / 2} \right]\\
  & \lesssim | t - r |^{\rho(\theta - 1)/\alpha } \| \nabla u
  \|_{C_T^{(\theta - 1)/\alpha} L^{\infty}}^{\rho} \mathbb{E} \left[
  \left| \int_r^t \int_{| y | \leqslant C} | y |^2 \pi (d s, d y)
  \right|^{\rho / 2} \right] .
\end{align*}
Now  by \cref{lem:moments} and by the choice $C=(t-r)^{1/\alpha}$, we obtain
\[ \mathbb{E} \left[ \left( \int_r^t \int_{| y | \leqslant C} | y |^2 \pi (d
     s, d y) \right)^{\rho/2} \right] \lesssim \sum_{\omega \in \mathbb{N}^n_0 : |
     \omega | = \rho/2} \prod_{i = 1}^{\rho/2} \left( (t - r) \int_{| y | \leqslant C} | y
     |^{2 i} \mu (d y) \right)^{\omega_i}\lesssim\abs{t-r}^{\rho/\alpha}, \]
where we used that $\int_{\abs{y}\leqslant C}\abs{y}^{k}\mu(dy)\simeq C^{k-\alpha}$ for $k\geqslant 2$. Together this yields for any $\rho\in 2\N$
\begin{align*}
\MoveEqLeft[6]
  \mathbb{E} \left[ \left| \int_r^t \int_{\mathbb{R}^d}(\Delta_y u (t, X_{s -}) - \Delta_y u (s, X_{s -})) \hat{\pi} (d
  s, d y) \right|^{\rho} \right]
 \\& \lesssim \abs{t-r}^{\rho\theta/\alpha}+\abs{t-r}^{\rho(\theta-1)/\alpha}\abs{t-r}^{\rho/\alpha}\simeq\abs{t-r}^{\rho\theta/\alpha}.
\end{align*}
\end{proof}

\begin{corollary}\label{lem:hd}
In the setting of Theorem~\ref{thm:mainthm}, let $(V^{n})_{n \in \N} \subset C_{T}C^{\infty}_{b}(\R^d,\R^d)$ be a smooth approximation with $(V^{n},\calK(V^{n}))\to \mathcal{V}$ in $\calX^{\beta}$. Let $(X^{n}_{t})_{t\in [0,T]}$ be the strong solution of the SDE
\begin{align*}
dX_{t}^{n}=V^{n}(t,X^{n}_{t})dt+dL_{t},\qquad X_{0}=x\in \R^{d}.
\end{align*}
Let $\theta\in ((2-\beta)/2,\alpha+\beta)$ and $\rho\in 2\N$. Then, we have uniformly in $n\in \N$, and $0\leqslant r \leqslant t \leqslant T$:
\begin{align}\label{eq:h1}
\sup_{n}\E\bigg[\abs[\bigg]{\int_{r}^{t}V^{n}(s,X^{n}_{s})ds}^{\rho}\bigg]\lesssim\abs{t-r}^{\theta\rho/\alpha}.
\end{align}
\end{corollary}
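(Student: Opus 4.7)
We will use It\^o's formula applied to a well-chosen PDE solution and combine it with \cref{lem:pimart} to control the jump-martingale increment. For each $j \in \{1,\dots,d\}$, let $u^n \in \mathcal{D}^\theta_{T,T}$ denote the paracontrolled solution of $\mathcal{G}^{V^n} u^n = V^{n,j}$ with terminal $u^n(T, \cdot) = 0$, provided by \cref{prop:Banachmap} (with $f = \mathcal{V}_1^{n,j}$). By the local Lipschitz continuity of the solution map (\cref{thm:rgeneq}) and the convergence $(V^n, \mathcal{K}(V^n)) \to \mathcal{V}$ in $\mathcal{X}^\beta$, the family $(u^n)_n$ is uniformly bounded in $\mathcal{D}^\theta_{T,T}$, and in particular uniformly in $C_T \calC^\theta \cap C_T^{\theta/\alpha} L^\infty$, with a constant depending only on $\|\mathcal{V}\|_{\mathcal{X}^\beta}$, $T$, and $\theta$.

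Since $V^n \in C_T C^\infty_b$, the solution $u^n$ is classically smooth, and It\^o's formula applied to $u^n(\cdot, X^n_\cdot)$ on $[r, t]$ yields
\begin{equation*}
\int_r^t V^{n,j}(s, X_s^n)\,ds = u^n(t, X_t^n) - u^n(r, X_r^n) - (M_t^n - M_r^n),
\end{equation*}
where $M_t^n - M_r^n := \int_r^t \int_{\R^d}(u^n(s, X^n_{s-} + y) - u^n(s, X^n_{s-}))\hat\pi(ds, dy)$. Writing $u^n(t, X_t^n) - u^n(r, X_r^n) = [u^n(t, X_r^n) - u^n(r, X_r^n)] + [u^n(t, X_t^n) - u^n(t, X_r^n)]$, the first bracket is bounded pathwise by $\|u^n\|_{C_T^{\theta/\alpha} L^\infty} |t-r|^{\theta/\alpha}$. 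For the second bracket, It\^o applied to the time-frozen function $x \mapsto u^n(t, x)$ along $X^n$ on $[r, t]$ gives $u^n(t, X_t^n) - u^n(t, X_r^n) = \int_r^t (A u^n(t, \cdot) + V^n(s, \cdot) \cdot \nabla u^n(t, \cdot))(X_s^n)\,ds + \tilde M^{n,t}_{r,t}$, where $\tilde M^{n,t}_{r,t} := \int_r^t \int(u^n(t, X^n_{s-} + y) - u^n(t, X^n_{s-})) \hat\pi(ds, dy)$. Applying \cref{lem:pimart} to $u = u^n$ bounds $\|\tilde M^{n,t}_{r,t} - (M_t^n - M_r^n)\|_{L^\rho} \lesssim |t-r|^{\theta/\alpha}$.

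\textbf{Main obstacle.} The remaining task is to bound the drift integral $\int_r^t (A u^n(t, \cdot) + V^n(s, \cdot) \cdot \nabla u^n(t, \cdot))(X_s^n)\,ds$ in $L^\rho$ by $\lesssim |t-r|^{\theta/\alpha}$, uniformly in $n$. The integrand only has negative Besov regularity (the product $V^n \cdot \nabla u^n$ is bounded in $C_T \calC^\beta$ uniformly by \cref{prop:product}), so no pointwise $L^\infty$-estimate is available. Using the PDE identity $A u^n(t, \cdot) + V^n(t, \cdot) \cdot \nabla u^n(t, \cdot) = V^{n,j}(t, \cdot) - \partial_t u^n(t, \cdot)$ at the frozen time $t$, the task reduces to $L^\rho$-bounds on time-integrals $\int_r^t \phi(X_s^n)\,ds$ for fixed-time distributions $\phi \in \calC^\gamma$ with $\gamma > -\alpha$, which are controlled via the Schauder estimates from \cref{cor:schauder} together with the regularizing action of the L\'evy semigroup on negative-regularity Besov distributions; combining all pieces yields the claimed bound.
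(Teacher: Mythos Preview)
Your decomposition isolates the right difficulty, but the final step is a genuine gap, not a technicality. After the second It\^o formula you are left with
\[
\int_r^t \bigl(A u^n(t,\cdot)+V^n(s,\cdot)\cdot\nabla u^n(t,\cdot)\bigr)(X_s^n)\,ds,
\]
and you propose to rewrite the integrand via the PDE at the frozen time $t$ and then bound $\int_r^t \phi(X_s^n)\,ds$ for $\phi\in\calC^\gamma$ with $\gamma>-\alpha$ using ``Schauder estimates together with the regularizing action of the L\'evy semigroup''. Two problems. First, this is circular: the standard way to control $\E\bigl[\lvert\int_r^t \phi(X_s^n)\,ds\rvert^\rho\bigr]$ for $\phi$ of negative regularity is precisely to solve $\mathcal{G}^{V^n}w=\phi$ and apply It\^o, which reproduces the same spatial-increment obstruction you are trying to dispose of. \Cref{cor:schauder} is an estimate on the $J^T$ operator acting on functions; it says nothing about expectations of additive functionals of $X^n$, and there is no uniform-in-$n$ density or Krylov-type bound available here to substitute. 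Second, even granting such a lemma, the terms you need to feed into it are not under control: $\partial_t u^n(t,\cdot)$ is bounded uniformly in $n$ only in $\calC^{\theta-\alpha}$ (via the mild equation), while the cross term $(V^n(s,\cdot)-V^n(t,\cdot))\cdot\nabla u^n(t,\cdot)$ involves the time increment of $V^n$, for which you have no uniform H\"older modulus --- only uniform boundedness in $C_T\calC^\beta$.

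The paper sidesteps this entirely by a different choice of auxiliary function: it solves $\mathcal{G}^{V^n}u^{n,t}=V^n$ with terminal condition imposed at time $t$ rather than $T$, i.e.\ $u^{n,t}(t,\cdot)=0$. Then $u^{n,t}(t,X_t^n)-u^{n,t}(r,X_r^n)=u^{n,t}(t,X_r^n)-u^{n,t}(r,X_r^n)$ is a \emph{pure time increment} and the spatial part vanishes identically; the martingale term likewise reduces directly to the form handled by \cref{lem:pimart}. The price is that the paracontrolled structure still uses $J^T(V^n)$ (so that the enhancement $\calK(V^n)$ is the right object), which creates a blow-up in $u^{n,t,\sharp}$ at the terminal time; the paper handles this with a singular norm $\mathcal{M}_t^{(\theta-1)/\alpha}\calC^{2\theta-1}$. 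That is the missing idea in your attempt.
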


\begin{proof}
Let $t\in (0,T]$ and consider the solution $u^{n,t}\in C_{t}C^\infty_b(\R^d,\R^d)$ of the system of equations
\begin{align*}
\mathcal{G}^{V^{n}}u^{n,t,i}=V^{n,i},\qquad u^{n,t,i}(t,\cdot)=0,\qquad\text{for } i=1,...,d.
\end{align*}
For $\beta \in (\frac{2-2\alpha}{3},\frac{1-\alpha}{2}]$ this equation is not exactly of the same type as the equation in~\cref{thm:rgeneq}, because we prescribe the terminal condition at time $t\leqslant T$ and not in $T$. We still use the paracontrolled ansatz $u^{n,t}=(u^{n,t})'\para J^{T}(V^n)+(u^{n,t})^{\sharp}$, i.e. we do not replace $J^T V^n$ by $J^t V^n$, because as $n \to \infty$ we only control $\nabla J^T V^n\reso V^n$ but not $\nabla J^t V^n \reso V^n$. This means there is a blowup of $\|(u^{n,t})^\sharp(s) \|_{2\theta-1}$ as $s \to t$. We discuss below how to deal with this singularity, and we will see that
\begin{align}\label{eq:bound}
\sup_{n\in \N, t\in[0,T]}\norm{u^{n,t}}_{C_{t}\calC^{\theta}_{\R^{d}}} + \norm{u^{n,t}}_{C_{t}^{\theta/\alpha}L^{\infty}_{\R^{d}}}<\infty.
\end{align}

Let first $\alpha \in (1,2)$. Then we apply Itô's formula to $u^{n,t}(t,X^{n}_{t})-u^{n,t}(r,X^{n}_{r})$ and we use that $X^{n}$ solves the SDE with drift $V^{n}$ and that $\mathcal{G}^{V^{n}}u^{n}=V^{n}$ to obtain 
\[
\int_{r}^{t}V^{n}(s,X^{n}_{s})ds=u^{n,t}(t,X^{n}_{t})-u^{n,t}(r,X^{n}_{r})+\int_{r}^{t}\int_{\R^{d}}\paren[\big]{u^{n,t}(s,X^{n}_{s-}+y)-u^{n,t}(s,X^{n}_{s-})}\hat{\pi}(ds,dy).
\] 
As $u^{n,t}(t) = 0$ and by~\eqref{eq:bound} we obtain
\begin{align*}
\abs{u^{n,t}(t,X^{n}_{t})-u^{n,t}(r,X^{n}_{r})}=\abs{u^{n,t}(t,X^{n}_{r})-u^{n,t}(r,X^{n}_{r})} \leqslant\abs{t-r}^{\theta/\alpha}\norm{u^{t}}_{C_{t}^{\theta/\alpha}L^{\infty}} \lesssim \abs{t-r}^{\theta/\alpha}.
\end{align*}
Using once more that $u^{n,t}(t) = 0$, we obtain from~\cref{lem:pimart}:
\begin{align*}
&\E\bigg[\abs[\bigg]{\int_{r}^{t}\int_{\R^{d}\setminus\{0\}}\paren[\big]{u^{n,t}(s,X^{n}_{s-}+y)-u^{n,t}(s,X^{n}_{s-})}\hat{\pi}(ds,dy)}^{\rho}\bigg]\nonumber\\&
=\squeeze[1]{\E\bigg[\abs[\bigg]{\int_{r}^{t}\int_{\R^{d}\setminus\{0\}}\paren[\big]{(u^{n,t}(s,X^{n}_{s-}+y)-u^{n,t}(s,X^{n}_{s-})-(u^{n,t}(t,X^{n}_{s-}+y)-u^{n,t}(t,X^{n}_{s-}))}\hat{\pi}(ds,dy)}^{\rho}\bigg]}\\&\lesssim\abs{t-r}^{\theta\rho/\alpha},
\end{align*}
so~\eqref{eq:h1} holds for $\alpha \in (1,2)$. For $\alpha=2$ the argument is essentially the same, except much easier: Then we only have to replace the jump martingale $\int_{r}^{t}\int_{\R^{d}}\paren[\big]{u^{n,t}(s,X^{n}_{s-}+y)-u^{n,t}(s,X^{n}_{s-})}\hat{\pi}(ds,dy)$ by $\int_{r}^{t}\nabla u^{n,t}(s,X_{s}^{n})dB_{s}$ and apply the Burkholder-Davis-Gundy inequality.

Therefore, the proof is complete once we show~\eqref{eq:bound}. For that purpose we introduce the singular spaces
\begin{align*}
	\mathcal{M}_{t}^{\sigma}\calC^{\gamma}:=\{f\in C([0,t],\mathcal{S}')\mid s\mapsto(t-s)^{\sigma}f(s)\in C_{t}\calC^{\gamma}\},
\end{align*} 
and we adapt the definition of paracontrolled distributions by requiring  $u^{n,t} = (u^{n,t})'\para J^{T}(V^n)+(u^{n,t})^{\sharp}$, $u^{n,t}\in C_{t}\calC^{\theta}\cap C_{t}^{\theta/\alpha}L^{\infty}$, $(u^{n,t})'\in C_{t}\calC^{\theta}$, $(u^{n,t})^{\sharp}\in C_{t}\calC^{\theta}\cap\mathcal{M}_{t}^{(\theta-1)/\alpha}\calC^{2\theta-1}$. Since the blow-up $(\theta-1)/\alpha$ is less than $1$, we can then use techniques for paracontrolled distributions with such singularities (see e.g.~\cite[Section~6]{Gubinelli2017KPZ}) to see that the paracontrolled norm of $u^{n,t}$ is bounded in $n$ and $t$, so in particular~\eqref{eq:bound} holds.
\end{proof}

\begin{proof}[Proof of \cref{thm:mainthm}]
Let $(V^{n})_{n \in \N} \subset C_{T}C^{\infty}_{b}(\R^d,\R^d)$ be such that $(V^{n},\calK(V^{n}))\to \mathcal{V}$ in $\calX^{\beta}$ and let $X^n$ be the unique strong solution of the SDE
\begin{align}
dX^{n}_{t}=V^{n}(t,X^{n}_{t})dt+dL_{t},\qquad X^{n}_{0}=x.
\end{align}

To prove the existence of a solution to the martingale problem for $(\mathcal{G}^{V},\delta_{x})$ we follow the usual strategy: We show tightness of $(X^n)_{n \in \N}$, and then we show that every limit point solves the martingale problem for $(\mathcal{G}^{V},\delta_{x})$. Then we show that the solution to that martingale problem is unique in law, and therefore $(X^n)$ converges weakly.

\textit{Step 1: Tightness of $(\p^{X^{n}})$ on $D([0,T],\R^{d})$}.\\
We apply \eqref{eq:h1} from \cref{lem:hd} for $\rho\in 2\N$ large enough so that $\theta\rho/\alpha > 1$, which shows that the drift term $A^{n}:=\int_{0}^{\cdot}V^{n}(s,X_{s}^{n})ds$ satisfies Kolmogorov's tightness criterion. Therefore, $(A^{n})$ is tight in $C([0,T],\R^{d})$ and thus in particular $C$-tight in $D([0,T],\R^{d})$ (meaning that every limit point is continuous). By \cite[Corollary VI.3.33]{shiryaev}, we thus obtain the tightness of the tuple $(A^{n},L)$ and of $X^{n}=x+A^ {n}+L$.

\textit{Step 2: Any weak limit solves the martingale problem for $(\mathcal{G}^{V},\delta_{x})$}.\\
We consider a weakly convergent subsequence, also denoted by $(\p^{X^{n}})$, and we write $\Q$ for its limit. Let $X$ be the canonical process on $D([0,T],\R^d)$ and let $\E_{n}[\cdot]$ (resp. $\E_\Q[\cdot]$) denote integration w.r.t.\ $\p^{X^{n}}$ (resp. $\mathbb{Q}$). Let $f\in C_{T}\calC^{\epsilon}$ and $u^{T}\in\calC^{3}$, and let $(f^n)_{n \in \N} \subset C_T C^\infty_b$ be such that $f^n$ converges to $f$ in $C_T \calC^\varepsilon$. Let $u^{n}$ be the solution of $\mathcal{G}^{V^{n}}u^{n}=f^n$ with terminal condition $u^{n}(T,\cdot)=u^{T}$. Since $f^n$ and $V^n$ are smooth we have $u^n \in C^{1,2}([0,T]\times \R^d)$ and $u^n$ is a strong solution of the Kolmogorov backward equation. We can thus apply Ito's formula for c\`adl\`ag processes to $u^{n}(t,X_{t})$ under the measure $\p^{X_n}$ and obtain as the operators $-\La$ and $A$ from \eqref{eq:functional} agree on $C^{\infty}_{b}$ (and in fact $u^{n}_{t}\in C^{\infty}_{b}$), that in the jump case $\alpha\in (1,2)$
\begin{align*}
M^{n}_{t}:=& \ u^{n}(t,X_{t})-u^{n}(0,x)-\int_{0}^{t}f^n(s,X_{s})ds\\
=&\ u^{n}(t,X_{t})-u^{n}(0,x)-\int_{0}^{t}\mathcal{G}^{V^{n}}u^{n}(s,X_{s})ds\\
=&\ \int_{0}^{t}\int_{\R^{d}}\paren[\big]{u^{n}(r,X_{r-}+y)-u^{n}(r,X_{r-})}\hat{\pi}(dr,dy)
\end{align*} 
is a martingale in the canonical filtration. Indeed, $M^n$ is a local martingale because it is a stochastic integral against a compensated Poisson random measure, and it is a true martingale because $u^{n}(s,X_{s-}+y)-u^{n}(s,X_{s-})$ is square-integrable w.r.t. $\p\otimes dr\otimes\mu$, where we use the boundedness of $u^{n}$ for the big jump part and the boundedness of $\nabla u^{n}$ for the small jump part. In the Brownian case ($\alpha=2$) we have $M^{n}=\int_{0}^{\cdot}\nabla u^{n}(s,X_{s}^{n})dB_{s}$, which is a martingale because $\nabla u^n$ is bounded.

Let now $u$ be the solution to $\mathcal G^V u =f$ with terminal condition $u(T) = u^T$. By the continuity of the solution map, $(u^{n})$ converges to $u$ in the spaces $C_{T}\calC^{\theta}$ and $C_{T}^{\theta/\alpha}L^{\infty}$, for $\theta\in ((2-\beta)/\beta,\alpha+\beta)$. We show that $(M_{t})_{t\in [0,T]}$ is a martingale under $\mathbb{Q}$, where
\begin{align}\label{eq:show}
M_{t}=u(t,X_{t})-u(0,x)-\int_{0}^{t}f(s,X_{s})ds.
\end{align}
For that purpose let $0\leqslant r\leqslant t \leqslant T$ and let $F:D([0,r],\R^{d})\to\R$ be continuous and bounded. Since $M^{n}$ is a martingale under $\p^{X^{n}}$, we have
\begin{align}\label{eq:weknow}
\E_{n}[(M^{n}_{t}-M^{n}_{r})F((X_{u})_{u\leqslant r})]=0.
\end{align}
We define for $x\in D:=D([0,T],\R^d)$
\begin{align*}
M^{n}_{r,t}(x):=\paren[\bigg]{u^{n}(t,x(t))-u^{n}(r,x(r))-\int_{r}^{t}f^n(u,x(u))du},
\end{align*} and $M_{r,t}(x)$ analogously with $u^{n}, f^n$ replaced by $u,f$. We further define $M^{n}_{0,t}(x)=:M^{n}_{t}(x)$ and $M_{0,t}(x)=:M_{t}(x)$. We want to let $n\to\infty$ in \eqref{eq:weknow}. Therefore, we first note that $\sup_{x\in D}\abs{M^{n}_{t}(x)-M_{t}(x)}\to 0$ for $n\to\infty$, by the convergence of $(u^{n},f^n)$ to $(u,f)$ in $C_{T}C_b \times C_T C_b \subset C_T \calC^\theta \times C_T \calC^\varepsilon$. Thus, we obtain, by boundedness of $F$, that 
\begin{align*}
\lim_{n\to\infty}\E_{n}[M_{r,t}F((X_{u})_{u\leqslant r})]=0.
\end{align*}
Now, by \cite[Proposition VI.2.1]{shiryaev}, we know that the map $D\ni x\mapsto \int_{0}^{t}f(s,x(s))ds$ is continuous w.r.t. the $J_{1}$-topology and it is bounded by boundedness of $f$. Moreover, if we know that $\mathbb{Q}(\Delta X_{t}=\Delta X_{r}=0)=1$, then by \cite[Proposition VI.3.14]{shiryaev} and since $X^{n}\to X$ in distribution in $D$, we have that $X^{n}_{t}\to X_{t}$ and $X^{n}_{r}\to X_{r}$ in distribution. Together this gives (as $\R\ni y\mapsto u(t,y)-u(r,y)$ is continuous and bounded) 
\begin{align*}
0=\lim_{n\to\infty}\E_{n}[M_{t,r}F((X_{u})_{u\in [0,T]})]=\E_{\mathbb{Q}}[M_{t,r}F((X_{u})_{u\in [0,T]})],
\end{align*}
and since $0\leqslant r \leqslant t \leqslant T$ and $F$ were arbitrary, we obtain that $\mathbb{Q}$ solves the martingale problem for $(\mathcal{G}^{V},\delta_{x})$. So it remains to show that indeed $\Q(\Delta X_t = \Delta X_r = 0)$. Since  the map $C([0,T],\R^d) \times D\ni(x,y)\mapsto x+y\in D$ is continuous by \cite[Section VI.1b, Proposition VI.1.23]{shiryaev} and since $(A^{n},L)$ is tight by Step~1, we obtain (possibly along a further subsequence)
\begin{align*}
X\leftarrow X^{n}= x + \int_{0}^{\cdot}V^{n}(s,X^{n}_{s})ds+L\to x + A+L\quad\text{ in distribution in $D$,}
\end{align*} where $A$ denotes the continuous limit of the drift term. Therefore, $\mathbb{Q}(\Delta X_{t}=\Delta X_{r}=0)=\p(\Delta L_{t}=\Delta L_{r}=0)=1$, and this shows that $\Q$ indeed solves the martingale problem for $(\mathcal G^V,\delta_x)$.

\textit{Step 3: Uniqueness for the martingale problem and strong Markov property}.\\
Let $\Q_{1}$ and $\Q_{2}$ be two solutions of the martingale problem for $\calG^{V}$ with the same initial distribution $\mu=\Q_{1}^{X_{0}}=\Q_{2}^{X_{0}}$. Let $f\in C_{T}\calC^{\epsilon}$ and let $u$ be the solution of $\mathcal G^V u = f$, $u(T) = 0$. Then we obtain for $i=1,2$,
\begin{align*}
\int_{\R^{d}} u(0,x)\mu(dx)=\E_{\Q_{i}}\brackets[\bigg]{u(T,X_{T})-\int_{0}^{T}f(s,X_{s})ds}=-\E_{\Q_{i}}\brackets[\bigg]{\int_{0}^{T}f(s,X_{s})ds}.
\end{align*} Thus, we have for all $f\in C_{T}\calC^{\epsilon}$
\begin{align*}
\E_{\Q_{1}}\brackets[\bigg]{\int_{0}^{T}f(s,X_{s})ds}=\E_{\Q_{2}}\brackets[\bigg]{\int_{0}^{T}f(s,X_{s})ds}.
\end{align*}
Therefore, $\Q_{1}^{X_{t}}=\Q_{2}^{X_{t}}$ for all $t\in [0,T]$, that is, the one dimensional marginal distributions of $\Q_1$ and $\Q_2$ agree. Indeed, this follows by taking $f_{\delta}(s,x)=\delta^{-1}h_{\delta}(s)g(x)$ for $h_{\delta}\simeq\mathbf{1}_{[t,t+\delta]}$ and $g\in\calC^{\epsilon}$ and letting $\delta\to 0$. Now \cite[Theorem 4.4.3]{ethier} shows that $\Q_1 = \Q_2$ and that under the solution $\Q$ to the martingale problem for $(\mathcal G^V,\delta_x)$ the canonical process is a strong Markov process.
\end{proof}
\end{section}

\begin{section}{Brox diffusion with Lévy noise}\label{sect:bd}
The Brox diffusion is the solution $X$ of the SDE
\begin{align}
dX_{t}=\dot{W}(X_{t})dt+dB_{t},\qquad X_{0}=x\in\R,
\end{align} where $B$ is a standard Brownian motion and $(W(x))_{x \in \R}$ is a two-sided standard Brownian motion that is independent of $B$. This model was introduced by Brox~\cite{Brox1986} as a continuous analogue of Sinai’s random walk, with the motivation that when studying $X$ we can exploit the scaling properties of $W$ and $B$. Brox's construction is based on time and space transformations as in the It\^o-McKean construction of diffusions. It is natural to replace $W$ or $B$ by $\alpha$-stable Lévy processes, which also have nice scaling properties. The construction of the process with $W$ replaced by a L\'evy process is not much of a problem, as the It\^o-McKean approach still works~\cite{Tanaka1986, Carmona1997, Kusuoka2017}. On the other hand, replacing $B$ by an $\alpha$-stable L\'evy process is more delicate and it is not obvious if the Ito-McKean construction could work. But using our approach we can hope to solve the martingale problem for the SDE
\begin{align}
dX_{t}=\dot{W}(X_{t})dt+dL_{t},\qquad X_{0}=x\in\R.
\end{align}
To be precise, the white noise $\dot{W}$ is not actually an element of any Besov space, but only of weighted Besov spaces: With $\langle x \rangle = (1+|x|^2)^{1/2}$ we have $\langle \cdot \rangle^{-\kappa} \dot{W}  \in \calC^{-1/2-}$ for all $\kappa > 0$. It is possible to extend our analysis of the martingale problem to allow for a drift term in a suitable weighted Besov space, and at the end of this section we discuss how this could be done. But to simplify the presentation  we consider a periodic white noise $\dot{W}$ instead, which is in the unweighted space $\calC^{-1/2-}$. Note that this regularity is not in the Young regime, no matter which $\alpha \in (1,2]$ we choose, and therefore the methods of~\cite{Athreya2018, deRaynal2019} do not apply and we are not aware of any other way of constructing $X$, apart from the approach we present here.

So let $\xi=\dot{W}$ be a $1$-periodic white noise, that is, $\xi$ is a centered Gaussian process with values in $\mathcal S'(\mathbb T)$, where $\mathbb{T} = \R/\mathbb Z$ is the one-dimensional torus and $\mathcal S'(\mathbb T)$ is the space of Schwartz distributions on $\mathbb T$, i.e. the topological dual of $C^\infty(\mathbb T)$. The covariance of $\xi$ is $\E[\xi(\varphi)\xi(\psi)]=\langle \varphi,\psi\rangle_{L^{2}(\mathbb{T})}$ for $\varphi, \psi \in C^\infty(\mathbb T)$.
To any $u \in \mathcal S'(\mathbb T)$ we associate a periodic distribution on the real line by setting $u^{\R}(\varphi)=u(\sum_{k\in\mathbb{Z}}\varphi(\cdot+k))$, $\varphi\in\mathcal{S}$. If $u\in\calC^{\beta}(\mathbb{T})$, then $u^{\R}\in\calC^{\beta}$. Here $\calC^\beta(\mathbb T)$ is a Besov space on the torus, which is defined in the same way as on the real line, except using the Fourier transform on $\mathbb{T}$ and inverse Fourier transform on $\mathbb Z^d$.

We choose $\xi$ independently of the L\'evy process $L$, and we consider a fixed ``typical'' realization $\xi(\omega)$. To apply the theory that we developed in this paper, we need to construct a canonical enhancement of $\xi(\omega)^R$ in such a way that we obtain an enhanced drift in the sense of~\cref{def:roughdist}.

We first note that almost surely $\xi\in\CTcalC^{-1/2-}(\mathbb{T})$ (so we let $\beta=-1/2-\epsilon$ for some very small $\epsilon>0$), see e.g.~\cite[Exercise~11]{Gubinelli2015EBP}. Therefore, $\xi(\omega)^\R \in \CTcalC^{-1/2-}$ for almost all $\omega$. It remains to construct $(J^{T}(\nabla \xi)\reso \xi)(\omega )\in\CTcalC^{(-2+\alpha)-}(\mathbb{T})$ for almost all $\omega$, which we will do in the next lemma.
\begin{lemma}\label{lem:wnreg}
Let $\alpha\in (3/2,2]$, $\vartheta<\alpha-2$ and $J^{T}(u)(t)=\int_{t}^{T}P_{r-t}u(r)dr$, for the semigroup $(P_{t})$ generated by $\La$, $P_{t}\phi=\F^{-1}(e^{-t\psi^{\alpha}_{\nu}}\F\phi)$. Let $\xi^{n}=\sum_{\abs{k}\leqslant n}\hat{\xi}(k)e_{k}$, where $(e_{k})_{k\in\mathbb{Z}}=(e^{-2\pi i k\cdot})_{k\in\mathbb{Z}}$ is the Fourier basis of $L^{2}(\mathbb{T})$. Then $(J^{T}(\nabla \xi^{n})\reso \xi^{n})_{n}$ converges in probability in $C_{T}\calC^{\vartheta}(\mathbb T)$ to a limit denoted by $J^{T}(\nabla\xi)\reso\xi\in C_{T}\calC^{\vartheta}(\mathbb T)$.
\end{lemma}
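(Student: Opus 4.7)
The strategy is to exploit the Gaussianity of $\xi$: for each $n$, the random variable $J^{T}(\nabla \xi^{n}) \reso \xi^{n}$ is a quadratic functional of $\xi$, hence lies in the sum of the zeroth and second Wiener--It\^o chaoses. On each fixed chaos, Nelson's hypercontractivity makes all $L^{p}$ moments equivalent to the $L^{2}$ moment, so it will be enough to bound the variance of Littlewood--Paley blocks in space, combine with a time-increment estimate, and apply Besov embedding together with Kolmogorov's continuity criterion.

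First I would fix the Fourier representation. Writing $\hat\xi(k)$ for the periodic Fourier coefficients of $\xi$, one has on $\mathbb{T}$ that $\widehat{J^{T}(\nabla \xi^{n})(t)}(k) = a_{k}(t)\,\hat\xi(k)\,\mathbf{1}_{|k|\leqslant n}$ for $k\neq 0$, with
\[
 a_{k}(t) \;=\; \frac{-2\pi i k}{\psi_{\nu}^{\alpha}(k)}\bigl(1-e^{-(T-t)\psi_{\nu}^{\alpha}(k)}\bigr), \qquad |a_{k}(t)|\lesssim |k|^{1-\alpha},
\]
uniformly in $t\in[0,T]$, using $1-e^{-x}\leqslant \min(1,x)$ and $\psi_{\nu}^{\alpha}(k)\simeq |k|^{\alpha}$ in dimension one. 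Moreover $\mathbb{E}[J^{T}(\nabla\xi^{n})(t)\reso\xi^{n}]$ vanishes: on the Fourier side it reduces to sums of the form $\sum_{k}p_{i}(k)p_{j}(k)a_{k}(t)$, and $p_{i}p_{j}$ is even in $k$ whereas $a_{k}$ is odd. Hence the full object sits in the second chaos.

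Second, I would estimate the variance of the Littlewood--Paley blocks. By Wick's theorem for Gaussian families,
\[
\mathbb{E}\bigl|\Delta_{q}\bigl(J^{T}(\nabla\xi^{n})(t)\reso\xi^{n} - J^{T}(\nabla\xi^{m})(t)\reso\xi^{m}\bigr)(x)\bigr|^{2} \;\lesssim\; \sum_{k,\ell\in\mathbb{Z}}\sigma_{q}(k,\ell)\,|a_{k}(t)|^{2}\,\mathbf{1}_{A_{n,m}(k,\ell)},
\]
where $\sigma_{q}$ localizes to $|k|\simeq|\ell|\simeq 2^{q}$ and $|k+\ell|\lesssim 2^{q}$, and $A_{n,m}$ is the symmetric difference of the cutoffs at levels $n$ and $m$. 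Counting $\lesssim 2^{2q}$ pairs in dimension one and using $|a_{k}|\lesssim |k|^{1-\alpha}$, the right hand side is $\lesssim 2^{q(4-2\alpha)} = 2^{-2q(\alpha-2)}$, matching exactly the target regularity. Restricting to $A_{n,m}$ forces at least one of $|k|,|\ell|$ to exceed $n\wedge m$, so for each fixed $q$ the sum vanishes as $n\wedge m\to\infty$; by hypercontractivity the same is true for arbitrary $L^{p}$ moments.

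For the time regularity I would use
\[
|a_{k}(t)-a_{k}(s)| \;\lesssim\; |k|^{1-\alpha}\,\min\!\bigl(1,(t-s)|k|^{\alpha}\bigr)^{\gamma} \;\lesssim\; (t-s)^{\gamma}\,|k|^{1-\alpha+\alpha\gamma},\qquad \gamma\in[0,1],
\]
which yields a H\"older bound in time at the price of an arbitrarily small loss of spatial regularity. Combining the space and time estimates, invoking the Besov embedding $B_{p,p}^{\vartheta'}(\mathbb{T})\hookrightarrow \calC^{\vartheta}(\mathbb{T})$ for $\vartheta'>\vartheta+1/p$, sending $p\to\infty$, and applying Kolmogorov's continuity criterion, shows that $(J^{T}(\nabla\xi^{n})\reso\xi^{n})_{n}$ is Cauchy in $L^{p}(\Omega;C_{T}\calC^{\vartheta}(\mathbb{T}))$ for every $\vartheta<\alpha-2$, hence converges in probability to the claimed limit. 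The main obstacle is the Fourier bookkeeping in the variance step: carrying out the Wick pairings cleanly, isolating the correct localization $\sigma_{q}$, and verifying that the resulting exponent $4-2\alpha$ genuinely matches the announced regularity $\alpha-2$ while the tail in $A_{n,m}$ decays in $n\wedge m$. Once this Fourier computation is in hand, everything else is routine second-chaos technology.
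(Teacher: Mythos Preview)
Your strategy is the same as the paper's: second-chaos representation, hypercontractivity to reduce to variance, Fourier-side variance bound, time-H\"older increment, Besov embedding plus Kolmogorov. Your parity observation that the zeroth-chaos part vanishes is correct and slightly cleaner than the paper, which instead bounds the expectation term and shows it contributes only at the block $j=-1$.

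There is, however, a real slip exactly at the step you flagged as the main obstacle. The localization ``$|k|\simeq|\ell|\simeq 2^{q}$'' is wrong: applying $\Delta_{q}$ to the resonant product forces $|k+\ell|\simeq 2^{q}$ and $|k|\simeq|\ell|$, which only gives $|k|\simeq|\ell|\gtrsim 2^{q}$, not $\simeq 2^{q}$. Consequently your count of ``$\lesssim 2^{2q}$ pairs'' is not a bound at all; at each dyadic scale $i\geqslant q$ there are $\simeq 2^{i}\cdot 2^{q}$ pairs $(k,\ell)$ with $|k|\simeq 2^{i}$ and $|k+\ell|\simeq 2^{q}$, contributing $\simeq 2^{q}\cdot 2^{i(3-2\alpha)}$ after inserting $|a_{k}|^{2}\lesssim 2^{2i(1-\alpha)}$. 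Summing over $i\geqslant q$ gives the desired $2^{q(4-2\alpha)}$ \emph{only} because the geometric series converges when $3-2\alpha<0$, i.e.\ $\alpha>3/2$; this is precisely where the hypothesis $\alpha\in(3/2,2]$ enters, and it is absent from your argument. Once you correct the localization and carry the sum over $i\geqslant q$, the rest of your sketch goes through as written.
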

\begin{proof}
We carry out the computations for $n=\infty$ and show that $J^{T}(\nabla\xi)\reso\xi\in C_{T}\calC^{\vartheta}(\mathbb T)$ can be constructed as a random variable in the second Wiener-It\^o chaos generated by $\xi$. Since the kernel appearing in the definition of $J^{T}(\nabla\xi)\reso\xi$ provides a uniform bound for the kernels that appear in the chaos representation of $(J^{T}(\nabla \xi^{n})\reso \xi^{n})_{n}$, the claimed convergence then follows from the dominated convergence theorem.

To bound $J^{T}(\nabla\xi)\reso\xi$, note that $J^{T}(\nabla\xi)(t)=\varrho_{t}\ast\xi$, where $\varrho_{t}= \nabla \F^{-1}(\int_{t}^{T}e^{(r-t)\psi^{\alpha}_{\nu}}dr)$. We first derive a bound on the expectation of the $B_{p,p}^{\zeta}$-norm (for $\zeta$ to be chosen afterwards) of the increment $(\varrho_{t}\ast\xi)\reso\xi-(\varrho_{s}\ast\xi)\reso\xi=((\varrho_{t}-\varrho_{s})\ast\xi)\reso\xi$. Using this bound, our claim will follow from the Besov embedding theorem together with Kolmogorov's continuity criterion. We have
\begin{align*}
\E[\norm{(\varrho_{t}-\varrho_{s})\ast\xi\reso\xi}_{B_{p,p}^{\zeta}}^{p}]&=\E\big[\sum_{j}2^{j\zeta p}\norm{\Delta_{j}((\varrho_{t}-\varrho_{s})\ast\xi\reso\xi)}_{L^{p}}^{p}\big]\\&=\sum_{j}2^{j\zeta p}\int_{\mathbb{T}}\E[\abs{\Delta_{j}((\varrho_{t}-\varrho_{s})\ast\xi\reso\xi)(x)}^{p}]dx\\&\lesssim\sum_{j}2^{j\zeta p}\int_{\mathbb{T}}\E[\abs{\Delta_{j}((\varrho_{t}-\varrho_{s})\ast\xi\reso\xi)(x)}^{2}]^{p/2}dx,
\end{align*} where in the last step we used that the random variable $\Delta_{j}((\varrho_{t}-\varrho_{s})\ast\xi\reso\xi)(x)$ is in the second (inhomogeneous) Wiener-It\^o chaos and therefore all its moments are comparable by Gaussian hypercontractivity~\cite[Theorem~5.10]{Janson1997}. It remains to estimate 
\begin{align}\label{eq:est1}
\E[\abs{\Delta_{j}((\varrho_{t}-\varrho_{s})\ast\xi\reso\xi)(x)}^{2}]=\E[\abs{((\varrho_{t}-\varrho_{s})\ast\xi\reso\xi)(\kappa_{j}(x-\cdot))}^{2}],
\end{align} where $\kappa_{j}=\F^{-1}_{\mathbb Z} p_{j} = \sum_{k \in \mathbb Z} e^{2\pi i k \cdot} p_j(k)$. Let now $\psi_{\reso}(x,y)=\sum_{\abs{l_1-l_2}\leqslant 1}\kappa_{l_1}(x)\kappa_{l_2}(y)$. Then with formal notation:
\begin{align*}
(\varrho_{t}-\varrho_{s})\ast\xi\reso\xi(x)=\iint\psi_{\reso}(x-y_{1},x-y_{2})((\varrho_{t}-\varrho_{s})\ast\xi)(y_{1})\xi(y_{2})dy_{1}dy_{2},
\end{align*}
and thus
\begin{align*}
\squeeze[1]{(\varrho_{t}-\varrho_{s})\ast\xi\reso\xi(\kappa_{j}(x-\cdot))=\iiint\kappa_{j}(x-z)\psi_{\reso}(z-y_{1},z-y_{2})\xi((\varrho_{t}-\varrho_{s})(y_{1}-\cdot))\xi(\delta(y_{2}-\cdot))dy_{1}dy_{2}dz.}
\end{align*}
To derive the chaos decomposition of the right hand side, we introduce the kernel
\begin{align*}
A_{j}^{t,s}(x,r_{1},r_{2})=\iiint\kappa_{j}(x-z)\psi_{\reso}(z-y_{1},z-y_{2})(\varrho_{t}-\varrho_{s})(y_{1}-r_{1})\delta(y_{2}-r_{2})dy_{1}dy_{2}dz,
\end{align*}
with which
\begin{align}\label{eq:limit}
(\varrho_{t}-\varrho_{s})\ast\xi\reso\xi(\kappa_{j}(x-\cdot))=W_{2}(A_{j}^{t,s}(x,\cdot,\cdot))+\E[(\varrho_{t}-\varrho_{s})\ast\xi\reso\xi(\kappa_{j}(x-\cdot))],
\end{align} where $W_{2}$ denotes a second order Wiener-Itô integral. We start by estimating the first term on the right hand side: Using the symmetrization $\tilde{A}_j^{t,s}(x,r_1,r_2) = \tfrac12 (A_{j}^{t,s}(x,r_{1},r_{2}) + A_{j}^{t,s}(x,r_{2},r_{1}))$, we have
\begin{align}\label{eq:est2} \nonumber
\E[\abs{W_{2}(A_{j}^{t,s}(x,\cdot,\cdot))}^{2}] & = 2 \norm{\tilde{A}_{j}^{t,s}(x,\cdot,\cdot)}_{L^{2}(\mathbb{T}^{2})}^{2} \leqslant 2 \norm{A_{j}^{t,s}(x,\cdot,\cdot)}_{L^{2}(\mathbb{T}^{2})}^{2}  \\
& =\sum_{k_{1},k_{2}\in\mathbb{Z}}\abs[\bigg]{\iint A_{j}^{t,s}(x,r_{1},r_{2})e^{-2\pi i(k_{1}r_{1}+k_{2}r_{2})}dr_{1}dr_{2}}^{2},
\end{align}
where the last equality is Parseval's identity. Now, we obtain by computing each integral iteratively 
\begin{align*}
\squeeze[1]{\iint A_{j}(x,r_{1},r_{2})e^{-2\pi i(k_{1}r_{1}+k_{2}r_{2})}dr_{1}dr_{2}=\hat{\kappa}_{j}(-(k_{1}+k_{2}))e^{-2\pi i(k_{1}+k_{2})x}\hat{\psi_{\reso}}(-k_{1},-k_{2})\widehat{(\varrho_{t}-\varrho_{s})}(-k_{1})},
\end{align*}
where $\hat f(k) = \int_{\mathbb T} f(x) e^{-2\pi i k x} dx$ is the Fourier transform on the torus and
\begin{align*}
\hat{\psi_{\reso}}(k_{1},k_{2}):=\iint\psi_{\reso}(y_{1},y_{2})e^{-2\pi i(k_{1}y_{1}+k_{2}y_{2})}dy_{1}dy_{2}=\sum_{\abs{l_{1}-l_{2}}\leqslant 1} p_{l_{1}}(k_{1})p_{l_{2}}(k_{2}).
\end{align*}
As $\abs{\psi^{\alpha}_{\nu}(k)}\geqslant\abs{k}^{\alpha}$ and $1-e^{-x}\leqslant x^{\epsilon}$ for $x\geqslant 0$, $\epsilon\in [0,1]$, we have for $s<t$ and $\epsilon\in [0,1]$
\begin{align*}
\MoveEqLeft
\abs{\widehat{(\varrho_{t}-\varrho_{s})}(k)}\\&\lesssim\abs{k}\abs[\bigg]{\int_{s}^{t}e^{-(r-s)\psi^{\alpha}_{\nu}(k)}dr+\int_{t}^{T}e^{-(r-t)\psi^{\alpha}_{\nu}(k)}(1-e^{-(t-s)\psi^{\alpha}_{\nu}(k)})dr}\\&\lesssim_{T}\abs{t-s}^{\epsilon}\abs{k}^{1-\alpha+\alpha\epsilon}.
\end{align*}
This leads to
\begin{align*}
\MoveEqLeft
\abs[\bigg]{\iint A_{j}^{t,s}(x,r_{1},r_{2})e^{-2\pi i(k_{1}r_{1}+k_{2}r_{2})}dr_{1}dr_{2}}^{2}\\&\lesssim\abs{t-s}^{2\epsilon}\abs{p_{j}(k_{1}+k_{2})}^{2}\abs[\big]{\sum_{\abs{l_{1}-l_{2}}\leqslant 1}\!\!p_{l_{1}}(k_{1})p_{l_{2}}(k_{2})}^{2}\abs{k_{1}}^{2-2\alpha(1-\epsilon)}.
\end{align*}
Let now $\tilde{p}_{l_{1}}:=\sum_{l:\abs{l-l_{1}}\leqslant 1}p_{l}$. Since for fixed $k_{1}$ there are at most three $l_{1}$ with $p_{l_{1}}(k_{1})\neq 0$, we can bound $\abs[\big]{\sum_{l_1} p_{l_{1}}(k_{1})\tilde{p}_{l_{1}}(k_{2})}^{2}\lesssim  \sum_{l_1} p_{l_{1}}(k_{1})^2\tilde{p}_{l_{1}}(k_{2})^2$ and thus we obtain in \eqref{eq:est2}
\begin{align}\label{eq:k}
\E[\abs{W_{2}(A_{j}^{t,s}(x,\cdot,\cdot))}^{2}]&\lesssim \abs{t-s}^{2\epsilon}\sum_{k_{1},k_{2}}\sum_{l_{1}}p_{j}(k_{1}+k_{2})^{2}p_{l_{1}}(k_{1})^{2}\tilde{p}_{l_{1}}(k_{2})^{2}\abs{k_{1}}^{2-2\alpha(1-\epsilon)}\nonumber\\&=\abs{t-s}^{2\epsilon}\sum_{l_{1}: 2^{j}\lesssim 2^{l_{1}}}\sum_{k_{1}}2^{j}p_{l_{1}}(k_{1})^{2}\abs{k_{1}}^{2-2\alpha(1-\epsilon)}\\&\lesssim\abs{t-s}^{2\epsilon} \sum_{l_{1}: 2^{j}\lesssim 2^{l_{1}}}2^{j}2^{l_{1}}2^{l_{1}(2-2\alpha(1-\epsilon))}\lesssim\abs{t-s}^{2\epsilon} 2^{j(4-2\alpha(1-\epsilon))}\nonumber,
\end{align}
where we used that $p_{i}(k)\neq 0$ for $O(2^{i})$ values of $k$, with $i=j$ respectively $i=l_1$, and we choose $\epsilon \in (0,1)$ so that $3-2\alpha(1-\varepsilon)<0$ to obtain the convergence of the series in the last estimate (recall that we assume $\alpha>3/2$).

Let $e_k(x) = e^{2\pi ikx}$ so that $\int e_k(x) e_l(x) dx = \delta_{k=-l}$. Then the second term on the right hand side of~\eqref{eq:limit} is
\begin{align*}
\MoveEqLeft
\E[(\varrho_{t}-\varrho_{s})\ast\xi\reso\xi(\kappa_{j}(x-\cdot))]^{2}\\&=\squeeze[1]{\paren[\bigg]{\iiint \kappa_{j}(x-z)\psi_{\reso}(z-y_{1},z-y_{2})(\varrho_{t}-\varrho_{s})(y_{1}-y_{2})dy_{1}dy_{2}dz}^{2}}\\
&=\squeeze[1]{\paren[\bigg]{\sum_{k,l,k',l'}\hat{\kappa_{j}}(k)\hat{\psi_{\reso}}(k',l')\widehat{(\varrho_{t}-\varrho_{s})}(l)\iiint e_{k}(x-z)e_{k'}(z-y_{1})e_{l'}(z-y_{2})e_{l}(y_{1}-y_{2})dy_{1}dy_{2}dz}^{2}}\\
&=\paren[\bigg]{\sum_{k'}\hat{\kappa_{j}}(0)\hat{\psi_{\reso}}(k',-k')\widehat{(\varrho_{t}-\varrho_{s})}(k')}^{2}\\
&\lesssim\delta_{j=-1}\abs{t-s}^{2\epsilon}\paren[\bigg]{\sum_{k'}\hat{\psi_{\reso}}(k',-k')\abs{k'}^{1-\alpha(1-\epsilon)}}^{2}\\&\lesssim\delta_{j=-1}\abs{t-s}^{2\epsilon}\sum_{l}2^{l}2^{l(2-2\alpha(1-\epsilon))} \lesssim \delta_{j=-1}\abs{t-s}^{2\epsilon},
\end{align*}
by orthogonality of the Fourier basis $(e_k)$ and where again we assume that $\epsilon\in(0,1]$ is small enough so that $3-2\alpha(1-\varepsilon) < 0$ to guarantee that the series in $l$ converges.

Combining this estimate with~\eqref{eq:limit} and~\eqref{eq:k}, we get via the Besov embedding theorem that for all $\vartheta' < \alpha - 2$ there exists $\varepsilon>0$ such that for all $p>1$ (by taking $\zeta=\vartheta'$),
\begin{align*}
\E[\norm{(\varrho_{t}-\varrho_{s})\ast\xi\reso\xi}_{\calC^{\vartheta'-1/p}}^{p}]\lesssim\E[\norm{(\varrho_{t}-\varrho_{s})\ast\xi\reso\xi}_{B_{p,p}^{\vartheta'}}^{p}]\lesssim\abs{t-s}^{\epsilon p}.
\end{align*}
Aftering choosing $p$ large enough so that $\varepsilon p >0$ we obtain from Kolmogorov's continuity criterion that $J^T(\nabla \xi)\reso \xi \in C_T \calC^{\vartheta'-1/p}$. Given $\vartheta < \alpha-2$ as in the statement of the theorem, it now suffices to take $\vartheta' \in (\vartheta, \alpha-2)$ and then $p$ large enough so that $\vartheta'-1/p \geqslant \vartheta$.
\end{proof}

By freezing a ``typical'' realization of $\xi(\omega)$, we obtain the following corollary of \cref{lem:wnreg} and \cref{thm:mainthm}.
\begin{theorem}
Let $\alpha\in (7/4,2]$ and let $\xi$ be a periodic white noise on a probability space $(\Omega, \F, p)$. Then for almost all $\omega$ there exists a unique solution to the ``quenched martingale problem''
associated to the Brox diffusion with symmetric, $\alpha$-stable Lévy process $L$,
\begin{align*}
dX_{t}=\xi(\omega)(X_{t})dt+dL_{t},\qquad X_{0}=x\in\R.
\end{align*}
If we denote the distribution of $X$ by $P_\omega$, then the ``annealed measure'' $\int P_\omega(\cdot) \p(d\omega)$ is the distribution of a Brox diffusion in a white noise potential, driven by an independent symmetric $\alpha$-stable L\'evy process $L$.
\end{theorem}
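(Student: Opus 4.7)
The strategy is to check the hypotheses of \cref{thm:mainthm} for the drift $V(\omega) := \xi(\omega)^\R$ for $\p$-almost every $\omega$, and then to interpret the resulting quenched solution in the annealed sense. For $\alpha \in (7/4, 2]$ I first choose $\epsilon > 0$ small enough that $\beta := -1/2 - \epsilon$ satisfies $\beta > (2-2\alpha)/3$; this reduces to $\alpha > 7/4 + 3\epsilon/2$, which holds by assumption. Since $\beta \leqslant (1-\alpha)/2$ we are in the rough regime and an enhancement must be supplied.

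The enhancement is provided by \cref{lem:wnreg}, applicable since $\alpha > 7/4 > 3/2$: almost surely $\xi(\omega) \in \calC^\beta(\mathbb T)$, and the resonant product $J^T(\nabla \xi(\omega))\reso \xi(\omega)$ exists in $C_T \calC^\vartheta(\mathbb T)$ for every $\vartheta < \alpha - 2$, in particular for $\vartheta = 2\beta + \alpha - 1$. I then periodize via $u \mapsto u^\R$ to obtain the candidate pair $\mathcal V(\omega) := (\xi(\omega)^\R, [J^T(\nabla \xi(\omega))\reso \xi(\omega)]^\R) \in C_T \calC^\beta \times C_T \calC^{2\beta+\alpha-1}$. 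To see that $\mathcal V(\omega) \in \calX^\beta$ in the sense of \cref{def:roughdist}, I use the smooth truncations $\eta^n := \xi^n(\omega)^\R$, which belong to $C_T C^\infty_b(\R, \R)$ as time-independent trigonometric polynomials; by \cref{lem:wnreg}, together with the fact that the semigroup $P_t$ and the para- and resonant products commute with periodization of periodic inputs, $\calK(\eta^n) \to \mathcal V(\omega)$ in the required topology.

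Because in dimension one the non-triviality of the $\alpha$-stable Lévy process is equivalent to \cref{ass}, I may now apply \cref{thm:mainthm} to the enhanced drift $\mathcal V(\omega)$, obtaining for each $x \in \R$ a unique measure $P_\omega$ on $D([0,T],\R)$ solving the martingale problem for $(\mathcal G^{\mathcal V(\omega)}, \delta_x)$. For the annealed statement, the proof of \cref{thm:mainthm} realizes $P_\omega$ as the weak limit of the laws of the strong solutions $X^n$ to $dX^n_t = \xi^n(\omega)(X^n_t)\,dt + dL_t$; combined with the local Lipschitz continuity of the solution map from \cref{thm:rgeneq}, this yields measurability of $\omega \mapsto P_\omega$, and by independence of $\xi$ and $L$ the integral $\int P_\omega\, \p(d\omega)$ coincides with the $X$-marginal of the joint law of $(\xi, X)$ obtained by sampling $(\xi, L)$ from their product law and passing to the limit in the approximating SDE.

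The main obstacle, once \cref{lem:wnreg} is in hand, is the bookkeeping required to transfer the torus-level construction to the real-line setting of $\calX^\beta$ and to verify joint measurability of $\omega \mapsto \mathcal V(\omega)$ with values in $\calX^\beta$. Both steps are standard, but one must keep in mind that the Besov norms on $\mathbb T$ and on $\R$ are defined via a priori different Littlewood--Paley decompositions, so the identification is at the level of the periodic inputs and uses translation invariance of $P_t$ and of the para- and resonant products.
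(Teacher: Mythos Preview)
Your proposal is correct and follows the same approach as the paper, which treats the result as an immediate corollary of \cref{lem:wnreg} and \cref{thm:mainthm} without spelling out any of the details you supply (choice of $\beta$, periodization, membership in $\calX^\beta$, measurability for the annealed measure). Your added remarks on transferring the torus construction to $\R$ and on the measurability of $\omega\mapsto P_\omega$ go beyond what the paper writes but are the natural points one has to check.
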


\begin{remark}
	By analogy with rough path regularities, the constraint $\alpha>7/4$ corresponds to an ``$\alpha > 1/3$ condition'' in rough paths, and we expect that it is possible to treat $\alpha \in (3/2, 7/4]$ by considering higher order expansions of the Kolmogorov backward equation. To carry out this analysis we would need to use regularity structures~\cite{Hairer2014} or the higher order paracontrolled calculus of~\cite{Bailleul2019High}. The constraint $\alpha > 3/2$ appears in the construction of the resonant product $J^T(\nabla \xi) \reso \xi$, so it seems to be of a similar nature as the constraint $H>1/4$ for the Hurst index of a fractional Brownian motion that is required to construct its iterated integrals~\cite{Coutin2002}. But in fact not only the probabilistic construction fails at $\alpha=3/2$: At that value the equation is \emph{critical} in the sense of Hairer~\cite{Hairer2014} and we cannot solve it with perturbative techniques such as paracontrolled distributions or regularity structures.
\end{remark}

\begin{remark}
	To avoid dealing with weighted function spaces, we restricted our attention to periodic $\xi$. But we expect that it is also possible to treat the white noise $\xi$ on $\R$ with our approach, at the price of a slightly more involved analysis. In that case we have $\langle \cdot \rangle^{-\kappa} \xi  \in \calC^{-1/2-}$ and $\langle \cdot \rangle^{-\kappa} J^T(\nabla \xi)\reso \xi  \in \calC^{\alpha-2-}$ for all $\kappa > 0$, where $\langle x \rangle = (1+|x|^2)^{1/2}$. With the techniques of~\cite{Delarue2016, Hairer2015Simple, Martin2017} it is still possible to solve the Kolmogorov backward equation for such $\xi$, by working in weighted function spaces with a time-dependent weight. Roughly speaking, if the terminal condition $u_T$ grows like $e^{l |x|^\delta}$ as $x \to \infty$, where $\delta \in (0,1)$ and $l \in R$, then $u(T-t)$ grows like $e^{(l+t)|x|^\delta}$. This might look dangerous because for $\alpha< 2$ our L\'evy noise does not even have finite second moments, let alone finite (sub-)exponential moments. But we can take $l \in \R$ arbitrary, and in particular $l \leqslant -T$ is allowed and then $u(t)$ is bounded for all $t$. In that way it should be possible to extend our results to construct a Brox diffusion with L\'evy noise in a non-periodic white noise potential.
\end{remark}

\end{section}

\appendix

\begin{section}{Appendix}

\begin{subsection}{Commutator estimates}\label{sec:commutator-appendix}
The following commutator estimate between the semigroup generated by $-\La$ and the paraproduct will be used in the proof of \cref{lem:sharp} below.

\begin{lemma}\label{schaudercom}
Let $(P_{t})$ be as in \cref{schauder}.
Then, for $\gamma <1$, $\beta\in\R$ and $\vartheta \geqslant -1$ the following commutator estimate holds:
\begin{align}
\norm{P_{t}(u\para v)-u\para P_{t}v}_{\gamma+\beta+\vartheta}\lesssim t^{-\vartheta/\alpha}\norm{u}_{\gamma}\norm{v}_{\beta}.\label{eq:scom}
\end{align}
\end{lemma}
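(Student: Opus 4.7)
My plan is to adapt the semigroup-paraproduct commutator argument of~\cite[Lemma~A.7]{Gubinelli2015Paracontrolled}, originally written for the heat semigroup, to the non-local semigroup $(P_t)$ generated by $-\La$. I would first dyadically decompose
\begin{align*}
P_t(u\para v) - u\para P_tv = \sum_{j\geqslant -1} F_{j,t},\qquad F_{j,t}:= P_t(S_{j-1}u\cdot\Delta_jv) - S_{j-1}u\cdot P_t\Delta_jv,
\end{align*}
and note that each block has Fourier support in an annulus of radius $\simeq 2^j$ (since $\Delta_jv$ does, and $S_{j-1}u$ sits on a Fourier ball of radius $\lesssim 2^{j-1}$). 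Thus the claim reduces to showing $\sup_j 2^{j(\gamma+\beta+\vartheta)}\|F_{j,t}\|_{L^\infty}\lesssim t^{-\vartheta/\alpha}\|u\|_\gamma\|v\|_\beta$ uniformly in $j$.

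Writing $F_{j,t}(x)=\int \rho_t(z)[S_{j-1}u(x-z)-S_{j-1}u(x)]\Delta_jv(x-z)\,dz$, I would isolate the leading gradient term via a first-order Taylor expansion, $F_{j,t} = -\nabla S_{j-1}u\cdot G_{j,t} + R_{j,t}$ with $G_{j,t}:=(z\rho_t)\ast\Delta_jv$. The Fourier symbol of the convolution kernel $z\rho_t$ is $-it\nabla\psi^\alpha_\nu(\xi)e^{-t\psi^\alpha_\nu(\xi)}$, which on the annular support of $\widehat{\Delta_jv}$ is bounded by $t\,2^{j(\alpha-1)}e^{-ct2^{j\alpha}}$ (using the coercivity $\psi^\alpha_\nu(\xi)\gtrsim |\xi|^\alpha$ which follows from \cref{ass}); combined with the Bernstein bound $\|\nabla S_{j-1}u\|_{L^\infty}\lesssim 2^{j(1-\gamma)}\|u\|_\gamma$ (valid because $\gamma<1$), this yields, with $y := t^{1/\alpha}2^j$,
\begin{align*}
2^{j(\gamma+\beta+\vartheta)}\|\nabla S_{j-1}u\cdot G_{j,t}\|_{L^\infty} \lesssim t^{-\vartheta/\alpha}\,y^{\alpha+\vartheta}e^{-cy^\alpha}\|u\|_\gamma\|v\|_\beta\lesssim t^{-\vartheta/\alpha}\|u\|_\gamma\|v\|_\beta,
\end{align*}
where the finite sup $\sup_y y^{\alpha+\vartheta}e^{-cy^\alpha}<\infty$ uses $\alpha+\vartheta\geqslant 0$ (from $\vartheta\geqslant -1$ and $\alpha\geqslant 1$).

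The remaining Taylor remainder $R_{j,t}$ is bounded by a Hölder-type estimate combined with the $(1+\lambda)$-th moment $\int|\rho_t(z)|\,|z|^{1+\lambda}\,dz\lesssim t^{(1+\lambda)/\alpha}$, finite because $\alpha>1$ allows $\lambda\in[0,\alpha-1)$. In terms of $y$ this bound reads $t^{-\vartheta/\alpha}\,y^{1+\lambda+\vartheta}\|u\|_\gamma\|v\|_\beta$, which is $\lesssim t^{-\vartheta/\alpha}$ on low frequencies ($y\leqslant 1$) since $1+\lambda+\vartheta\geqslant 0$ (again using $\vartheta\geqslant -1$); on high frequencies one instead appeals to the Fourier-multiplier smoothing of $P_t$ on annular supports, $\|R_{j,t}\|_{L^\infty}\lesssim e^{-ct2^{j\alpha}}(\cdots)$, and absorbs any polynomial growth in $j$ via the elementary inequality $\sup_{y\geqslant 1}y^{a}e^{-cy^\alpha}<\infty$. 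The main technical obstacle is precisely this balancing of the two regimes for the remainder while matching the sharp power $t^{-\vartheta/\alpha}$: it relies both on the refined splitting into the leading gradient term (which avoids a spurious $t^{-\gamma/\alpha}$ loss when $\gamma>0$) and on the constraint $\vartheta\geqslant -1$, which guarantees the positive exponents needed to absorb $y^{1+\lambda+\vartheta}$ at low frequencies and to keep $y^{\alpha+\vartheta}e^{-cy^\alpha}$ uniformly bounded at high frequencies.
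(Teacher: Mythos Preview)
Your overall strategy---decompose into paraproduct blocks $F_{j,t}$ with annular Fourier support and balance a commutator-type gain against semigroup smoothing in the scale variable $y=t^{1/\alpha}2^j$---is exactly the approach behind the reference the paper cites (Perkowski's thesis, which in turn generalises~\cite[Lemma~A.7]{Gubinelli2015Paracontrolled}). So in spirit you are doing the paper's proof.

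There is, however, a genuine gap in your high-frequency treatment of the remainder. You write $\|R_{j,t}\|_{L^\infty}\lesssim e^{-ct2^{j\alpha}}(\cdots)$ and say the exponential absorbs polynomial growth in $j$. The trouble is what goes into $(\cdots)$. If you obtain the exponential factor by bounding the two terms of $F_{j,t}$ separately (which is the only place $P_t$ acts cleanly), you lose the commutator structure and pick up $\|S_{j-1}u\|_{L^\infty}$, which for $\gamma>0$ is only $\lesssim\|u\|_\gamma$. This yields $2^{j(\gamma+\beta+\vartheta)}\|F_{j,t}\|_{L^\infty}\lesssim t^{-(\gamma+\vartheta)/\alpha}\,y^{\gamma+\vartheta}e^{-cy^\alpha}$, and the supremum over $y\geqslant 1$ leaves $t^{-(\gamma+\vartheta)/\alpha}$---the wrong power of $t$ for $\gamma>0$, and not good enough for the application in \cref{lem:sharp}. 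The exponential beats powers of $y$, but it cannot repair a bad power of $t$.

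The fix is to keep the commutator gain \emph{and} the smoothing together by localising the kernel: since the integrand $z\mapsto[S_{j-1}u(x-z)-S_{j-1}u(x)]\Delta_jv(x-z)$ has Fourier support in the annulus $2^j\tilde{\mathcal A}$, you may replace $\rho_t$ by the localised kernel $K^t_j:=\mathcal F^{-1}\big(\chi(2^{-j}\cdot)e^{-t\psi^\alpha_\nu}\big)$, for which $\int|K^t_j(z)|\,|z|\,dz\lesssim 2^{-j}e^{-ct2^{j\alpha}}$. Then the first-order Lipschitz bound $|S_{j-1}u(x-z)-S_{j-1}u(x)|\leqslant|z|\,\|\nabla S_{j-1}u\|_{L^\infty}\lesssim|z|\,2^{j(1-\gamma)}\|u\|_\gamma$ gives directly
\[
2^{j(\gamma+\beta+\vartheta)}\|F_{j,t}\|_{L^\infty}\lesssim t^{-\vartheta/\alpha}\,y^{\vartheta}e^{-cy^\alpha}\,\|u\|_\gamma\|v\|_\beta,
\]
which is bounded for $y\geqslant 1$. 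Combined with your full-kernel first-moment bound for $y\leqslant 1$ (which indeed uses $\alpha>1$, implicit in the paper's setting), this closes the argument---and in fact makes the gradient/remainder splitting unnecessary.
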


\begin{proof}
This is \cite[Lemma 5.3.20 and Lemma 5.5.7]{doktorp}, applied to $\varphi(z)=\exp(-\psi^{\alpha}_{\nu}(z))$. 
\end{proof}

\begin{proof}[Proof of \cref{lem:sharp}]
We write $J^{T}( g\para h)(t)- g(t)\para J^{T}(h)(t)=I_{1}(t)+I_{2}(t)$, where 
\begin{align*}
&I_{1}(t)=\int_{t}^{T}(P_{r-t}( g(r)\para h(r))- g(r)\para P_{r-t}h(r))dr,\\
&I_{2}(t)=\int_{t}^{T}( g(r)- g(t))\para P_{r-t}h(r)dr.
\end{align*} For $I_{1}$ we apply \eqref{eq:scom} and obtain for $t\in [T-\overline{T},T]$ as $\sigma<1$ and $-1\leqslant\sigma-\varsigma+1<\alpha$,
\begin{align*}
\norm{I_{1}(t)}_{2\sigma+1}\lesssim\int_{t}^{T}(r-t)^{-\frac{\sigma-\varsigma+1}{\alpha}}\norm{g(r)}_{\calC^{\sigma}_{\R^{d}}}\norm{h(r)}_{\calC^{\varsigma}_{\R^{d}}}dr\lesssim \overline{T}^{\kappa}\norm{g}_{C_{\overline{T},T}\calC^{\sigma}_{\R^{d}}}\norm{h}_{C_{T}\calC^{\varsigma}_{\R^{d}}},
\end{align*} where $\kappa:=1-\frac{\sigma-\varsigma+1}{\alpha}>0$.
Now it follows from the estimates for the paraproduct~\eqref{eq:paraproduct-estimates}, and from the estimate~\eqref{eq:schauder1} for the regularizing effect of $P_t$ as $\sigma>0$ that
\begin{align*}
\norm{I_{2}(t)}_{2\sigma+1}&\lesssim\int_{t}^{T}\norm{g(r)-g(t)}_{L^{\infty}_{\R^{d}}}\norm{P_{r-t}h(r)}_{\calC^{2\sigma+1}_{\R^{d}}}dr\\
&\lesssim \norm{g}_{C^{\sigma/\alpha}_{\overline{T},T}L^{\infty}_{\R^{d}}} \norm{h}_{C_{T}\mathcal{C}^{\varsigma}_{\R^{d}}}  \int_{t}^{T} (r-t)^{\frac{\sigma}{\alpha}}(r-t)^{-\frac{2\sigma+1-\varsigma}{\alpha}}dr\\
&\lesssim \overline{T}^{\kappa}\norm{g}_{C^{\sigma/\alpha}_{\overline{T},T}L^{\infty}_{\R^{d}}}\norm{h}_{C_{T}\mathcal{C}^{\varsigma}_{\R^{d}}},
\end{align*} where $\kappa=1-\frac{\sigma-\varsigma+1}{\alpha}>0$. This is the claimed bound.
\end{proof}
\end{subsection}

\begin{subsection}{An application of Campbell's formula}\label{sec:campbell-appendix}

Here we are in the setting of Lemma~\ref{lem:moments}, i.e. $\pi$ is the Poisson random measure of the $\alpha$-stable Lévy process $L$, $| \omega | := \omega_1 + 2 \omega_2 + \cdots + n \omega_n$, and $C>0$ and $0\leqslant r < t$. Lemma~\ref{lem:moments} follows by plugging $\lambda = 0$ into~\cref{eq:moment-generating} below.

\begin{lemma}
For $\lambda \in \R$ we define the following moment generating function:
  \[ \Phi (\lambda) := \mathbb{E} \left[ \exp \left( \int_r^t \int_{| y |
     \leqslant C} \lambda | y |^2 \pi (d s, d y) \right) \right] . \]
Then the derivatives of $\Phi$ satisfy
  \begin{equation}\label{eq:moment-generating}
	 \Phi^{(n)} (\lambda) = \Phi (\lambda) \sum_{\omega \in \mathbb{N}^n_0 : |
     \omega | = n} c (n, \omega) \prod_{i = 1}^n \left( (t - r) \int_{| y |
     \leqslant C} | y |^{2 i} e^{\lambda | y |^2} \mu (d y) \right)^{\omega_i}
  \end{equation}
  for suitable integers $c (n, \omega)$.
\end{lemma}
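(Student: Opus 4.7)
The plan is to compute $\Phi(\lambda)$ in closed form via Campbell's formula for Poisson random measures and then apply Faà di Bruno to differentiate $n$ times.

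First I would use the exponential formula for Poisson integrals. Since $\mu$ restricted to $\{|y|\leqslant C\}$ satisfies $\int_{|y|\leqslant C}|y|^{2}\mu(dy)<\infty$ (as $\mu(dy)\simeq |y|^{-1-\alpha}$ near zero and $\alpha<2$), and since for every $\lambda\in\R$ the function $e^{\lambda |y|^{2}}-1$ is bounded by a constant multiple of $|y|^{2}$ on $\{|y|\leqslant C\}$, the quantity
\[
g(\lambda):=(t-r)\int_{|y|\leqslant C}\bigl(e^{\lambda|y|^{2}}-1\bigr)\mu(dy)
\]
is finite. Campbell's formula (see e.g. \cite[Theorem 3.9]{peszat_zabczyk_2007}) then gives
\[
\Phi(\lambda)=\exp\bigl(g(\lambda)\bigr).
\]

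Next I would differentiate $g$. Dominated convergence (with dominating function $C^{2i}e^{|\lambda|C^{2}}\mathbf{1}_{|y|\leqslant C}\in L^{1}(\mu)$, valid on any bounded neighborhood of $\lambda$) justifies interchanging derivative and integral and yields
\[
g^{(i)}(\lambda)=(t-r)\int_{|y|\leqslant C}|y|^{2i}e^{\lambda|y|^{2}}\mu(dy),\qquad i\geqslant 1.
\]
Then Faà di Bruno's formula applied to the composition $\Phi=\exp\circ g$ gives
\[
\Phi^{(n)}(\lambda)=\Phi(\lambda)\sum_{\omega\in\N_{0}^{n}:\,|\omega|=n}c(n,\omega)\prod_{i=1}^{n}\bigl(g^{(i)}(\lambda)\bigr)^{\omega_{i}},
\]
where the combinatorial coefficients are $c(n,\omega)=\frac{n!}{\prod_{i=1}^{n}\omega_{i}!\,(i!)^{\omega_{i}}}$ (these are the Bell polynomial coefficients). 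Substituting the explicit expression for $g^{(i)}(\lambda)$ gives the claimed formula.

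The main subtlety (not really an obstacle) is justifying Campbell's formula and the differentiation under the integral sign; both work smoothly because we integrate over the bounded set $\{|y|\leqslant C\}$ on which $|y|^{2i}e^{\lambda|y|^{2}}$ is bounded and $\mu$ is $\sigma$-finite with finite second moment. Once the closed form $\Phi=e^{g}$ is in hand, the statement is purely a calculus identity for repeated derivatives of an exponential, which is exactly Faà di Bruno's formula; evaluating at $\lambda=0$ then recovers \cref{lem:moments}, since $e^{0\cdot|y|^{2}}=1$ and $\Phi(0)=1$.
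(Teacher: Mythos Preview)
Your proof is correct. Both your argument and the paper's begin identically, invoking Campbell's formula to obtain the closed form $\Phi(\lambda)=\exp(g(\lambda))$ with $g(\lambda)=(t-r)\int_{|y|\leqslant C}(e^{\lambda|y|^2}-1)\mu(dy)$. The difference lies in how the $n$-th derivative is then computed: the paper proceeds by induction on $n$, differentiating $\Phi^{(n)}$ once more via the product and Leibniz rules and tracking how each multi-index $\omega$ transforms into admissible $\tilde\omega$'s with $|\tilde\omega|=n+1$; you instead invoke Fa\`a di Bruno's formula directly for the composition $\exp\circ g$. Your route is shorter and yields the explicit coefficients $c(n,\omega)=n!/\prod_i \omega_i!(i!)^{\omega_i}$, whereas the paper's induction only establishes that \emph{some} integers $c(n,\omega)$ exist (which is all that is needed for \cref{lem:moments}). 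Conversely, the paper's argument is self-contained and avoids appealing to Fa\`a di Bruno as a black box.
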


\begin{proof}
  We prove this by induction. For $n=0$ the claim is obviously true, so we assume that it holds for $n$ and establish it also for $n+1$. We get with Campbell's formula (see~\cite[Section~3.2]{Kingman1993}):
  \[ \Phi (\lambda) = \exp \left( \int_r^t \int_{| y | \leqslant C}
     (e^{\lambda | y |^2} - 1) \mu (d y) d s \right) = \exp \left( (t - r)
     \int_{| y | \leqslant C} (e^{\lambda | y |^2} - 1) \mu (d y) \right), \]
  and therefore
  \begin{align*}
    \MoveEqLeft \Phi^{(n + 1)} (\lambda)= \partial_{\lambda} \Phi^{(n)} (\lambda)\\
    &  = \partial_{\lambda} \left(
    \Phi (\lambda) \sum_{\omega \in \mathbb{N}^n_0 : | \omega | = n} c (n,
    \omega) \prod_{i = 1}^n \left( (t - r) \int_{| y | \leqslant C} | y |^{2
    i} e^{\lambda | y |^2} \mu (d y) \right)^{\omega_i} \right)\\
    & = \Phi (\lambda) (t - r) \int_{| y | \leqslant C} | y |^{2 }
    e^{\lambda | y |^2} \mu (d y) \sum_{\omega \in \mathbb{N}^n_0 : | \omega |
    = n} c (n, \omega) \prod_{i = 1}^n \left( (t - r) \int_{| y | \leqslant C}
    | y |^{2 i} e^{\lambda | y |^2} \mu (d y) \right)^{\omega_i}\\
    & \quad + \Phi (\lambda) \sum_{\omega \in \mathbb{N}^n_0 : | \omega | = n} c
    (n, \omega) \partial_{\lambda} \left( \prod_{i = 1}^n \left( (t - r)
    \int_{| y | \leqslant C} | y |^{2 i} e^{\lambda | y |^2} \mu (d y)
    \right)^{\omega_i} \right) .
  \end{align*}
  The first term on the right hand side is of the claimed form with
  $\tilde{\omega} = (\omega_1 + 1, \omega_2, \ldots, \omega_n, 0) \in
  \mathbb{N}_0^{n + 1}$ such that $|\tilde \omega|=n+1$. For the second term on the right hand side we get by
  Leibniz's rule
  \begin{align*}
    \MoveEqLeft \partial_{\lambda} \left( \prod_{i = 1}^n \left( (t - r) \int_{| y |
    \leqslant C} | y |^{2 i} e^{\lambda | y |^2} \mu (d y) \right)^{\omega_i}
    \right)\\
    & = \sum_{j = 1}^n \prod_{i \neq j}^n \left( (t - r) \int_{| y |
    \leqslant C} | y |^{2 i} e^{\lambda | y |^2} \mu (d y) \right)^{\omega_i}
    \times \omega_j \left( (t - r) \int_{| y | \leqslant C} | y |^{2 j}
    e^{\lambda | y |^2} \mu (d y) \right)^{\omega_j - 1}\\
    & \hspace{40pt} \times (t - r) \int_{| y | \leqslant C} | y |^{2 (j + 1)}
    e^{\lambda | y |^2} \mu (d y)\\
    & = \sum_{j = 1}^{n + 1} \omega_j \prod_{i = 1}^{n + 1} \left( (t - r)
    \int_{| y | \leqslant C} | y |^{2 i} e^{\lambda | y |^2} \mu (d y)
    \right)^{\tilde{\omega}^j_i},
  \end{align*}
  with $\tilde{\omega}^j_i \in \N_0^{n+1}$ defined by
  \[ \tilde{\omega}^j_i = \left\{ \begin{array}{ll}
       \omega_i, & i \neq j, j + 1,\\
       \omega_j - 1, & i = j,\\
       \omega_{j + 1} + 1, & i = j + 1.
     \end{array} \right. \]
  As required we have $| \tilde{\omega}^j | = | \omega | - j + (j + 1) = | \omega | + 1 = n + 1$, and thus the proof is complete.
\end{proof}

\end{subsection}

\end{section}

\addcontentsline{toc}{chapter}{References}

\end{document}